\documentclass[a4paper, 11pt]{article}
\usepackage[utf8]{inputenc}	
\usepackage[english]{babel} 

\usepackage{mathtools}
\usepackage{amsmath}
\usepackage{dsfont}
\usepackage{amsthm}
\usepackage{amssymb}
\usepackage{xcolor}
\usepackage{multirow}
\usepackage{bm}
\usepackage{lineno}
\usepackage[pdftex,pdfborder={0 0 0}, 
colorlinks=true, 
linkcolor=blue, 
citecolor=red, 
pagebackref=true, 
]{hyperref}

\usepackage{graphicx}
\usepackage{subcaption}

\usepackage{algorithm,algorithmic}
\usepackage{lineno}

\theoremstyle{plain}
\newtheorem{theorem}{Theorem}

\newtheorem{lemma}[theorem]{Lemma}
\newtheorem{corollary}[theorem]{Corollary}
\newtheorem{remark}[theorem]{Remark}

\newcommand{\R}{\mathbb{R}}

\def\<{{\langle}}
\def\>{{\rangle}}
\def\dd{{\rm d}}

\def\1Lip{1\text{-Lip}}
\def\l({\left(}
\def\r){\right)}

\def\dd{\mathrm{d}}

\DeclareMathOperator*{\argmin}{arg\,min}

\usepackage{hyperref}
\usepackage{vmargin}
\setmarginsrb{3.5cm}{3cm}{3.5cm}{2cm}{0cm}{0cm}{0cm}{1.5cm}
\allowdisplaybreaks[3]


\begin{document}


\title{Generalized conditional gradient and learning in potential mean field games\footnote{This work was supported by a public grant as part of the
Investissement d'avenir project, reference ANR-11-LABX-0056-LMH,
LabEx LMH, and by the FIME Lab (Laboratoire de Finance des Marchés de l'Energie), Paris.} }

\author{J.~Frédéric Bonnans\footnote{Inria and Laboratoire des Signaux et Systèmes, CNRS (UMR 8506), CentraleSupélec, Université Paris-Saclay, 91190 Gif-sur-Yvette, France. 
E-mails:
\href{mailto:frederic.bonnans@inria.fr}{frederic.bonnans@inria.fr},
\href{mailto:pierre.lavigne@inria.fr}{pierre.lavigne@inria.fr},
\href{mailto:laurent.pfeiffer@inria.fr}{laurent.pfeiffer@inria.fr}.
}
\and
Pierre Lavigne\textsuperscript{$\dagger$}\footnote{
CMAP UMR 7641, Ecole Polytechnique, route de Saclay, 91128, Palaiseau Cedex, Institut Polytechnique de Paris, France.}
\and
Laurent Pfeiffer\textsuperscript{$\dagger$}
}


\date{\today}

\maketitle

\begin{abstract}
We apply the generalized conditional gradient algorithm to potential mean field games and we show its well-posedeness. It turns out that this method can be interpreted as a learning method called fictitious play.
More precisely, each step of the generalized conditional gradient method amounts to compute the best-response of the representative agent, for a predicted value of the coupling terms of the game.
We show that for the learning sequence $\delta_k = 2/(k+2)$, the potential cost converges in $O(1/k)$, the exploitability and the variables of the problem (distribution, congestion, price, value function and control terms) converge in $O(1/\sqrt{k})$, for specific norms.
\end{abstract}

\paragraph{Key-words:} mean field games, generalized conditional gradient, fictitious play, learning, exploitability.

\paragraph{AMS classification:} 90C52, 91A16, 91A26, 91B06, 49K20, 35F21, 35Q91.

\section{Introduction}

Mean field games were introduced by J.-M.~Lasry and P.-L.~Lions in \cite{LL06cr1,LL06cr2,LL07mf} and M.~Huang, R.~Malham\'e, and P.~Caines in \cite{HCMieeeAC06}, to study interactions among a large population of players. Mean field games have found various applications such has
epidemic control \cite{doncel2020mean,elie2020contact}, electricity management \cite{alasseur2020extended,couillet2012electrical}, finance and banking \cite{cardaliaguet2016mean,carmona2017mean,carmona2015probabilistic,feron2020price,lachapelle2016efficiency}, social network \cite{bauso2016opinion}, economics \cite{achdou2014partial,gueant2011mean}, crowd motion \cite{lachapelle2011mean}.
In these models, the nature of the interactions can be of two kinds. Interactions through the density $m$ of players, which appear typically in epidemic or crowd motion models, will be modeled in the following by a congestion function denoted $f$.
Interactions through the controls $v$, which rather appear in economics, finance or energy management models, will be modeled by a price function denoted $\phi$.

\paragraph{Framework} In this article, we study the generalized conditional gradient algorithm to solve potential mean field game problems. We consider the continuous and finite time framework formulated in \cite{BHP-schauder}, consisting of a Hamilton-Jacobi-Bellman equation, a Fokker-Planck equation, and other coupling equations.
We show that the generalized conditional gradient method can be interpreted as a learning procedure called fictitious play. This perspective allows us to:
\begin{enumerate}
\item borrow and apply classical tools from the conditional gradient theory and derive, under suitable assumptions, convergence rates for  the potential cost, the different variables generated by the fictitious play algorithm, and the exploitability;
\item show that the notion of exploitability from game theory is equivalent to the notion of primal-dual gap defined (as defined in Section \ref{sec:gcg}).
\end{enumerate}

\paragraph{Potential mean field games} We say that a mean field game has a convex potential formulation if the congestion and price mappings $f$ and $\phi$ derive from convex potentials $F$ and $\Phi$.
In the mean field game literature, potential (or variational) mean field games were first considered in \cite{LL06cr2}.  This class of games has been widely investigated, we refer the reader to  \cite{benamou2017variational,cardaliaguet2015second,cardaliaguet2016first,meszaros2015variational,prosinski2017global} 
for congestion interactions and \cite{BHP-schauder,graber2015existence,graber2020nonlocal,graber2020mean,graber2018variational,graber2020weak} for price interactions. A key interest of potential mean field games is that the mean field game system stands as sufficient first order conditions for the potential control problem. This is of particular interest for numerical resolution: in such a case one expects classical optimization algorithms to be applicable.

\paragraph{Algorithms} 
The numerical resolution of mean field games has been widely studied, see
\cite{achdou2020mean} for a survey. Primal-dual methods  \cite{bonnans2021discrete,briceno2019implementation,briceno2018proximal} fully use the primal-dual structure of the potential problem.
The augmented Lagrangian algorithm \cite{Benamou2015,benamou2017variational,bonnans2021discrete} is a primal method based on successive minimization of the primal variable and gradient ascent step of dual variables.
Other methods have been investigated such as the Sinkhorn algorithm \cite{benamou2019entropy} or the Mirror Descent algorithm \cite{hadikhanloo2017learning,perolat2021scaling}.

Let us emphasize that most of the above references deal with interaction terms depending on the distribution of the states of the agents; few publications are concerned with interactions through the controls (see \cite{kobeissi2020meanfinite,bonnans2021discrete}). 

\paragraph{Generalized conditional gradient} The generalized conditional gradient algorithm is a variant of the conditional gradient algorithm, also called Frank-Wolfe algorithm, first developed in  \cite{frank1956algorithm}. The conditional gradient method is designed to minimize a convex objective function on a convex and compact set. The idea is to linearize the objective function at each iteration $k \in \mathbb{N}$, at a given point $\bar{x}_k$, and to find a minimizer $x_k$ of this linearized problem. Then a new point $\bar{x}_{k+1} = (1- \delta_k) \bar{x}_k + \delta_k x_k$ is computed for some step size $\delta_k \in [0,1]$. As we will see later, the step size $\delta_k$ can be interpreted as a learning rate for games. A classical choice of step size is given by $\delta_k = 2/(k+2)$ (see \cite{dunn1978conditional,jaggi2013revisiting}) which yields the convergence of the objective function in $O(1/k)$. For a recent description of the conditional gradient algorithm, we refer to \cite[Chapter 1]{kerdreux2020accelerating}. In our study we consider the generalized conditional gradient algorithm (first studied in \cite{bredies2009generalized}), which is based on a semi-linearization of the objective function instead of a full linearization. An interesting feature of this method is that most of the existing convergence results obtained for the conditional gradient remain true for the generalized conditional gradient method. We refer to  \cite{rakotomamonjy2015generalized} for a study. We mention that the previous references deal with finite dimensional problems but these algorithms have been also investigated in infinite dimensional setting, see \cite{bredies2009generalized,pieper2019linear,xu2017convergence} respectively for studies in Hilbert, measures and Banach spaces.

\paragraph{Learning and exploitability} Since most models in social science or engineering rely on Nash equilibria, one can wonder whether such equilibria can be reached if all agents follow their personal interests. Learning is thus a central question in game theory \cite{fudenberg1998theory}. Fictitious play is a best response iterative method for solving games, introduced in \cite{brown1951iterative,robinson1951iterative}. The idea is the following: at each step of the algorithm, for a given belief on the strategy of the others, find the best response of the players; then learn by averaging all the best responses found from the beginning of the learning procedure.
An application of the fictitious play to potential games can be found in \cite{monderer1996potential}.
The fictitious play has been investigated in  \cite{cardaliaguet2017learning,elie2019approximate,HADIKHANLOO2019369,perrin2020fictitious}.
The convergence results for learning methods can be of various forms. In potential games, one can study the convergence of the potential cost along a sequence generated by the fictitious play algorithm. 
In general, one can consider the exploitability of the game at each iteration and try to show its convergence to zero. Given a player and a belief on the others behaviors, the exploitability is the expected relative reward that the player can get by choosing a best response.
This notion has recently received a growing attention \cite{cui2021approximately,delarue2021exploration,geist2021concave,perolat2021scaling,perrin2021mean,perrin2020fictitious}. The convergence of the exploitability has been addressed in \cite{perrin2020fictitious} in the context of continuous time learning and discrete mean field games, and a convergence rate is provided.

\paragraph{Link between the generalized conditional gradient and fictitious play}
A key message of this article is that, in the context of continuous potential mean field games, the generalized conditional gradient algorithm can be interpreted as a fictitious play method.
It relies on the following fact: at each step of the method, the problem to be solved (arising from a semi-linearization of the potential problem) coincides with the individual control problem of the agents, for a given belief of the coupling terms.
The update formula $\bar{x}_{k+1} = (1- \delta_k) \bar{x}_k + \delta_k x_k$ corresponds to the learning step in the fictitious play algorithm, where the agents update their belief by averaging the past and the new distributions of states and controls.

This interpretation has already been highlighted in a very recent work \cite{geist2021concave}, for a class of potential mean field games with some discrete structure. To the best of our knowledge, no other contribution in the literature has investigated the conditional gradient method for mean field games and has pointed out this interpretation.
A minor difference between our framework and the one of \cite{geist2021concave} is the linearity of the running cost of the agents, so that they can apply the classical conditional gradient algorithm (and do not need to rely on semi-linearizations of the potential cost). In our PDE setting, we must employ the standard change of variable ``à la Benamou-Brenier" and the perspective function of the running cost to get a convex potential problem. It turns out that in order to get an interpretation of the method as a learning method, the contribution of the perspective function (in the potential cost) must not be linearized, whence the use of the generalized conditional gradient algorithm.



\paragraph{Contributions}


Our contributions concern the well-posedness of the generalized conditional gradient algorithm and its convergence to the solution of the problem. The well-posedness is established with the help of suitable regularity estimates for the Hamilton-Jacobi-Bellman equation and the Fokker-Planck equation.

Similarly to \cite{geist2021concave}, we use the standard convergence results of the conditional gradient method to prove that the potential cost converges at a rate $O(1/k)$ and the exploitability at a rate $O(1/\sqrt{k})$, when $\delta_k= 2/(k+2)$.

In comparison with \cite{geist2021concave}, the main novelty of our work (besides the different analytical framework) is the proof of convergence of all variables of the game: the coupling terms (price and congestion), the distribution of the agents, and their value function, at a rate $O(1/\sqrt{k})$. A key tool for the proof of convergence is a kind of quadratic growth property satisfied by the potential cost, which itself follows from the (assumed) strong convexity of the running cost of the agents.

Let us mention that we also provide convergence rates for the case $\delta_k= 1/(k+1)$ which is more standard in the fictitious play algorithm: $O(\ln(k)/k)$ for the potential cost, $O(\sqrt{\ln(k)/k})$ for the exploitability and the different variables of the game.


\paragraph{Plan of the paper}

In Section \ref{sec:data} we provide our framework, the mean field game system we are interested in, and give our main assumptions.
In Section \ref{sec:stochastic-pb} we study a stochastic individual control problem. We derive the Hamilton-Jacobi-Bellman equation associated with the value function of the control problem, and provide some regularity results.
We link this problem with a partial differential equation (PDE) control problem of a Fokker-Planck equation and show existence of a (regular) optimal policy.
In Section \ref{section:MFG} we explicit the potential problem under study. We derive uniqueness results for the potential and the individual control problem.
In Section \ref{sec:gcg} we recall the generalized conditional gradient algorithm and apply it to our context. We show that the algorithm is well-defined. We define the exploitability and show the equality with the primal-dual gap. At the end of the section we exhibit the link with the fictitious play learning method.
Finally, in Section \ref{sec:convergence}, we provide our convergence results.

\section{Data and main assumptions \label{sec:data}}

\subsection{Notations}

We fix $T >0$ the duration of the game and $d,k \in \mathbb{N}^\star$ two dimensional coefficients.

\paragraph{Sets}

We set $Q =  \mathbb{T}^d \times  [0,T]$. Given a metric space $X$, we denote by $X^\star$ its dual. 
For any $\alpha \in (0,1)$, we denote by $\mathcal{C}^\alpha(Q)$ the set of H{\"o}lder continuous mappings on $Q$ of exponent $\alpha$ and by $\mathcal{C}^{2+\alpha,1+\alpha/2}(Q)$ the set of continuous mappings $u$ with H{\"o}lder continuous derivatives $\partial_t u$, $\nabla u$ and $D_{xx}^2 u$ on $Q$ of exponent $\alpha$. We also denote by $\mathcal{C}^{1+ \alpha,\alpha}(Q;\R^d)$ the set of all $v \in \mathcal{C}^{\alpha}(Q;\R^d)$ with $D_x v \in \mathcal{C}^{\alpha}(Q,\R^{d \times d})$.

 Sobolev spaces are denoted by $W^{n,q}(Q)$, the order of derivation $n$ being possibly non-integral (following the definition in \cite[section II.2]{LSU}). We set
\begin{equation} \nonumber
W^{2,1,q}(Q) = W^{1,q}(Q) \cap L^q(0,T;W^{2,q}(\mathbb{T}^d)), \quad W^{1,0,q}(Q) = L^q(0,T;W^{1,q}(\mathbb{T}^d)).
\end{equation}
We define
\begin{equation} \nonumber
\mathcal{D}_1(\mathbb{T}^d) = \left\{ m \in L^\infty(\mathbb{T}^d), \; m \geq 0 ,\; \int_{\mathbb{T}^d} m(x) \dd x = 1 \right\}.
\end{equation}
We fix a real number $p$ such that $p > d + 2$.

\paragraph{Nemytskii notations}

For any mappings $g\colon Q \times \mathbb{R}^d \to \mathbb{R}^d $ and $u \colon Q \to \mathbb{R}^d$, we define $\bm{g}[u] \colon Q \to \mathbb{R}$,
\begin{equation} \nonumber
\bm{g}[u](x,t) = g(x,t,u(x,t))
\end{equation}
called Nemytskii operator. This notation will mainly be used for the Hamiltonian $H$. Note that $\bm{H}_p$ will denote the Nemytskii operator associated with the partial derivative of $H$ with respect to $p$ (a similar notation will be used for the other partial derivatives).

\paragraph{Data of the problem}
We fix an initial distribution and a terminal cost
\begin{equation} \nonumber
m_0 \in \mathcal{D}_1(\mathbb{T}^d), \qquad g \colon \mathbb{T}^d \rightarrow \mathbb{R}, 
\end{equation}
and four maps: a running cost $L$, a  congestion cost $f$, a vector of  price $\phi$ and an aggregation term $a$, 
\begin{equation} \nonumber
\begin{array}{rl}
L\colon & Q \times \mathbb{R}^d \rightarrow \R, \\
f \colon & Q \times \mathcal{D}_1(\mathbb{T}^d)\to \R, \\
\phi \colon & [0,T] \times \mathbb{R}^k \rightarrow \R^d, \\
a \colon & Q \to \mathbb{R}^{k \times d}.
\end{array}
\end{equation}
We assume that $L$ is strongly convex, more precisely, we assume that there exists a constant $C_0>0$ such that
for any $v, v' \in \mathbb{R}^d$ and for any $(x,t) \in Q$, we have
\begin{equation}  \label{eq:grad_monotony} \tag{A1}
\langle L_v(x,t,v)-  L_v(x,t,v'), v - v' \rangle \geq \frac{1}{C_0} |v - v' |.
\end{equation}
For any $(x,t,p) \in Q \times \mathbb{R}^d$, we define the Hamiltonian $H$,
\begin{equation} \nonumber
H(x,t,p) = \sup_{v \in \mathbb{R}^d} \;  - \langle p, v \rangle - L(x,t,v).
\end{equation}
The strong convexity assumption on $L$ ensures that $H$ takes finite values and is continuously differentiable (more regularity properties on $H$ are collected in Appendix \ref{appendix:reg-H}). We define the perspective function $\tilde{L}\colon  Q \times \mathbb{R} \times \mathbb{R}^d \rightarrow \R $,
\begin{equation} \label{def:perspective-function}
\tilde{L}(x,t,m,w) = 
\begin{cases}
\begin{array}{ll}
m L \left( x,t,
\frac{w}{m}\right), & \text{if $m >0$}, \\
0, & \text{if $m=0$ and $w=0,$} \\
+ \infty, & \text{otherwise.}
\end{array}
\end{cases}
\end{equation}
Note that $\tilde{L}$ is convex and lower semi-continuous with respect to $(m,w)$.
We define $A : L^1(Q;\mathbb{R}^d) \to L^1(0,T;\mathbb{R}^k)$ and $A^\star : L^1(0,T;\mathbb{R}^k) \to L^1(Q;\mathbb{R}^d)$ as follows,
\begin{equation}  \nonumber
A[w](t) =\int_{\mathbb{T}^d} a(x,t) w(x,t) \dd x, \qquad  \quad A^\star[P](x,t) = a^\star(x,t) P(t),
\end{equation}
for any $(x,t) \in Q$.

\subsection{Coupled system and assumptions \label{subsec:coupled_system}} 

The mean field game system under study is the following,
\begin{equation} \label{eq:MFG-gcg} \tag{MFG}
\begin{cases}
\begin{array}{clc}
\text{(i)} &
\begin{cases}
- \partial_t u - \Delta u + \bm{H}[  \nabla u + A^\star P] = \gamma, \\
 u(x,T) =  g(x),
\end{cases} & \begin{array}{r}
(x,t) \in Q, \\
x\in \mathbb{T}^d,
\end{array}
\\[1.5em]
\text{(ii)} & v  = -  \bm{H}_p[  \nabla u +  A^\star P], & (x,t) \in Q,
\\[1em]
\text{(iii)} & \begin{cases}
\partial_t m - \Delta m + \nabla \cdot (v m) = 0,  \\
m(0,x) = m_0(x), 
\end{cases} &  \begin{array}{r}
(x,t) \in Q, \\
x\in \mathbb{T}^d,
\end{array} \\[1.5em]
\text{(iv)} & \gamma(x,t) = f(x,t,m(t)), & (x,t) \in Q, \\[1em]
\text{(v)} & P(t) = \bm{\phi}[A[v m]](t), & t \in [0,T],
\end{array}
\end{cases}
\end{equation}
where the unknown is $(m,v,u,\gamma,P)$ with $m(x,t) \in \mathbb{R}$, $v(x,t) \in \mathbb{R}^d$, $u(x,t) \in \mathbb{R}$, $\gamma(x,t) \in \mathbb{R}$, and $P(t) \in \mathbb{R}^k$, for any $(x,t) \in Q$. The equation (\ref{eq:MFG-gcg},i) is a Hamilton-Jacobi-Bellman equation and describes the evolution of the value function as time goes backward. Equation (\ref{eq:MFG-gcg},ii) defines the optimal control $v$, which is given by the gradient $H_p$ of the Hamiltonian. Equation (\ref{eq:MFG-gcg},iii) is a Fokker-Planck equation, describing the evolution of the state distribution of the agents. Equation (\ref{eq:MFG-gcg},iv) defines the congestion $\gamma$ and equation (\ref{eq:MFG-gcg},v) the price $P$.

\paragraph{Regularity assumptions}

We assume that $\bm{L}_v$ is differentiable with respect to $x$ and $v$ and that $a$ is differentiable with respect to $x$.
All along the article, we make use of the following assumptions. 

\paragraph{Growth assumptions}

There exists $C_0>0$ such that for all $(x,t) \in Q$, $y \in \mathbb{T}^d$, $v \in \R^d$, $z \in \R^k$, and $m \in \mathcal{D}_1(\mathbb{T}^d)$,
\begin{align*}
&  L(x,t,v) \leq C_0 |v|^2 + C_0, \label{ass_L_quad_growth2} \tag{A2} \\
&  |L(x,t,v)-L(y,t,v)| \leq C_0 |x-y| (1 + |v|^2), \label{ass_L_Lipschitz} \tag{A3} \\
&  |\phi(t,z)| \leq C_0, \label{ass_psi_bounded} \tag{A4} \\
&  |f(x,t,m)| \leq C_0. \label{ass_f_bounded} \tag{A5}
\end{align*}

\paragraph{H{\"o}lder and Lipschitz continuity assumptions}

For all $R>0$, there exists $\alpha_0 \in (0,1)$ such that
\begin{align*}
\begin{cases}
\begin{array}{rl}
L \in & \mathcal{C}^{\alpha_0}(B_R), \\
L_v \in & \mathcal{C}^{\alpha_0}(B_R, \R^d), \\
L_{vx} \in & \mathcal{C}^{\alpha_0}(B_R, \R^{d \times d}), \\
L_{vv} \in &  \mathcal{C}^{\alpha_0}(B_R, \R^{d \times d}),
\end{array}
\end{cases}
\quad
\begin{cases}
\begin{array}{rl}
\phi \in & \mathcal{C}^{\alpha_0}(B_R',\R^d), \\
a \in & \mathcal{C}^{\alpha_0}(Q,\R^{k \times d}), \\
  D_x a \in & \mathcal{C}^{\alpha_0}(Q,\R^{k \times d \times d}),
\end{array}
\end{cases} \qquad \label{ass_holder} \tag{A6}
\end{align*}
where $B_R= Q \times B(\R^d,R)$ and $B_R'= [0,T] \times B(\R^k,R)$. There exists $\alpha_0 \in (0,1)$ and $C_0>0$ such that
\begin{align*}
| f(x_2,t_2,m_2) - f(x_1,t_1,m_1) |
 \leq C_0 \left( |x_2 - x_1| + |t_2 - t_1|^{\alpha_0} + \| m_2 - m_1 \|_{L^{2}(\mathbb{T}^d)} \right), \label{ass_hold_f} \tag{A7} 
\end{align*}
for all $(x_1,t_1)$ and $(x_2,t_2) \in Q$ and for all $m_1$ and $m_2 \in \mathcal{D}_1(\mathbb{T}^d)$. 
We further assume that $\phi$ is Lipschitz continuous with respect to its second variable,
\begin{align}
| \phi(t,z_2) - \phi(t,z_1)| & \leq C_0 | z_2 - z_1 |, \label{ass_Lip_phi} \tag{A8} 
\end{align}
for all $(x,t) \in Q$, for all $z_1$ and $z_2 \in \mathbb{R}^k$.

\begin{remark}
Note that compared to the framework of \cite{BHP-schauder} the Assumptions \eqref{ass_psi_bounded} and \eqref{ass_hold_f} are strengthened. Indeed, we require here more regularity: on $f$ with respect to its third variable; on $\phi$ with respect to its second variable.
\end{remark}

\paragraph{Boundary conditions and convention on constants}

We assume that there exists $ \varepsilon_0>0$ such that
 $m_0(x) \geq \varepsilon_0$ for any $x\in \mathbb{T}^d$. There exists $\alpha_0 \in (0,1)$ such that 
\begin{equation}
m_0 \in \mathcal{C}^{2+ \alpha_0}(\mathbb{T}^d), \quad g \in \mathcal{C}^{2 +\alpha_0}(\mathbb{T}^d).
\label{ass_init_cond} \tag{A9}
\end{equation}

All along the article, we make use of two generic constants $C>0$ and $\alpha \in (0,1)$. The value of $C$ may increase from an inequality to the next one; the value of $\alpha$ may decrease. The constants depend on the data of the problem introduced above.

\subsection{Potentials}

\paragraph{Congestion}

We assume that $f$ is monotone, that is to say,
\begin{equation} \nonumber
\int_{\mathbb{T}^d}(f(x,t,m_2) -  f(x,t,m_1))(m_2(x)-m_1(x)) \dd x \geq 0,
\end{equation}
for any $m_1$ and $m_2 \in \mathcal{D}_1(\mathbb{T}^d)$ and for any $t \in [0,T]$.
We assume that $f$ has a primitive, that is, we assume the existence of a map $F \colon [0,T] \times \mathcal{D}_1(\mathbb{T}^d)$ such that
\begin{equation} \label{primitive:F}
F(t,m_2) - F(t,m_1) =\int_0^1 \int_{\mathbb{T}^d} f(x,t, s m_2 + (1-s)m_1)( m_2(x) - m_1(x)) \dd x \dd s.
\end{equation}
The monotonicity assumption implies that
\begin{equation} \nonumber
F(t,m_2) \geq F(t,m_1) + \int_{\mathbb{T}^d} f(x,t,m_1)(m_2(x)-m_1(x)) \dd x. 
\end{equation}
Since this inequality holds for any $m_1 \in \mathcal{D}_1(\mathbb{T}^d)$, $F$ is convex with respect to its second variable as the supremum of affine functions.

\paragraph{Price}

We assume that $\phi$ has a convex potential $\Phi$, that is to say there exists a measurable mapping $\Phi : [0,T] \times \mathbb{R}^k \to \mathbb{R}$, convex with respect to its second variable and such that $\phi(t,z) = \nabla_z \Phi(t,z)$ for any $(t,z) \in  [0,T] \times \mathbb{R}^k$.

\section{Estimates for the individual control problem} \label{sec:stochastic-pb}

In this section we establish regularity results on the  variables $u$, $v$, and $m$, when obtained by solving the equations (\ref{eq:MFG-gcg},i-iii),
for fixed congestion and price.
We investigate the stochastic optimal control problem associated with the HJB equation (\ref{eq:MFG-gcg},i).
In the section we fix $\beta \in (0,1)$ and we consider
\begin{equation} \label{eq:setUbeta}
\mathcal{U}^\beta = \mathcal{C}^{1,\beta}(Q) \times \mathcal{C}^\beta(0,T;\mathbb{R}^k).
\end{equation}
We also fix a pair $(\gamma,P)\in \mathcal{U}^\beta$ and a constant $R>0$ such that
\begin{equation} \label{ineq:assumption-lemma-gamma-P}
\|\gamma\|_{L^\infty(Q)} + \|\nabla \gamma\|_{L^\infty(Q;\mathbb{R}^d)} + \| P \|_{L^\infty(0,T;\mathbb{R}^k)} \leq R.
\end{equation}

\subsection{The individual problem as a stochastic optimal control problem} \label{sec:individual-control-pb}

Let $(B_s)_{s \in [0,T]}$ denote a Brownian motion and let $Y$ be a random variable, independent of $(B_s)_{s \in [0,T]}$, with probability distribution $m_0$.
Let $\mathbb{F}$ denote the filtration generated by the Brownian motion $(B_s)_{s \in [0,T]}$ and the initial random variable $Y$.
We denote by $L_{\mathbb{F}}^2(t,T;\mathbb{R}^d)$ (resp.\@ $L_{\mathbb{F}}^{2,K}(t,T;\mathbb{R}^d)$, for some constant $K>0$) the set of progressively measurable stochastic processes $\nu$ on $[t,T]$ with value in $\mathbb{R}^d$ such that $\mathbb{E} \left[ \int_t^T |\nu_s|^2 \dd s \right] < +\infty$ (resp.\@ $\mathbb{E} \left[ \int_t^T |\nu_s|^2 \dd s \right] \leq K$).
For all $\nu \in L_{\mathbb{F}}^2(t,T;\mathbb{R}^d)$, we denote by $(X^{\nu}_{s})_{s \in [0,T]}$ the solution to the stochastic differential equation 
\begin{equation} \nonumber
\dd X_s = \nu_s \dd s + \sqrt{2} \dd B_s, \quad X_0 = Y.
\end{equation}
We define the individual cost $Z_{\gamma,P} \colon L_{\mathbb{F}}^2(0,T;\mathbb{R}^d) \to \mathbb{R}$,
\begin{equation} \label{pb:individual-cost}
 Z_{\gamma,P}(\nu) =  \mathbb{E} \left[ \int_0^T L(X_s^\nu,s,\nu_s) + \langle A^\star [P](X_s^\nu,s) , \nu_s \rangle + \gamma(X_s^\nu,s) \dd s + g(X_T^\nu) \right].
\end{equation}
We consider the following stochastic individual control problem
\begin{equation} \label{pb:individual-stochastic-control-problem}  \tag{P$_{\gamma,P}$}
\inf_{\nu \in  L_{\mathbb{F}}^2(0,T;\mathbb{R}^d)} Z_{\gamma,P}(\nu).
\end{equation}
 This problem will play an important role in the following, in particular in learning procedures: at each step, a representative player assumes the behavior of the others to be given and solves \eqref{pb:individual-stochastic-control-problem}.

We define the mapping $J_{\gamma,P} \colon Q \times  L_{\mathbb{F}}^2(0,T;\mathbb{R}^d) \to \mathbb{R}$,
\begin{equation} \nonumber
J_{\gamma,P}(x,t,\nu) =  \mathbb{E} \left[\int_t^T  L(X_s,s,\nu_s) + \langle A^\star [P](X_s,s) , \nu_s \rangle + \gamma(X_s,s) \dd s + g(X_T) \right],
\end{equation}
where $(X_{s})_{s \in [t,T]}$ is the solution to
\begin{equation*}
\dd X_s = \nu_s \dd s + \sqrt{2} \dd B_s, \quad X_t= x.
\end{equation*}
We define by $\mathsf{u}[\gamma,P] \colon Q \to \mathbb{R}$ the value function associated with the individual control problem \eqref{pb:individual-stochastic-control-problem},
\begin{equation} \label{eq:value function}
\mathsf{u}[\gamma,P](x,t) = \inf_{\nu \in L_{\mathbb{F}}^2(t,T;\mathbb{R}^d)} J_{\gamma,P}(x,t,\nu).
\end{equation}

\begin{lemma} \label{lem:u-L-infty}
Let $u= \mathsf{u}[\gamma,P]$.
There exists a constant $C>0$, only depending on $R$, such that
\begin{equation*}
u(x,t) = \inf_{\nu \in L_{\mathbb{F}}^{2,C}(t,T;\mathbb{R}^d)} J_{\gamma,P}(x,t,\nu)
\end{equation*}
for a.e.\@ $(x,t) \in Q$, i.e.\@ the optimization set in \eqref{eq:value function} can be restricted to $L_{\mathbb{F}}^{2,C}(t,T;\mathbb{R}^d)$ (the set is defined in the beginning of section \ref{sec:individual-control-pb}).
\end{lemma}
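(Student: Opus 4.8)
The plan is to show that any control whose cost is close to the optimal value must have a uniformly bounded $L^2$-norm, so that deleting from the feasible set the controls with large norm leaves the infimum unchanged. The argument combines a coercive (quadratic) lower bound for the running cost $L$ with the uniform bounds on the data provided by \eqref{ineq:assumption-lemma-gamma-P} and by the growth and regularity assumptions \eqref{ass_L_quad_growth2}, \eqref{ass_holder}, \eqref{ass_init_cond}. All constants produced below will depend on the fixed data and on $(\gamma,P)$ only through the bound $R$.

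First I would extract a quadratic coercivity estimate for $L$ from the strong convexity assumption \eqref{eq:grad_monotony}. Fixing $(x,t)$ and integrating the strong monotonicity of $L_v$ along the segment joining $0$ to $v$ yields
\begin{equation*}
L(x,t,v) \geq L(x,t,0) + \langle L_v(x,t,0), v\rangle + \tfrac{1}{2C_0}|v|^2 \qquad \text{for all } v \in \mathbb{R}^d.
\end{equation*}
Since $Q$ is compact and $L$, $L_v$ are continuous by \eqref{ass_holder}, the quantities $L(x,t,0)$ and $L_v(x,t,0)$ are bounded uniformly in $(x,t)$, whence, after absorbing the linear term,
\begin{equation*}
L(x,t,v) \geq \tfrac{1}{4C_0}|v|^2 - C \qquad \text{for all } (x,t) \in Q, \ v \in \mathbb{R}^d.
\end{equation*}

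Second, I would obtain an a priori upper bound on the value function by testing the individual cost with the null control $\nu \equiv 0$. Along the resulting trajectory, $L(X_s,s,0)$ is bounded, the congestion satisfies $|\gamma(X_s,s)| \leq R$ by \eqref{ineq:assumption-lemma-gamma-P}, and $g$ is bounded since $g \in \mathcal{C}^{2+\alpha_0}(\mathbb{T}^d)$; hence $\mathsf{u}[\gamma,P](x,t) \leq J_{\gamma,P}(x,t,0) \leq C$. Third, I would bound $J_{\gamma,P}(x,t,\nu)$ from below in terms of $\mathbb{E}\bigl[\int_t^T |\nu_s|^2\,\mathrm{d}s\bigr]$. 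Using the coercivity of $L$, the estimate $|A^\star[P](X_s,s)| = |a^\star(X_s,s)P(s)| \leq CR$ (as $a$ is bounded and $\|P\|_{L^\infty} \leq R$), Young's inequality to absorb the cross term $\langle A^\star[P],\nu_s\rangle$ into the quadratic part, and the lower bounds $\gamma \geq -R$ and $g \geq -\|g\|_{\infty}$, I arrive at
\begin{equation*}
J_{\gamma,P}(x,t,\nu) \geq \tfrac{1}{8C_0}\,\mathbb{E}\!\left[\int_t^T |\nu_s|^2\,\mathrm{d}s\right] - C.
\end{equation*}
Comparing this with the upper bound of the previous step, every control satisfying $J_{\gamma,P}(x,t,\nu) \leq \mathsf{u}[\gamma,P](x,t) + 1$ obeys $\mathbb{E}\bigl[\int_t^T |\nu_s|^2\,\mathrm{d}s\bigr] \leq C$ with $C$ independent of $(x,t)$; since such controls approximate the infimum arbitrarily well, the optimization set may be restricted to $L_{\mathbb{F}}^{2,C}(t,T;\mathbb{R}^d)$.

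I expect the only delicate point to be the treatment of the cross term: the coercivity constant inherited from $L$ must be large enough to absorb, after Young's inequality, the linear-in-$\nu$ contribution of the price $A^\star[P]$, and one must verify that every constant arising depends on $(\gamma,P)$ only through $R$. This uniformity is exactly what guarantees that the threshold $C$ is independent of $(x,t)$, so that the claimed restriction of the feasible set holds for a.e.\@ $(x,t) \in Q$.
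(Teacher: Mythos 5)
Your proposal is correct and follows essentially the same route as the paper's proof: derive quadratic coercivity of $L$ from the strong convexity and the boundedness of $L(x,t,0)$, $L_v(x,t,0)$, absorb the price cross term via Young's inequality, bound the value function from above, and conclude that near-optimal controls are uniformly bounded in $L^2$. The only difference is cosmetic: you make explicit the upper bound $\mathsf{u}[\gamma,P](x,t)\leq C$ obtained by testing with the null control, which the paper uses implicitly.
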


\begin{proof}
We first derive a lower bound of $L$.
By assumption \eqref{ass_holder},
$L(x,t,0)$ and $L_v(x,t,0)$ are bounded. It follows then from the
strong convexity assumption \eqref{eq:grad_monotony}
that there exists a constant $C> 0$ such that
\begin{equation} \label{L_quad_growth1}
  \frac{1}{C} |\nu|^2 - C \leq L(x,t,\nu), \quad
  \text{ for all }(x,t,\nu) \in Q \times \R^d.
\end{equation}
Then, for any $(x,s) \in Q$ and for any $\nu \in \mathbb{R}^d$, we have the following estimates:
\begin{align} \nonumber
L(x,s,\nu) + \langle A^\star [P](x,s) , \nu \rangle & \geq  \frac{1}{C} |\nu|^2 - \|a\|_{L^\infty(Q;\mathbb{R}^{k \times d})} |P(s)| |\nu| - C \\
& \geq \frac{1}{C} (|\nu|^2 - |P(s)|^2 - 1) \geq \frac{1}{C} (|\nu|^2 - 1). \nonumber
\end{align}
Let $t\in [0,T]$, let $\varepsilon \in (0,1)$ and let $\tilde{\nu} \in L_{\mathbb{F}}^2(t,T;\mathbb{R}^d)$ be an $\varepsilon$-optimal process.
Using the bound on $g$ given in Assumption \eqref{ass_init_cond} and using inequality \eqref{ineq:assumption-lemma-gamma-P}, we deduce from the above inequality that
\begin{align}  \nonumber
\mathbb{E} \left[ \int_t^T |\tilde{\nu}_s|^2 \dd s \right]  & \leq C \left( \inf_{\nu \in L_{\mathbb{F}}^2(t,T;\mathbb{R}^d)} J_{\gamma,P}(x,t,\nu) +  \varepsilon + 1 \right)\\
&  \leq C \left(\mathsf{u}[\gamma,P](x,t) +  2\right) \leq C, \nonumber
\end{align}
where the constant $C$ does not depend on $t$ and $\varepsilon$.
Thus any $\varepsilon$-optimal process lies in $L_{\mathbb{F}}^{2,C}(t,T;\mathbb{R}^d)$, which concludes the proof.
\end{proof}
We now consider the Hamilton-Jacobi-Bellman equation
\begin{equation} \label{eq:HJB}
\begin{array}{rlr}
- \partial_t u - \Delta u + \bm{H}[  \nabla u + A^\star P] = & \gamma, & (x,t) \in Q, \\
 u(x,T) = &  g(x), & x\in \mathbb{T}^d.
\end{array}
\end{equation}
By the classical dynamic programming theory, we know that $\mathsf{u}[\gamma,P]$ is the unique viscosity solution to \eqref{eq:HJB}.

\begin{lemma} \label{eq:HJB-unique-solution}
 There exists $\alpha \in (0,1)$, depending on $\gamma$ and $P$, such that $\mathsf{u}[\gamma,P] \in \mathcal{C}^{2+\alpha,1+\alpha/2}(Q)$. In addition there exists a constant $C >0$, only depending on $R$, such that 
 \begin{equation} \nonumber
 \|\mathsf{u}[\gamma,P]\|_{W^{2,1,p}(Q)} + \|\nabla \mathsf{u}[\gamma,P]\|_{W^{2,1,p}(Q)} \leq C.
 \end{equation}
\end{lemma}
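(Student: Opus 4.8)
The plan is to obtain the estimate by a bootstrap argument that treats \eqref{eq:HJB} as a linear parabolic equation whose right-hand side is progressively controlled. The whole scheme hinges on first securing uniform $L^\infty$ bounds on $u$ and $\nabla u$: once the vector $\nabla u + A^\star P$ is bounded, the quadratically growing term $\bm{H}[\nabla u + A^\star P]$ becomes a bounded source and classical theory applies. I would first establish $\|u\|_{L^\infty(Q)}\le C(R)$ directly from the control representation \eqref{eq:value function}: the lower bound \eqref{L_quad_growth1} together with \eqref{ass_psi_bounded}, \eqref{ass_f_bounded}, the constraint \eqref{ineq:assumption-lemma-gamma-P} and the bound on $g$ from \eqref{ass_init_cond} controls $J_{\gamma,P}$ from below, while evaluating $J_{\gamma,P}$ at the null control (using \eqref{ass_L_quad_growth2}) gives a matching upper bound. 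The crucial step is then a Lipschitz estimate of $u$ in the space variable. Fixing $x_1,x_2\in\mathbb{T}^d$ and an $\varepsilon$-optimal control $\nu$ for the problem started at $x_1$, I would feed the same $\nu$ (with the same Brownian path) into the problem started at $x_2$; since $\dd X_s=\nu_s\,\dd s+\sqrt2\,\dd B_s$ is affine, the two trajectories differ by the constant $x_2-x_1$. Comparing the costs, the running-cost difference is controlled by $C|x_2-x_1|(1+|\nu_s|^2)$ via \eqref{ass_L_Lipschitz}, the $A^\star P$ contribution by the Lipschitz continuity of $a$ in $x$ (from \eqref{ass_holder}) times $|P|\,|\nu_s|$, the congestion term by $\|\nabla\gamma\|_{L^\infty}\le R$, and the terminal term by the Lipschitz continuity of $g$. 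Since Lemma \ref{lem:u-L-infty} guarantees $\mathbb{E}\int_t^T|\nu_s|^2\,\dd s\le C(R)$, all these contributions are bounded by $C(R)|x_2-x_1|$; letting $\varepsilon\to0$ and exchanging the roles of $x_1,x_2$ yields $\|\nabla u\|_{L^\infty(Q;\mathbb{R}^d)}\le C(R)$ (understood a.e.).

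With $\nabla u$ bounded, the source $\gamma-\bm{H}[\nabla u + A^\star P]$ lies in $L^\infty(Q)\subset L^p(Q)$ with norm depending only on $R$ (by continuity of $H$ and $|A^\star P|\le \|a\|_\infty R$). Viewing \eqref{eq:HJB} as $-\partial_t u-\Delta u=\gamma-\bm{H}[\nabla u + A^\star P]$ with terminal datum $g\in\mathcal{C}^{2+\alpha_0}(\mathbb{T}^d)$, parabolic $L^p$ maximal regularity (see \cite{LSU}) yields $u\in W^{2,1,p}(Q)$ with $\|u\|_{W^{2,1,p}(Q)}\le C(R)$. Since $p>d+2$, the parabolic Sobolev embedding gives $\nabla u\in\mathcal{C}^{\alpha}(Q;\mathbb{R}^d)$; combined with the Hölder regularity of $H$ (Appendix \ref{appendix:reg-H}), of $a$ (from \eqref{ass_holder}), and of $\gamma,P$ in $\mathcal{U}^\beta$, the source becomes Hölder continuous and Schauder estimates upgrade $u$ to $\mathcal{C}^{2+\alpha,1+\alpha/2}(Q)$. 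Here the exponent $\alpha$ now depends on the Hölder moduli of $\gamma$ and $P$ (hence on $\gamma,P$, not merely on $R$), which explains the form of the statement.

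Finally, to reach $\nabla u\in W^{2,1,p}(Q)$ I would differentiate \eqref{eq:HJB} with respect to $x_i$ (legitimate once $u\in\mathcal{C}^{2+\alpha,1+\alpha/2}$, or via difference quotients), obtaining for each $\partial_{x_i}u$ the linear parabolic equation
\[
-\partial_t(\partial_{x_i}u)-\Delta(\partial_{x_i}u)+\bm{H}_p[\nabla u + A^\star P]\cdot\partial_{x_i}(\nabla u + A^\star P)=\partial_{x_i}\gamma-\bm{H}_{x_i}[\nabla u + A^\star P].
\]
Here $\bm{H}_p[\nabla u + A^\star P]=-v$ is a bounded first-order coefficient, while the right-hand side is controlled in $L^\infty$ by $\|\nabla\gamma\|_{L^\infty}\le R$, the bound on $H_x$ (Appendix \ref{appendix:reg-H}), and $\partial_{x_i}(A^\star P)=(D_{x_i}a^\star)P$, bounded through \eqref{ass_holder} and \eqref{ineq:assumption-lemma-gamma-P}. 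Applying $L^p$ maximal regularity to this equation, with terminal datum $\partial_{x_i}g\in\mathcal{C}^{1+\alpha_0}(\mathbb{T}^d)$, gives $\partial_{x_i}u\in W^{2,1,p}(Q)$ with norm bounded by $C(R)$; summing over $i$ yields the claimed estimate.

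The main obstacle is the Lipschitz-in-$x$ gradient estimate of the first paragraph: because $L$ grows quadratically (\eqref{ass_L_quad_growth2}, \eqref{L_quad_growth1}), the Hamiltonian grows quadratically in $p$, so the nonlinearity $\bm{H}[\nabla u + A^\star P]$ cannot be absorbed into an $L^p$ source without an a priori $L^\infty$ bound on $\nabla u$. Everything downstream is a routine application of $L^p$ and Schauder parabolic estimates once this bound is in hand; the delicate point in establishing it is that the comparison argument must absorb the quadratic factor $|\nu_s|^2$ appearing in \eqref{ass_L_Lipschitz}, which is precisely why the uniform $L^2$-bound on near-optimal controls provided by Lemma \ref{lem:u-L-infty} is indispensable.
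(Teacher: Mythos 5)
Your strategy is genuinely different from the paper's. You work directly on the value function $\mathsf{u}[\gamma,P]$, derive the $L^\infty$ bounds on $u$ and $\nabla u$ from the stochastic representation (the trajectory-shifting argument with a common control and Brownian path, absorbing the $|\nu_s|^2$ factor via Lemma \ref{lem:u-L-infty}), and then bootstrap through $L^p$ maximal regularity and Schauder estimates. The paper instead proves this lemma in Appendix \ref{Appendix:HJB} by a Leray--Schauder continuation: it defines the map $\mathcal{T}[u,\tau]$ solving the linear equation with source $\tau(\gamma-\bm{H}[\nabla u + A^\star P])$, shows $\mathcal{T}$ is continuous and compact, obtains the \emph{same} a priori $L^\infty$ bounds on $u$ and $\nabla u$ for fixed points (citing \cite{BHP-schauder} for exactly the estimate you derive by hand), and then deduces existence of a classical solution, which is identified with $\mathsf{u}[\gamma,P]$ by uniqueness of viscosity solutions. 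So your a priori estimates are the right ones and coincide with the paper's; the final step differentiating the equation in $x_i$ to bound $\|\nabla u\|_{W^{2,1,p}}$ is also essentially identical to the paper's Step 3.

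There is, however, a gap at the hinge of your bootstrap. After obtaining $\|\nabla u\|_{L^\infty}\le C(R)$ (where $\nabla u$ exists only a.e., by Rademacher), you ``view'' the equation as $-\partial_t u - \Delta u = \gamma - \bm{H}[\nabla u + A^\star P]$ and apply $L^p$ maximal regularity to conclude $u\in W^{2,1,p}(Q)$. But at that stage $u$ is only known to be a Lipschitz-in-$x$ viscosity solution; it is not yet known to solve the linear equation with that frozen right-hand side in the strong (a.e., $W^{2,1,p}$) sense, which is what Theorem \ref{theo:max_reg1} requires. Passing from ``Lipschitz viscosity solution of a uniformly parabolic equation with bounded measurable source'' to ``strong $W^{2,1,p}$ solution'' is a nontrivial regularity statement in its own right (it requires either the $W^{2,1,p}$ theory for $L^p$-viscosity solutions, or a separate construction of a classical solution followed by identification). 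The paper's fixed-point argument is designed precisely to avoid this circularity: Leray--Schauder \emph{produces} a $W^{2,1,p}$ (indeed classical) solution, using your a priori bounds as the uniform estimate on fixed points, and only then invokes viscosity uniqueness to identify it with the value function. To repair your proof you would either need to cite such a viscosity-to-strong regularity result, or insert an existence argument for the semilinear equation (e.g.\ truncating $H$ outside the ball of radius $C(R)$ and solving the resulting equation classically) before the bootstrap can start.
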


\begin{proof}
The proof is given in Appendix \ref{Appendix:HJB}. 
\end{proof}

\subsection{The individual problem as a PDE optimal control problem}

We consider in this subsection an equivalent formulation of \eqref{pb:individual-stochastic-control-problem} as an optimal control problem of the Fokker-Planck equation. To this purpose, we consider the mapping $\mathsf{m} : W^{1,0,\infty}(Q)  \to W^{2,1,p}(Q)$ which associates to any $v \in W^{1,0,\infty}(Q)$ the solution to the Fokker-Planck equation
\begin{equation}
\begin{array}{rlr}
\partial_t m - \Delta m + \nabla \cdot (v m)  =& 0, \quad & (x,t) \in Q, \\
m(x,0)  =& m_0(x), & x \in \mathbb{T}^d.
\end{array}
\end{equation}

\begin{lemma} \label{Lemma:m-mapping}
The mapping $\mathsf{m}$ is well defined. Moreover, for any $v \in W^{1,0,\infty}(Q)$, we have $\mathsf{m}[v](x,t) > 0$, for any $(x,t) \in Q$.
\end{lemma}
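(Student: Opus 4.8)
The plan is to establish two things: that the Fokker--Planck equation defining $\mathsf{m}$ admits a unique solution in $W^{2,1,p}(Q)$ for any fixed drift $v \in W^{1,0,\infty}(Q)$, and that this solution is strictly positive everywhere on $Q$. First I would verify well-posedness. The drift $v$ is bounded and has bounded spatial gradient, so writing the equation in divergence form as $\partial_t m - \Delta m + v \cdot \nabla m + (\nabla \cdot v) m = 0$ shows it is a linear parabolic equation with bounded coefficients on the torus $\mathbb{T}^d$. By Assumption \eqref{ass_init_cond} the initial datum $m_0$ lies in $\mathcal{C}^{2+\alpha_0}(\mathbb{T}^d)$, hence in $W^{2,p}(\mathbb{T}^d)$. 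Standard $L^p$ maximal parabolic regularity (the Ladyzhenskaya--Solonnikov--Uraltseva theory of \cite{LSU}, which the paper already invokes) then gives a unique solution $m \in W^{2,1,p}(Q)$; this is where I expect to lean on an off-the-shelf linear parabolic existence-uniqueness result, so the well-definedness half is essentially a citation once the coefficients are checked to have the right integrability.

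The genuinely substantive part is strict positivity. Here I would invoke the parabolic strong maximum principle (or equivalently the Harnack inequality) for the linear operator $\partial_t - \Delta + v \cdot \nabla + (\nabla \cdot v)$. A clean route that avoids worrying about the sign of the zeroth-order coefficient $\nabla \cdot v$ is the exponential-change-of-variable trick: set $\tilde m(x,t) = e^{\lambda t} m(x,t)$ for a constant $\lambda$ chosen so that $\lambda - \nabla \cdot v \ge 0$ on $Q$ (possible since $\nabla \cdot v \in L^\infty(Q)$ because $v \in W^{1,0,\infty}(Q)$). Then $\tilde m$ solves a parabolic equation whose zeroth-order coefficient is nonnegative, the classical weak maximum principle applies directly, and since $m_0 \ge \varepsilon_0 > 0$ by assumption, the minimum principle yields $\tilde m > 0$, hence $\mathsf{m}[v] = e^{-\lambda t}\tilde m > 0$ throughout $Q$.

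The main obstacle I anticipate is regularity of the drift coefficient: the strong maximum principle in its sharpest form wants continuous or at least bounded coefficients, and $\nabla \cdot v$ is only in $L^\infty$, not continuous. I would address this either by appealing to a version of the maximum principle valid for bounded measurable lower-order coefficients (available once $m \in W^{2,1,p}(Q)$ with $p > d+2$ embeds into $\mathcal{C}(Q)$ by Sobolev embedding, so the solution itself is continuous and the pointwise statement $\mathsf{m}[v](x,t) > 0$ makes sense), or by a Harnack-type estimate for weak solutions, which only requires $L^\infty$ coefficients. A secondary point worth a sentence is that the solution is genuinely nonnegative to begin with, which follows from the same minimum principle applied to the nonnegative initial datum; strict positivity is then the refinement. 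Because $p > d+2$ guarantees the Sobolev embedding $W^{2,1,p}(Q) \hookrightarrow \mathcal{C}(Q)$, the value $\mathsf{m}[v](x,t)$ is well-defined pointwise and the strict inequality holds at every $(x,t) \in Q$, as claimed.
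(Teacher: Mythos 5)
Your well-posedness half matches the paper: expand $\nabla\cdot(vm)$ to get a linear parabolic equation with coefficients $(b,c)=(v,\nabla\cdot v)$ in $L^\infty$, and invoke $L^p$ maximal parabolic regularity (the paper's Theorem \ref{theo:max_reg1}). The gap is in your positivity argument. After the exponential change of variables, the classical \emph{weak} minimum principle for an operator with nonnegative zeroth-order coefficient only yields $\min_Q \tilde m \geq \min\bigl(0,\min_{\mathbb{T}^d}\tilde m(\cdot,0)\bigr)=0$: the lower bound is capped at zero no matter how positive $m_0$ is, so you obtain $m\geq 0$, not $m>0$. (There is also a sign slip: with $\tilde m=e^{\lambda t}m$ the new zeroth-order coefficient is $\nabla\cdot v-\lambda\leq 0$; you want $\tilde m=e^{-\lambda t}m$ to make it nonnegative.) Strict positivity requires either the \emph{strong} maximum principle / Harnack inequality --- which you mention only as a fallback for the coefficient-regularity issue and never actually carry out --- or a quantitative barrier.

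The paper (Lemma \ref{lemma:maximum-principle}) takes the barrier route: it compares $m$ with the explicit subsolution $\varepsilon_0 e^{-\kappa t}$ for $\kappa>\|\nabla\cdot v\|_{L^\infty(Q)}$, by considering $y=e^{-\kappa t}\bigl(m-\varepsilon_0 e^{-\kappa t}\bigr)$ and showing that $\partial_t y>0$ at any interior minimum where $y<0$; the strict inequality comes precisely from $\kappa$ strictly dominating $\nabla\cdot v$, which is the ingredient the bare minimum principle lacks. This yields the uniform lower bound $m\geq\varepsilon_0 e^{-T\|\nabla\cdot v\|_{L^\infty(Q)}}$, which is needed later (Step 5 of Lemma \ref{bound:seq}), not just strict positivity. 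Since the pointwise evaluation of derivatives at the minimum requires $m\in C^{2,1}(Q)$, the paper first treats $v$ smooth and then mollifies, passing to the limit via the continuity of the solution map (Lemma \ref{lemma:continuity-m-b-c}); this resolves the coefficient-regularity obstacle you raise without appealing to a Harnack inequality for merely measurable coefficients. If you want to salvage your route, you must either run this barrier argument or genuinely invoke the strong maximum principle; the weak one does not suffice.
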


\begin{proof}
Direct consequence of Lemma \ref{lemma:maximum-principle}.
\end{proof}
 We define $\mathcal{B}^p = W^{2,1,p}(Q) \times W^{1,0,\infty}(Q)$ (recall that $p > d+2$ is fixed)
and we define
\begin{align} \nonumber
\mathcal{R} &= \left\{(m,v) \in \mathcal{B}^p, \partial_t m - \Delta m + \nabla \cdot (v m)  = 0, \, m(0) = m_0, \,  (x,t) \in Q \right\},\\
\tilde{\mathcal{R}}& =  \left\{(m,w) \in \mathcal{B}^p,\, \partial_t m - \Delta m + \nabla \cdot w = 0, \, m(0) = m_0, \, m(x,t) > 0,  \,(x,t) \in Q \right\}. \nonumber
\end{align}

\begin{lemma} \label{lemma:bijective-chi}
The mapping $\chi \colon \mathcal{R} \to \tilde{\mathcal{R}}$ given by $\chi(m,v) = (m,mv)$  is well-posed and bijective. Its inverse is given by $\chi^{-1}(m,w) = (m,w/m)$. 
\end{lemma}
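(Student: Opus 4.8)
The plan is to establish the three claims separately: that $\chi$ sends $\mathcal{R}$ into $\tilde{\mathcal{R}}$, that $\chi^{-1}$ sends $\tilde{\mathcal{R}}$ into $\mathcal{R}$, and that the two maps are mutually inverse. The PDE and initial conditions will transfer essentially for free. Indeed, setting $w = mv$ turns the constraint $\partial_t m - \Delta m + \nabla \cdot (vm) = 0$ into $\partial_t m - \Delta m + \nabla \cdot w = 0$, and the condition $m(0) = m_0$ is left untouched; conversely dividing $w$ by $m$ recovers the drift form. Hence all the genuine content sits in the two regularity claims ($mv \in W^{1,0,\infty}(Q)$ and $w/m \in W^{1,0,\infty}(Q)$) and in the positivity of $m$.

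First I would take $(m,v) \in \mathcal{R}$. Since $p > d+2$, the parabolic embedding $W^{2,1,p}(Q) \hookrightarrow \mathcal{C}^{1+\alpha, (1+\alpha)/2}(Q)$ (as in \cite{LSU}, and already used to obtain Lemma \ref{eq:HJB-unique-solution}) guarantees that $m$ and $\nabla m$ are continuous, hence bounded, on the compact set $Q$. Because $m$ solves the Fokker--Planck equation with drift $v \in W^{1,0,\infty}(Q)$ and datum $m_0$, it coincides with $\mathsf{m}[v]$, so Lemma \ref{Lemma:m-mapping} yields $m > 0$ on $Q$. It then remains to check $mv \in W^{1,0,\infty}(Q)$: the product $mv$ is bounded, and by the Leibniz rule $\nabla(mv) = v \otimes \nabla m + m \nabla v$ holds almost everywhere, with each term bounded since $\nabla m \in L^\infty(Q)$ and $v, \nabla v \in L^\infty(Q)$. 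Therefore $(m,mv) \in \tilde{\mathcal{R}}$.

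For the reverse direction I would take $(m,w) \in \tilde{\mathcal{R}}$. The same embedding makes $m$ continuous on $Q$; combined with $m > 0$ and the compactness of $Q = \mathbb{T}^d \times [0,T]$, this gives a \emph{uniform} lower bound $m \geq \varepsilon > 0$. Then $w/m$ is bounded, and the quotient rule gives $\nabla(w/m) = (m \nabla w - w \otimes \nabla m)/m^2 \in L^\infty(Q)$, so $w/m \in W^{1,0,\infty}(Q)$ and $(m, w/m) \in \mathcal{R}$. Finally, since $m > 0$ pointwise, the identities $m \cdot (w/m) = w$ and $(mv)/m = v$ hold everywhere, giving $\chi \circ \chi^{-1} = \mathrm{id}_{\tilde{\mathcal{R}}}$ and $\chi^{-1} \circ \chi = \mathrm{id}_{\mathcal{R}}$, which proves bijectivity with the claimed inverse. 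The main obstacle is precisely the regularity of the quotient: one cannot divide by $m$ using only the pointwise positivity in the definition of $\tilde{\mathcal{R}}$, so the key step is to upgrade $m > 0$ to a bound away from zero, which is exactly where the continuity of $m$ (via the $W^{2,1,p}$ embedding) and the compactness of $Q$ are used.
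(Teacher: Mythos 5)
Your proof is correct and follows essentially the same route as the paper's: identify $m$ with $\mathsf{m}[v]$, use the embedding of $W^{2,1,p}(Q)$ into (H\"older-)continuous functions to bound $m$ and $\nabla m$, invoke the maximum-principle lemma for positivity, and conclude via the product and quotient rules. The only difference is that you make explicit a detail the paper's ``similarly'' leaves implicit, namely that the pointwise positivity $m>0$ in the definition of $\tilde{\mathcal{R}}$ must be upgraded to a uniform lower bound via continuity of $m$ and compactness of $Q$ before one can divide by $m$ in $W^{1,0,\infty}(Q)$.
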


\begin{proof}
Let $(m,v) \in \mathcal{R}$. We have that $m = \mathsf{m}[v] \in W^{2,1,p}(Q)$, thus $m \in L^\infty(Q)$ and $\nabla m \in L^\infty(Q;\R^d)$, by Lemma \ref{lemma:max_reg_embedding}. It follows that $w\coloneqq \mathsf{m}[v] v \in W^{1,0,\infty}(Q)$.
Moreover, $m> 0$, by Lemma \ref{Lemma:m-mapping}. Therefore $(m,w) \in \tilde{\mathcal{R}}$, that is, $\chi$ is well defined.
Similarly, for any $(m,w) \in \tilde{\mathcal{R}}$, we have that  $w/m \in W^{1,0,\infty}(Q)$ and $\mathsf{m}[w/m] \in W^{2,1,p}(Q)$.
Obviously we have $\chi \circ \chi^{-1} = id$ and $\chi^{-1}  \circ   \chi = id$, which concludes the proof.
\end{proof}

\begin{remark} \label{rem:equivalence}
Let $(m,v) \in \mathcal{R}$ and let $(m,w)= \chi(m,v) \in \tilde{\mathcal{R}}$. Recalling the definition of the perspective function \eqref{def:perspective-function}, we have
\begin{equation}
\int_{Q} \bm{L}[v] m \dd x \dd t = \int_{Q} \tilde{\bm{L}}[m,w] \dd x \dd t. \nonumber
\end{equation}
This fact, together with the existence of a bijection between $\mathcal{R}$ and $\tilde{\mathcal{R}}$, will allow to prove the equivalence of the optimal control problems, introduced later, posed over $\mathcal{R}$ and $\tilde{\mathcal{R}}$.
\end{remark}

We define the individual cost $\mathcal{Z}_{\gamma,P} \colon \mathcal{R} \to \mathbb{R}$,
\begin{equation} \nonumber
\mathcal{Z}_{\gamma,P}(m,v) = \int_{Q} \left( \bm{L}[v] + \gamma \right) m \dd x \dd t + \int_0^T \langle A[m v], P \rangle \dd t + \int_{\mathbb{T}^d}g m(T) \dd x.
\end{equation}
We define the following individual control problem
\begin{equation} \label{pb:individual-control-problem} \tag{$\mathcal{P}_{\gamma,P}$}
\inf_{(m,v) \in \mathcal{R}} \mathcal{Z}_{\gamma,P}(m,v).
\end{equation}
Here the state equation of the agent is a Fokker-Planck equation with controlled drift $v$. 
We define the individual cost $\tilde{\mathcal{Z}}_{\gamma,P} \colon \tilde{\mathcal{R}} \to \mathbb{R}$,
\begin{equation} \nonumber
\tilde{\mathcal{Z}}_{\gamma,P}(m,w) = \int_{Q} \left( \tilde{\bm{L}}[m,w] + \gamma m \right) \dd x \dd t + \int_0^T \langle A[w], P \rangle \dd t + \int_{\mathbb{T}^d}g m(T) \dd x,
\end{equation}
where $\tilde{\bm{L}}$ is the perspective function of $\bm{L}$ (see the definition \eqref{def:perspective-function}),
and the following control problem
\begin{equation} \label{pb:individual-control-problem-w} \tag{$\tilde{\mathcal{P}}_{\gamma,P}$}
\inf_{(m,w) \in \tilde{\mathcal{R}}} \tilde{\mathcal{Z}}_{\gamma,P}(m,w).
\end{equation}

Given $v \in W^{1,0,\infty}(Q)$, we denote $(X^{v}_{s})_{s \in [0,T]}$ the solution to the following stochastic differential equation
\begin{equation}
\dd X_s = v(X_s,s) \dd s + \sqrt{2} \dd B_s, \quad X_0= Y.
\end{equation}
We further consider the associated control $\nu^v_s \in L_{\mathbb{F}}^{2}(0,T;\mathbb{R}^d)$ defined by $\nu^v_s = v(s,X^v_s)$.

\begin{lemma} \label{lemma:equality-of-cost}
For any $v \in W^{1,0,\infty}(Q, \mathbb{R}^d)$, we have
\begin{equation*}
Z_{\gamma,P}(\nu^v)  = \mathcal{Z}_{\gamma,P}(\mathsf{m}[v],v) = \tilde{\mathcal{Z}}_{\gamma,P} \circ \chi (\mathsf{m}[v],v).
\end{equation*}
\end{lemma}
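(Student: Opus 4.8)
The plan is to prove the two equalities separately, treating the second one as an essentially immediate consequence of Remark \ref{rem:equivalence} and concentrating the effort on the first, which carries the genuine probabilistic content. Throughout I would abbreviate $m = \mathsf{m}[v]$ and $w = mv$, so that $\chi(m,v) = (m,w)$ by Lemma \ref{lemma:bijective-chi}.

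For the first equality $Z_{\gamma,P}(\nu^v) = \mathcal{Z}_{\gamma,P}(m,v)$, I would start from the definition of $Z_{\gamma,P}$ evaluated at the feedback control $\nu^v_s = v(X^v_s,s)$. Substituting the feedback form rewrites the integrand through the Nemytskii operator, $L(X^v_s,s,\nu^v_s) = \bm{L}[v](X^v_s,s)$, while the definition of $A^\star$ gives $\langle A^\star[P](X^v_s,s),\nu^v_s\rangle = \langle P(s),\, a(X^v_s,s)\, v(X^v_s,s)\rangle$. Since $v \in W^{1,0,\infty}(Q)$ is bounded, every integrand is integrable and Fubini's theorem permits exchanging $\mathbb{E}$ with $\int_0^T$. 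The key identity I would then invoke is that the law of $X^v_s$ admits the density $m(\cdot,s) = \mathsf{m}[v](\cdot,s)$ for each $s$; granting this, each term collapses to a spatial integral against $m$:
\begin{equation*}
\mathbb{E}[\bm{L}[v](X^v_s,s)] = \int_{\mathbb{T}^d}\! \bm{L}[v]\, m \dd x, \quad \mathbb{E}[\gamma(X^v_s,s)] = \int_{\mathbb{T}^d}\! \gamma\, m \dd x, \quad \mathbb{E}[g(X^v_T)] = \int_{\mathbb{T}^d}\! g\, m(T) \dd x,
\end{equation*}
and, by the definition of $A$, $\mathbb{E}[\langle P(s), a(X^v_s,s) v(X^v_s,s)\rangle] = \langle A[mv](s), P(s)\rangle$. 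Integrating in $s$ and summing reproduces exactly $\mathcal{Z}_{\gamma,P}(m,v)$.

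The main obstacle is justifying the density identity, namely that the marginal law of the diffusion $X^v$ is $\mathsf{m}[v](\cdot,s)$. This is the forward Kolmogorov (Fokker--Planck) representation. Because $v$ is bounded and Lipschitz in space uniformly in time, the SDE $\dd X_s = v(X_s,s)\dd s + \sqrt{2}\, \dd B_s$ with $X_0 = Y \sim m_0$ has a unique strong solution, and applying It\^o's formula to test functions $\varphi \in \mathcal{C}^\infty(\mathbb{T}^d)$ shows that its time-marginals $\mu_s := \mathrm{Law}(X^v_s)$ solve, in the distributional sense, the Cauchy problem $\partial_t \mu - \Delta \mu + \nabla\cdot(v\mu) = 0$, $\mu_0 = m_0$. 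Since Lemma \ref{Lemma:m-mapping} furnishes the unique (regular, positive) solution $\mathsf{m}[v]$ to the same problem, uniqueness of distributional solutions forces $\mu_s = \mathsf{m}[v](\cdot,s)$, which is precisely the identity used above.

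Finally, the second equality $\mathcal{Z}_{\gamma,P}(m,v) = \tilde{\mathcal{Z}}_{\gamma,P}(m,w)$ follows with no further analysis. The congestion and terminal contributions $\int_Q \gamma m \dd x \dd t$ and $\int_{\mathbb{T}^d} g\, m(T) \dd x$ appear verbatim in both functionals; the price terms coincide because $w = mv$ yields $A[w] = A[mv]$; and the running-cost terms agree by Remark \ref{rem:equivalence}, which states exactly that $\int_Q \bm{L}[v]\, m \dd x \dd t = \int_Q \tilde{\bm{L}}[m,w] \dd x \dd t$ for $(m,w) = \chi(m,v)$. Assembling these three observations closes the chain of equalities.
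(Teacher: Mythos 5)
Your proof is correct and follows essentially the same route as the paper's: identify the marginal law of $X^v$ with $\mathsf{m}[v](\cdot,t)$, use the feedback form $\nu^v_s = v(X^v_s,s)$ to collapse the expectations into integrals against $m$, and dispatch the second equality via Remark \ref{rem:equivalence}. The only difference is that you spell out the Fokker--Planck/Kolmogorov justification of the density identity, which the paper simply asserts.
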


\begin{proof}
It is clear that $\mathcal{Z}_{\gamma,P}(\mathsf{m}[v],v) = \tilde{\mathcal{Z}}_{\gamma,P}\circ \chi (\mathsf{m}[v],v)$, see Remark \ref{rem:equivalence}.
Since $v \in W^{1,0,\infty}(Q, \mathbb{R}^d)$, the process $\nu^v$ lies in  $L_{\mathbb{F}}^{2}(0,T;\mathbb{R}^d)$ and  $Z_{\gamma,P}(\nu^v) < + \infty$.
For any $t \in [0,T]$, $\mathsf{m}[v](\cdot,t)$ is the probability density of the distribution of $X_t^v$. In addition we have by definition that $\nu^v_t = v(t,X^v_t)$, which yields that $Z_{\gamma,P}(\nu^v)  = \mathcal{Z}_{\gamma,P}(\mathsf{m}[v],v)$.
\end{proof}

\begin{lemma} \label{lemma:link-sto-edp} 
Let $u = \mathsf{u}[\gamma,P]$ and let $v = - \bm{H}_p[\nabla u + A^\star P]$.
Let $m= \mathsf{m}[v]$ and let $(m,w)= \chi(m,v)$.
\begin{enumerate}
\item \label{point:vmw-c-alpha} There exists $\alpha \in (0,1)$, depending on $\gamma$ and $P$, such that
\begin{equation} \nonumber
v \in \mathcal{C}^{1+ \alpha,\alpha}(Q;\R^d), \quad
m \in \mathcal{C}^{2+\alpha,1+ \alpha/2}(Q), \quad
w \in \mathcal{C}^{1+ \alpha,\alpha}(Q;\R^d).
\end{equation}
\item \label{point:mvw-W-21p} There exists $C >0$, depending only on $R$, such that
\begin{equation} \nonumber
\| v \|_{W^{1,0,\infty}(Q;\R^d)} \leq C, \quad
\| m \|_{W^{2,1,p}(Q)} \leq C, \quad
\| w \|_{W^{1,0,\infty}(Q;\R^d)} \leq C.
\end{equation}
\item \label{point-verif} The stochastic process $(\nu^v_s)_{s \in [0,T]}$ is the solution to \eqref{pb:individual-stochastic-control-problem}.
\item \label{point-opti-mv-indiv} The pair $(m,v)$ is a solution to \eqref{pb:individual-control-problem} and $(m,w)$ is a solution to \eqref{pb:individual-control-problem-w}.
\end{enumerate}
\end{lemma}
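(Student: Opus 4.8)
The plan is to establish the four points in order, each feeding into the next, with the classical verification theorem of stochastic control as the backbone: Points \ref{point:vmw-c-alpha}–\ref{point:mvw-W-21p} supply the regularity needed to apply Itô's formula rigorously, Point \ref{point-verif} identifies the optimal feedback, and Point \ref{point-opti-mv-indiv} transfers this optimality to the two PDE formulations.

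For the regularity and bounds (Points \ref{point:vmw-c-alpha}–\ref{point:mvw-W-21p}), I would start from Lemma \ref{eq:HJB-unique-solution}, which gives $u \in \mathcal{C}^{2+\alpha,1+\alpha/2}(Q)$, so that $\nabla u \in \mathcal{C}^{1+\alpha,\alpha}(Q;\R^d)$. Using $a, D_x a \in \mathcal{C}^{\alpha_0}$ from \eqref{ass_holder} and $P \in \mathcal{C}^\beta(0,T;\R^k)$, the argument $p := \nabla u + A^\star P$ lies in $\mathcal{C}^\alpha(Q;\R^d)$ with $D_x p \in \mathcal{C}^\alpha$. Composing with $H_p$, whose derivatives $H_{pp}, H_{px}$ are Hölder by the regularity of $H$ collected in the appendix, the chain rule $D_x v = -\bm{H}_{pp}[p] D_x p - \bm{H}_{px}[p]$ shows $v \in \mathcal{C}^{1+\alpha,\alpha}(Q;\R^d)$. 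Rewriting the Fokker--Planck equation as $\partial_t m - \Delta m + \langle v, \nabla m\rangle + (\div v)\,m = 0$, a linear parabolic equation with Hölder coefficients and data $m_0 \in \mathcal{C}^{2+\alpha_0}$ from \eqref{ass_init_cond}, parabolic Schauder estimates give $m \in \mathcal{C}^{2+\alpha,1+\alpha/2}(Q)$, and $w = mv$ is then a product in $\mathcal{C}^{1+\alpha,\alpha}(Q;\R^d)$. For the $R$-uniform bounds, I would use $\|u\|_{W^{2,1,p}(Q)} + \|\nabla u\|_{W^{2,1,p}(Q)} \le C$ from Lemma \ref{eq:HJB-unique-solution} together with the embedding of Lemma \ref{lemma:max_reg_embedding} (valid since $p > d+2$) to bound $\nabla u$ and $D^2_{xx}u$ in $L^\infty$; with $\|P\|_\infty \le R$ and $a$ bounded this controls $p$ in $W^{1,0,\infty}$, and the local Lipschitz character of $H_p$ yields $\|v\|_{W^{1,0,\infty}} \le C(R)$. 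Maximal regularity for the Fokker--Planck equation with bounded drift then gives $\|m\|_{W^{2,1,p}} \le C(R)$, and the product bound gives $\|w\|_{W^{1,0,\infty}} \le C(R)$.

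Point \ref{point-verif} is the heart of the argument. Since $u$ is a classical solution of the HJB equation, I would apply Itô's formula to $s \mapsto u(X_s^\nu,s)$ along an arbitrary admissible $\nu$, using the generator $\langle\nu,\nabla\rangle + \Delta$ of the SDE. Because $\nabla u$ is bounded and $\nu \in L^2_{\mathbb{F}}$, the stochastic integral is a true martingale; taking expectations and substituting $\partial_t u + \Delta u = \bm{H}[p] - \gamma$ from the HJB equation yields
\begin{equation}\nonumber
J_{\gamma,P}(x,t,\nu) - \mathsf{u}[\gamma,P](x,t) = \mathbb{E}\int_t^T \big( L(X_s,s,\nu_s) + \langle p(X_s,s), \nu_s\rangle + \bm{H}[p](X_s,s) \big)\, \dd s,
\end{equation}
with $p = \nabla u + A^\star P$. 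By the definition of $H$ as a supremum, the integrand is nonnegative and vanishes exactly when $\nu_s = -\bm{H}_p[p](X_s,s) = v(X_s,s)$. Hence $J_{\gamma,P}(x,t,\nu) \ge \mathsf{u}[\gamma,P](x,t)$ for every $\nu$, with equality for $\nu = \nu^v$; restricting to $L_{\mathbb{F}}^{2,C}(t,T;\R^d)$ via Lemma \ref{lem:u-L-infty} secures the integrability, proving $\nu^v$ solves \eqref{pb:individual-stochastic-control-problem}. For Point \ref{point-opti-mv-indiv}, by Point \ref{point:mvw-W-21p} we have $(m,v) \in \mathcal{R}$ and, by Lemma \ref{lemma:bijective-chi}, $(m,w) = \chi(m,v) \in \tilde{\mathcal{R}}$. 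Any competitor $(m',v') \in \mathcal{R}$ has $v' \in W^{1,0,\infty}$ by definition of $\mathcal{B}^p$, so Lemma \ref{lemma:equality-of-cost} gives $\mathcal{Z}_{\gamma,P}(m',v') = Z_{\gamma,P}(\nu^{v'}) \ge Z_{\gamma,P}(\nu^v) = \mathcal{Z}_{\gamma,P}(m,v)$, the inequality being exactly Point \ref{point-verif}; thus $(m,v)$ solves \eqref{pb:individual-control-problem}. Transferring through $\chi$ and the identity $\tilde{\mathcal{Z}}_{\gamma,P}\circ\chi = \mathcal{Z}_{\gamma,P}$ on $\mathcal{R}$ (Remark \ref{rem:equivalence}) shows $(m,w)$ solves \eqref{pb:individual-control-problem-w}.

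I expect the principal difficulty to lie in making the verification step (Point \ref{point-verif}) fully rigorous: justifying that the Itô correction term is a genuine zero-mean martingale and that the feedback $\nu^v$ is admissible and attains the pointwise infimum in the Hamiltonian. This is precisely where the classical regularity of $u$ from Lemma \ref{eq:HJB-unique-solution} and the a priori $L^2$ bound from Lemma \ref{lem:u-L-infty} are indispensable. By contrast, the regularity bootstrap in Point \ref{point:vmw-c-alpha} is routine Schauder chaining, requiring only care in tracking the Hölder exponents through the chain rule defining $v$.
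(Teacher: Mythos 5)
Your proposal is correct and follows essentially the same route as the paper: the same regularity chain for $v$, $m$, $w$ via Lemma \ref{eq:HJB-unique-solution}, the formula for $D_x v$, the parabolic estimates, and the transfer of optimality through Lemma \ref{lemma:equality-of-cost} and Remark \ref{rem:equivalence}. The only difference is that you spell out the It\^o/verification argument for Point \ref{point-verif}, which the paper simply invokes as ``a classical verification argument''; your version of it is accurate.
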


\begin{proof} 
\noindent \textit{Point \ref{point:vmw-c-alpha}.}
We know that $H_p$ is H{\"o}lder continuous
(Lemma \ref{lemma:reg_H}), $\nabla u$ is H{\"o}lder continuous (Lemma \ref{eq:HJB-unique-solution}), and $P$ is H{\"o}lder continuous by assumption. Thus $v$ is H{\"o}lder continuous.
Now we show that $D_x v \in \mathcal{C}^{\alpha}(Q,\R^{d\times d})$. The derivative of $v$ is given by
\begin{equation} \label{eq:derivative-v}
D_x v = - \bm{H}_{px}[\nabla u + A^\star P] - \bm{H}_{pp}[\nabla u + A^\star P](D^2_{xx} u + D_x A^\star P).
\end{equation}
Assumption \eqref{ass_holder} yields $D_x A^\star P \in \mathcal{C}^\alpha(Q;\mathbb{R}^{d \times d})$. In addition we have that $\nabla u \in \mathcal{C}^\alpha(Q;\mathbb{R}^d)$ and $ D_{xx}^2 u \in \mathcal{C}^\alpha(Q;\mathbb{R}^{d \times d})$. Finally the H{\"o}lder continuity of $H_{pp}$ (see Lemma \ref{lemma:reg_H}) yields  $v \in \mathcal{C}^{1+\alpha,\alpha}(Q)$. It follows that $m \in \mathcal{C}^{2+\alpha,1+ \alpha/2}(Q;\R^d)$, by Theorem \ref{theo:holder_reg_classical} and $w = m v \in \mathcal{C}^{1+\alpha,\alpha}(Q;\R^d)$, as was to be proved.

\noindent \textit{Point \ref{point:mvw-W-21p}.} The constants $C$ used for proving the second point only depend on $C$.
By Lemma \ref{lemma:reg_H}, $H_p$, $H_{pp}$, and $H_{px}$ are H{\"o}lder continuous.
By \eqref{ineq:assumption-lemma-gamma-P} and Lemma \ref{eq:HJB-unique-solution}, there exists $C>0$ only depending on $R$ such that $\|v\|_{L^\infty(Q;\mathbb{R}^d)} \leq C$.

We use again formula \eqref{eq:derivative-v} for proving that $D_x v$ is uniformly bounded. We know that $a$ and $D_x a$ are bounded (Assumption \eqref{ass_holder}) and by Lemma \ref{eq:HJB-unique-solution}, $\nabla u$ and $D_xx^2 u$ are bounded in $L^\infty$ by some constant depending on $R$.
We conclude that $\| v \|_{W^{1,0,\infty}(Q;\R^d)} \leq C$, for some $C$ depending only on $R$.
Now we have that $m$ is the solution to the Fokker-Planck equation
\begin{equation} \nonumber
\begin{array}{rlr}
\partial_t m - \Delta m + m (\nabla \cdot v) + \nabla m \cdot v = & 0, \quad & (x,t) \in Q, \\
m(0,x) = & m_0(x), & x \in \mathbb{T}^d.
\end{array}
\end{equation}
Since $\| v \|_{W^{1,0,\infty}(Q;\R^d)} \leq C$, we have that $m$ is the solution of a parabolic PDE with bounded coefficients, which implies that $\|m\|_{W^{2,1,p}(Q)} \leq C$,
by Theorem \ref{theo:max_reg1}.
By Lemma \ref{lemma:max_reg_embedding}, we have $ \| m \|_{L^\infty(Q)} \leq C $ and $\| \nabla m_k \|_{L^\infty(Q;\R^d)} \leq C$. It follows that  $\| w \|_{W^{1,0,\infty}(Q;\R^d)} \leq C$ since $w = mv$.

\noindent \textit{Point \ref{point-verif}.}
The statement holds by a classical verification argument. 

\noindent \textit{Point \ref{point-opti-mv-indiv}.}
This is a direct consequence of Point \ref{point-verif} and Lemma \ref{lemma:equality-of-cost}. Indeed, for any $(m',v') \in \mathcal{R}$, we have
\begin{equation} \nonumber
\mathcal{Z}_{\gamma,P}(m',v')
= Z_{\gamma,P}(\nu^{v'})
\geq Z_{\gamma,P}(\nu^v)
= \mathcal{Z}_{\gamma,P}(\mathsf{m}[v],v),
\end{equation}
which proves the optimality of $(\mathsf{m}[v],v)$. The optimality of $\chi (\mathsf{m}[v],v)$ follows then from Remark \ref{rem:equivalence}.
\end{proof}

\section{Properties of the solution to the mean field game system \label{section:MFG}}

We first recall the main result of \cite{BHP-schauder} concerning the existence and uniqueness of a solution $(\bar m,\bar v,\bar u,\bar \gamma,\bar P)$ to \eqref{eq:MFG-gcg}.
Then we establish a quadratic growth property (inequality \eqref{estim:quad-v}) which is at the heart of our convergence analysis in Section \ref{sec:convergence}.
It allows to show that $(\bar m, \bar v)$ is the unique solution to an optimization problem \eqref{pb:control-alpha-FP} and that $(\bar m, \bar m \bar v)$ is the unique solution to an equivalent convex potential problem \eqref{pb:control-w-FP}.
With an analogous reasonning, we prove the uniqueness of the solutions to  problems \eqref{pb:individual-control-problem} and \eqref{pb:individual-control-problem-w}.

\begin{theorem} \label{theo:main}
There exists $\alpha \in (0,1)$ such that \eqref{eq:MFG-gcg} has a unique classical solution $(\bar m,\bar v,\bar u,\bar \gamma,\bar P)$, with
\begin{equation} \label{eq:main}
\begin{cases}
\begin{array}{rl}
\bar m \in & \mathcal{C}^{2+\alpha,1+ \alpha/2}(Q), \\
\bar v \in & \mathcal{C}^{1+ \alpha,\alpha}(Q;\R^d)\\
\bar u \in & \mathcal{C}^{2+\alpha,1+ \alpha/2}(Q), \\
\bar \gamma \in & \mathcal{C}^{\alpha}(Q), \\
\bar P \in & \mathcal{C}^\alpha(0,T;\R^k).
\end{array}
\end{cases}
\end{equation}
\end{theorem}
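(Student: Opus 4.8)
Existence and uniqueness are established in \cite{BHP-schauder}; I sketch a self-contained route resting on the estimates of Section~\ref{sec:stochastic-pb}.

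\emph{Existence.} The plan is to realize \eqref{eq:MFG-gcg} as a fixed point of the operator $\Theta$ on $\mathcal{U}^\beta$ (see \eqref{eq:setUbeta}) that sends a coupling pair $(\gamma,P)$ to $(\gamma',P')$ along the chain: solve the HJB equation to obtain $u=\mathsf{u}[\gamma,P]$, set $v=-\bm{H}_p[\nabla u+A^\star P]$, solve the Fokker--Planck equation to obtain $m=\mathsf{m}[v]$, and read off $\gamma'(x,t)=f(x,t,m(t))$, $P'(t)=\bm{\phi}[A[vm]](t)$. A fixed point of $\Theta$ is exactly a solution of \eqref{eq:MFG-gcg}. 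To invoke Schauder's theorem I would: (i) produce a closed bounded convex subset of $\mathcal{U}^\beta$ stable under $\Theta$, the $L^\infty$ bounds \eqref{ass_psi_bounded}, \eqref{ass_f_bounded} controlling the size of $(\gamma',P')$ and the H\"older estimates of Lemmas~\ref{eq:HJB-unique-solution}, \ref{lemma:link-sto-edp} (combined with \eqref{ass_hold_f}, \eqref{ass_Lip_phi}) controlling their H\"older seminorms; (ii) upgrade these to bounds in a strictly finer H\"older space, so that the image of $\Theta$ is relatively compact by compact embedding; and (iii) check continuity of $\Theta$ in the $\mathcal{U}^\beta$ topology, which reduces to continuous dependence of the HJB and Fokker--Planck solutions on their data.

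\emph{Regularity.} Given a fixed point $(\bar\gamma,\bar P)$, the asserted regularity \eqref{eq:main} follows by bootstrapping through the same chain: Lemma~\ref{eq:HJB-unique-solution} gives $\bar u\in\mathcal{C}^{2+\alpha,1+\alpha/2}(Q)$; Point~\ref{point:vmw-c-alpha} of Lemma~\ref{lemma:link-sto-edp} gives $\bar v\in\mathcal{C}^{1+\alpha,\alpha}(Q;\R^d)$ and $\bar m\in\mathcal{C}^{2+\alpha,1+\alpha/2}(Q)$; finally $\bar\gamma=f(\cdot,\bar m)\in\mathcal{C}^{\alpha}(Q)$ by \eqref{ass_hold_f} and $\bar P=\bm{\phi}[A[\bar v\bar m]]\in\mathcal{C}^{\alpha}(0,T;\R^k)$ by \eqref{ass_Lip_phi}.

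\emph{Uniqueness.} Let $(m_i,v_i,u_i,\gamma_i,P_i)$, $i=1,2$, be two solutions; write $\bar u=u_1-u_2$, $\bar m=m_1-m_2$, $q_i=\nabla u_i+A^\star P_i$, and $z_i=A[v_im_i]$. Both share the data $g$ and $m_0$, so $\int_0^T\frac{\dd}{\dd t}\Big(\int_{\mathbb{T}^d}\bar u\,\bar m\,\dd x\Big)\dd t=0$; inserting the HJB and Fokker--Planck equations and integrating the second-order and divergence terms by parts on the torus leaves
\begin{equation} \nonumber
0 = \int_Q \bar m\,(\bm{H}[q_1]-\bm{H}[q_2])\,\dd x\,\dd t + \int_Q \nabla\bar u\cdot(v_1 m_1 - v_2 m_2)\,\dd x\,\dd t - \int_Q \bar m\,(\gamma_1-\gamma_2)\,\dd x\,\dd t.
\end{equation}
Using $\nabla\bar u=(q_1-q_2)-A^\star(P_1-P_2)$, $v_i=-\bm{H}_p[q_i]$ and the adjointness of $A,A^\star$, the right-hand side splits into three contributions, each nonpositive: a Hamiltonian term equal to $-\int_Q(m_1\,D_H(q_2,q_1)+m_2\,D_H(q_1,q_2))\,\dd x\,\dd t\le 0$ (with $m_i>0$ and $D_H$ the pointwise Bregman divergence of the convex map $p\mapsto H(x,t,p)$), a price term $-\int_0^T\langle\bm{\phi}[z_1]-\bm{\phi}[z_2],z_1-z_2\rangle\,\dd t\le 0$ by monotonicity of $\phi$, and a congestion term $-\int_Q\bar m\,(\gamma_1-\gamma_2)\,\dd x\,\dd t\le 0$ by monotonicity of $f$. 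Since they sum to zero, each vanishes; the strict convexity of $H$ inherited from \eqref{eq:grad_monotony} then forces $q_1=q_2$, hence $v_1=v_2$, hence $m_1=m_2$ (uniqueness for Fokker--Planck), and finally $\gamma_1=\gamma_2$, $P_1=P_2$ and $u_1=u_2$.

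\emph{Main obstacle.} I expect the technical heart to be the compactness and continuity of $\Theta$ demanded by Schauder, i.e.\ propagating H\"older regularity cleanly across the coupled HJB/Fokker--Planck pair; in the uniqueness step the only subtlety is the bookkeeping of the price coupling $A^\star P$ inside the Hamiltonian, which must be reorganized (via the adjoint identity) so that running-cost convexity and price monotonicity land in separate terms of definite sign.
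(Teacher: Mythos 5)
Your proposal is correct and matches the paper, whose entire proof is a citation of \cite[Theorem 1, Proposition 2]{BHP-schauder}; your Schauder fixed-point scheme on the coupling pair $(\gamma,P)$, the regularity bootstrap through Lemmas \ref{eq:HJB-unique-solution} and \ref{lemma:link-sto-edp}, and the Lasry--Lions cross-duality/monotonicity argument for uniqueness are precisely the content of that reference. The only cosmetic point is that in the uniqueness step the conclusion $q_1=q_2$ is most directly obtained from the quantitative Bregman bound of Lemma \ref{lemma:H-L-quad} (which gives $D_H\geq \frac{1}{C}|v_1-v_2|^2$ under \eqref{eq:grad_monotony}) rather than from an appeal to strict convexity of $H$.
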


\begin{proof}
Direct application of \cite[Theorem 1, Proposition 2]{BHP-schauder}.
\end{proof}

We define the following primal problem
\begin{equation} \label{pb:control-alpha-FP} \tag{P}
\inf_{(m,v) \in \mathcal{R}} \mathcal{J}(m,v) \coloneqq \int_{Q} \bm{L}[v] m \dd x \dd t + \int_{0}^{T}\left( \bm{F}[m] + \bm{\Phi}[A[m v]] \right)\dd t + \int_{\mathbb{T}^d} g m(T) \dd x.
\end{equation}

\begin{lemma} \label{lemma:J-J}
Let $(\bar{m},\bar{v}, \bar{u}, \bar{\gamma}, \bar{P})$ be the solution to \eqref{eq:MFG-gcg}. Then there exists a constant $C>0$ such that for any $(m,v) \in \mathcal{R}$ we have the following estimate:
\begin{equation}\label{estim:quad-v}
\mathcal{J}(m,v) - \mathcal{J}(\bar{m},\bar{v}) \geq \frac{1}{C} \int_Q |v - \bar{v}|^2 m \dd x \dd t.
\end{equation}
\end{lemma}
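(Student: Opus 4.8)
The plan is to pass through the Benamou--Brenier convexification and to extract the quadratic term entirely from the strong convexity \eqref{eq:grad_monotony} of the running cost. Writing $w = mv$ and $\bar w = \bar m\bar v$, Remark \ref{rem:equivalence} shows that $\mathcal{J}(m,v)$ equals the convex functional
\[(m,w)\mapsto \int_Q \tilde{\bm L}[m,w]\,\dd x\,\dd t + \int_0^T\big(\bm F[m] + \bm\Phi[A[w]]\big)\,\dd t + \int_{\mathbb T^d} g\,m(T)\,\dd x,\]
evaluated at $(m,mv)$, and whose minimizer is $(\bar m,\bar w)$. I would estimate $\mathcal{J}(m,v) - \mathcal{J}(\bar m,\bar v)$ by summing a first-order (subgradient) inequality for each of the three convex blocks, retaining an explicit quadratic remainder only for the perspective term; the linear parts will then be shown to cancel by virtue of the optimality system \eqref{eq:MFG-gcg}.

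The heart of the argument is a pointwise estimate for the perspective function. Since $\bar m>0$ on $Q$ (Theorem \ref{theo:main}) and $m>0$ on $Q$ (Lemma \ref{Lemma:m-mapping}), a direct computation of $\nabla_{(m,w)}\tilde L$ gives $\partial_m\tilde L(\bar m,\bar w) = L(\bar v) - \langle L_v(\bar v),\bar v\rangle$ and $\partial_w\tilde L(\bar m,\bar w) = L_v(\bar v)$, and the associated Bregman remainder collapses to
\[\tilde L(x,t,m,w) - \tilde L(x,t,\bar m,\bar w) - \big\langle\nabla_{(m,w)}\tilde L(x,t,\bar m,\bar w),(m-\bar m,w-\bar w)\big\rangle = m\big(L(v) - L(\bar v) - \langle L_v(\bar v),v-\bar v\rangle\big).\]
Integrating the strong monotonicity \eqref{eq:grad_monotony} of $L_v$ along the segment $[\bar v,v]$ turns the bracket into $\tfrac{1}{2C_0}|v-\bar v|^2$, so the left-hand side is bounded below by $\tfrac{1}{2C_0}m|v-\bar v|^2$. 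For the two remaining blocks I would only use convexity: the subgradient inequality for $F$ recorded after \eqref{primitive:F} (with subgradient $\bar\gamma = f(\cdot,\bar m)$), and convexity of $\Phi$ with $\nabla_z\Phi = \phi$ evaluated at $A[\bar w]$, which yields the subgradient $\bar P$ since $\bar P = \bm\phi[A[\bar v\bar m]]$; the terminal term is linear. Summing the three inequalities gives $\mathcal{J}(m,v) - \mathcal{J}(\bar m,\bar v) \geq \ell(\delta m,\delta w) + \tfrac{1}{2C_0}\int_Q m|v-\bar v|^2$, where $\delta m = m-\bar m$, $\delta w = w-\bar w$, and $\ell$ is the linear functional collecting all first-order terms.

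It then remains to prove $\ell(\delta m,\delta w)=0$, which is where the mean field game system enters. Using the Legendre relations $L_v(\bar v) = -(\nabla\bar u + A^\star\bar P)$ and $L(\bar v) - \langle L_v(\bar v),\bar v\rangle = -\bm H[\nabla\bar u + A^\star\bar P]$ (consequences of $\bar v = -\bm H_p[\nabla\bar u+A^\star\bar P]$ and the definition of $H$), together with the adjoint identity $\int_0^T\langle\bar P,A[\delta w]\rangle = \int_Q\langle A^\star\bar P,\delta w\rangle$, the functional $\ell$ rewrites as
\[\ell(\delta m,\delta w) = \int_Q\big(-\bm H[\nabla\bar u+A^\star\bar P]+\bar\gamma\big)\,\delta m - \int_Q\langle\nabla\bar u,\delta w\rangle\,\dd x\,\dd t + \int_{\mathbb T^d} g\,\delta m(T)\,\dd x.\]
The HJB equation \eqref{eq:MFG-gcg} replaces $-\bm H[\nabla\bar u+A^\star\bar P]+\bar\gamma$ by $-\partial_t\bar u-\Delta\bar u$; integrating by parts in $t$ (using $\bar u(\cdot,T)=g$ and $\delta m(\cdot,0)=0$) and in $x$ on the torus, and using that $(\delta m,\delta w)$ solves the homogeneous linearized Fokker--Planck equation $\partial_t\delta m - \Delta\delta m + \nabla\cdot\delta w = 0$, one obtains $\ell(\delta m,\delta w) = \int_Q\bar u\,(\partial_t\delta m-\Delta\delta m+\nabla\cdot\delta w) = 0$. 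Taking $C = 2C_0$ then gives \eqref{estim:quad-v}.

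The main obstacle is the perspective-function step: one must differentiate $\tilde L$ correctly away from $m=0$, confirm the exact collapse of the Bregman remainder to $m$ times the Bregman divergence of $L$, and convert \eqref{eq:grad_monotony} (which as written carries $|v-v'|$ in place of the quadratic $|v-v'|^2$ expected from strong convexity) into the quadratic lower bound. By contrast, the vanishing of $\ell$ is conceptually the verification theorem for \eqref{eq:MFG-gcg} and is routine once the regularity of $\bar u,\bar m,\bar w$ supplied by Theorem \ref{theo:main} and Lemma \ref{lemma:link-sto-edp} is invoked to justify the integrations by parts.
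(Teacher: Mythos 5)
Your proof is correct and follows essentially the same route as the paper: the quadratic term is extracted from the strong convexity of $L$ via a pointwise estimate on the perspective function (your Bregman computation reproduces exactly the inequality the paper imports as Lemma \ref{lemma:H-L-quad} from \cite{BHP-schauder}), the $F$ and $\Phi$ blocks are handled by first-order convexity with subgradients $\bar\gamma$ and $\bar P$, and the linear remainder is annihilated by the HJB equation together with integration by parts against the Fokker--Planck constraint. Your remark that \eqref{eq:grad_monotony} carries $|v-v'|$ where $|v-v'|^2$ is clearly intended is a fair catch of a typo the paper itself implicitly corrects (e.g.\ in deriving \eqref{L_quad_growth1}).
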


\begin{proof}
By \cite[Proposition 2]{BHP-schauder},  we have that $(\bar{m},\bar{v})$ is solution to Problem \eqref{pb:control-alpha-FP}.
By (\ref{eq:MFG-gcg},ii) we have that $\bar{v} = - \bm{H}_p[\nabla \bar{u} + A^\star \bar{P}]$. Then by Lemma \ref{lemma:H-L-quad},
\begin{align} \nonumber
\bm{L}[v](x,t) m(x,t) - \bm{L}[\bar{v}](x,t) \bar{m}(x,t) \geq  - \bm{H}[\nabla \bar{u} + A^\star \bar{P}](x,t) (m(x,t) - \bar{m}(x,t)) \\
- \langle (\nabla \bar{u} + A^\star \bar{P})(x,t),w(x,t) - \bar{w}(x,t) \rangle + \frac{1}{C}|v(x,t) - \bar{v}(x,t)|^2 m(x,t), \label{ineq-conv:L-L-application}
\end{align}
 for all $(x,t) \in Q$, $v \in \mathcal{C}^{1 + \alpha}(Q;\mathbb{R}^d)$ where $(w,\bar{w}) = (mv,\bar{m}\bar{v})$. By (\ref{eq:MFG-gcg},i),
\begin{equation}
\int_Q - \bm{H}[\nabla \bar{u} + A^\star \bar{P}] (m - \bar{m}) \dd x \dd t = \int_Q ( - \partial_t \bar{u} - \Delta \bar{u} - \bar{\gamma})(m - \bar{m})  \dd x \dd t. \label{ev:HJB}
\end{equation}
By (\ref{eq:MFG-gcg},iv) we have that $\bar{\gamma}(x,t) = f(x,t,\bar{m}(t))$ thus by convexity of $\bm{F}$,
\begin{equation} \label{ineq-conv:F-F}
\int_0^T \left(\bm{F}[m] -  \bm{F}[\bar{m}] \right) \dd t \geq \int_Q \bar{\gamma}(m - \bar{m}) \dd x \dd t.
\end{equation}
By (\ref{eq:MFG-gcg},v) we have that $\bar{P} = \bm{\phi}[A\bar{w}]$ thus by convexity of $\bm{\Phi}$,
\begin{equation}
\int_0^T \left( \bm{\Phi}[A[w]] -  \bm{\Phi}[A[\bar{w}]] \right) \dd t \geq  \int_0^T \langle \bar{P}, A[w - \bar{w}] \rangle  \dd t  =  \int_{Q} \langle  A^\star \bar{P}, w - \bar{w} \rangle \dd x \dd t. \label{ineq-conv:phi-phi}
\end{equation}
Combining \eqref{ineq-conv:L-L-application}, \eqref{ev:HJB},  \eqref{ineq-conv:F-F}, and \eqref{ineq-conv:phi-phi} and integrating by parts we obtain that
\begin{align} \nonumber
\mathcal{J}(m,v)  - \mathcal{J}(\bar{m},\bar{v})   \geq &  \int_Q  \left( (\partial_t \bar{u} - \Delta  \bar{u} ) (m - \bar{m}) -  \nabla \bar{u} (w - \bar{w})  \right) \dd x \dd t \\ \nonumber
& + \int_{\mathbb{T}^d} (m(T)-\bar{m}(T)) g \dd x + \frac{1}{C} \int_Q |v - \bar{v}|^2 m \dd x \dd t  \\ \nonumber
 \geq & \int_Q \bar{u} \left( \partial_t (m - \bar{m}) - \Delta (m - \bar{m}) +  \nabla \cdot (w - \bar{w}) \right) \dd x \dd t \\ \nonumber
& + \int_{\mathbb{T}^d} \bar{u}(0) (m(0) - m_0) \dd x+ \frac{1}{C} \int_Q |v - \bar{v}|^2 m \dd x \dd t .
\end{align}
Then \eqref{estim:quad-v} holds since $(m,w)$ and $(\bar{m},\bar{w})$ lie in $\tilde{\mathcal{R}}$.
\end{proof}

We next consider the problem
\begin{equation} \label{pb:control-w-FP} \tag{\~{P}}
\inf_{(m,w) \in \tilde{\mathcal{R}}} \tilde{\mathcal{J}}(m,w) \coloneqq  \int_{Q} \tilde{\bm{L}}[m,w] \dd x \dd t + \int_0^T \left(\bm{F}[m] + \bm{\Phi}[Aw] \right) \dd t + \int_{\mathbb{T}^d} g m(T) \dd x.
\end{equation}

\begin{corollary} \label{corollary:uniq}
Let $(\bar{m},\bar{v}, \bar{u}, \bar{\gamma}, \bar{P})$ be the unique solution to \eqref{eq:MFG-gcg}. Then $(\bar{m},\bar{v})$ is the unique solution to Problem \eqref{pb:control-alpha-FP} and $(\bar{m},\bar{w}) \coloneqq \chi(\bar{m},\bar{v})$ is the unique solution to Problem \eqref{pb:control-w-FP}.
\end{corollary}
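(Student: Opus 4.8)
The plan is to treat the corollary as an essentially immediate consequence of the quadratic growth estimate \eqref{estim:quad-v} established in Lemma \ref{lemma:J-J}, combined with the strict positivity of the density and the bijection $\chi$. Since the right-hand side of \eqref{estim:quad-v} is nonnegative, that inequality already shows that $(\bar m, \bar v)$ minimizes $\mathcal{J}$ over $\mathcal{R}$; what remains is to extract uniqueness from the equality case and then to transport the statement to the $w$-formulation.

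For uniqueness in \eqref{pb:control-alpha-FP}, I would take an arbitrary minimizer $(m', v') \in \mathcal{R}$, so that $\mathcal{J}(m', v') = \mathcal{J}(\bar m, \bar v)$. Applying \eqref{estim:quad-v} to $(m', v')$ gives
\[
0 = \mathcal{J}(m', v') - \mathcal{J}(\bar m, \bar v) \geq \frac{1}{C} \int_Q |v' - \bar v|^2 m' \, \dd x \, \dd t \geq 0,
\]
so the weighted integral vanishes. Because $(m', v') \in \mathcal{R}$ forces $m' = \mathsf{m}[v']$, Lemma \ref{Lemma:m-mapping} ensures that $m' > 0$ on all of $Q$, and therefore $v' = \bar v$ almost everywhere. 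The densities $m'$ and $\bar m$ then solve the same Fokker--Planck equation, with identical drift $v' = \bar v$ and identical initial datum $m_0$; since $\mathsf{m}$ is well defined (single-valued) by Lemma \ref{Lemma:m-mapping}, this yields $m' = \mathsf{m}[\bar v] = \bar m$, hence $(m', v') = (\bar m, \bar v)$, which is the desired uniqueness.

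To conclude for \eqref{pb:control-w-FP}, I would invoke the bijection $\chi \colon \mathcal{R} \to \tilde{\mathcal{R}}$ of Lemma \ref{lemma:bijective-chi} together with the identity $\mathcal{J}(m,v) = \tilde{\mathcal{J}}(\chi(m,v))$, valid for every $(m,v) \in \mathcal{R}$; this follows from Remark \ref{rem:equivalence} (equality of the running costs through the perspective function, together with $A[mv] = A[w]$). A value-preserving bijection maps minimizers to minimizers, so the uniqueness of $(\bar m, \bar v)$ for \eqref{pb:control-alpha-FP} transfers directly, showing that $(\bar m, \bar w) = \chi(\bar m, \bar v)$ is the unique minimizer of $\tilde{\mathcal{J}}$ over $\tilde{\mathcal{R}}$.

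The bulk of the work has in fact already been absorbed into Lemma \ref{lemma:J-J}, so I do not expect a serious obstacle here. The only genuine point requiring care is the strict positivity step $m' > 0$, which is precisely what upgrades the vanishing weighted integral $\int_Q |v' - \bar v|^2 m' \, \dd x \, \dd t = 0$ into pointwise (a.e.) equality of the controls; after that, the observation that within $\mathcal{R}$ uniqueness of the control automatically propagates to uniqueness of the state, via well-posedness of the Fokker--Planck map $\mathsf{m}$, closes the argument.
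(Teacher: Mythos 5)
Your proposal is correct and follows essentially the same route as the paper: it applies the quadratic growth estimate \eqref{estim:quad-v} from Lemma \ref{lemma:J-J} to an arbitrary minimizer, uses the strict positivity of the density to identify the controls, propagates uniqueness to the state via the Fokker--Planck equation, and transfers the result to \eqref{pb:control-w-FP} through the bijection $\chi$ and Remark \ref{rem:equivalence}. No gaps; the paper's own proof is the same argument stated more tersely.
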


\begin{proof}
Let $(m,v), (m',v') \in \mathcal{R}$ be two solutions to Problem \eqref{pb:control-alpha-FP}. Then by Lemma \ref{lemma:J-J} we have $\int_Q |v - v'|^2 m \dd x \dd t = 0$ which yields $v=v'$ since $m$ is positive. Then $m$ and $m'$ are solution to the same Fokker-Planck equation and thus $m = m'$.
Finally, $(\bar{m},\bar{w})$ is the unique solution to \eqref{pb:control-w-FP}, by Remark \ref{rem:equivalence}.
\end{proof}

\begin{lemma} \label{lemma:uniqueness-Z}
Let $\beta \in (0,1)$,
let $(\gamma,P) \in \mathcal{U}^\beta$, let $u = \mathsf{u}[\gamma,P]$ and let $v = - \bm{H}_p[\nabla u + P]$. Then $(\mathsf{m}[v],v)$ is the unique solution to Problem \eqref{pb:individual-control-problem} and $\chi(\mathsf{m}[v],v)$ is the unique solution to Problem \eqref{pb:individual-control-problem-w}.
\end{lemma}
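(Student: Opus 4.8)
The plan is to reproduce the argument of Lemma \ref{lemma:J-J} and Corollary \ref{corollary:uniq} at the level of the individual problem, where it is in fact simpler because here the congestion $\gamma$ and the price $P$ are \emph{fixed data} rather than functionals of the state-control pair. Existence of a minimizer is already granted: by Lemma \ref{lemma:link-sto-edp}, point \ref{point-opti-mv-indiv}, the pair $(\mathsf{m}[v],v)$ solves \eqref{pb:individual-control-problem} and $\chi(\mathsf{m}[v],v)$ solves \eqref{pb:individual-control-problem-w}. It remains to prove uniqueness, and for this I would establish a quadratic growth inequality in the spirit of \eqref{estim:quad-v}. Set $m=\mathsf{m}[v]$ and $(m,w)=\chi(m,v)$; the regularity needed below ($u\in\mathcal{C}^{2+\alpha,1+\alpha/2}(Q)$ and $v,w\in\mathcal{C}^{1+\alpha,\alpha}$, $m\in\mathcal{C}^{2+\alpha,1+\alpha/2}$) is supplied by Lemma \ref{eq:HJB-unique-solution} and Lemma \ref{lemma:link-sto-edp}, point \ref{point:vmw-c-alpha}, which is precisely where the assumption $(\gamma,P)\in\mathcal{U}^\beta$ enters.

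Let $(m',v')\in\mathcal{R}$ be arbitrary and write $(m',w')=\chi(m',v')$. Since $v=-\bm{H}_p[\nabla u + A^\star P]$, the pointwise convexity inequality (Lemma \ref{lemma:H-L-quad}, used exactly as in \eqref{ineq-conv:L-L-application}) gives, at every point of $Q$,
\[
\bm{L}[v']m' - \bm{L}[v]m \geq -\bm{H}[\nabla u + A^\star P](m'-m) - \langle \nabla u + A^\star P, w'-w\rangle + \tfrac1C|v'-v|^2 m'.
\]
I would then integrate this over $Q$ and substitute it into the difference $\mathcal{Z}_{\gamma,P}(m',v')-\mathcal{Z}_{\gamma,P}(m,v)$. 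Here the simplification occurs: the price contribution equals $\int_Q\langle A^\star P, w'-w\rangle\,\dd x\,\dd t$ by the definitions of $A$ and $A^\star$, so it cancels the matching term from the inequality; and the congestion contribution $\int_Q\gamma(m'-m)\,\dd x\,\dd t$ combines with $\int_Q -\bm{H}[\nabla u + A^\star P](m'-m)\,\dd x\,\dd t$ after using the Hamilton--Jacobi--Bellman equation \eqref{eq:HJB}, i.e.\ $-\bm{H}[\nabla u + A^\star P]=-\gamma-\partial_t u-\Delta u$. Unlike in Lemma \ref{lemma:J-J}, no convexity of $F$ or $\Phi$ is invoked; the linear terms cancel directly, leaving
\[
\mathcal{Z}_{\gamma,P}(m',v')-\mathcal{Z}_{\gamma,P}(m,v) \geq \int_Q(-\partial_t u-\Delta u)(m'-m)\,\dd x\,\dd t - \int_Q\langle\nabla u, w'-w\rangle\,\dd x\,\dd t + \int_{\mathbb{T}^d} g\,(m'(T)-m(T))\,\dd x + \tfrac1C\int_Q|v'-v|^2 m'\,\dd x\,\dd t.
\]

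Integrating by parts in the first two integrals and using that $(m',w'),(m,w)\in\tilde{\mathcal{R}}$ satisfy the same Fokker--Planck constraint with the same initial datum, so that $\partial_t(m'-m)-\Delta(m'-m)+\nabla\cdot(w'-w)=0$ and $(m'-m)(0)=0$, together with the terminal condition $u(T)=g$, the boundary terms collapse to $-\int_{\mathbb{T}^d} g\,(m'(T)-m(T))\,\dd x$, which cancels the terminal cost above. This yields the quadratic growth estimate $\mathcal{Z}_{\gamma,P}(m',v')-\mathcal{Z}_{\gamma,P}(m,v)\geq \tfrac1C\int_Q|v'-v|^2 m'\,\dd x\,\dd t$. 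Uniqueness then follows verbatim as in Corollary \ref{corollary:uniq}: if $(m',v')$ is any other solution of \eqref{pb:individual-control-problem}, both sides have equal value, forcing $\int_Q|v'-v|^2 m'\,\dd x\,\dd t=0$; since $m'>0$ on $Q$ (Lemma \ref{Lemma:m-mapping}) we get $v'=v$, and then $m'=m$ because both solve the same Fokker--Planck equation with datum $m_0$. Uniqueness for \eqref{pb:individual-control-problem-w} is immediate from the bijection $\chi$ (Lemma \ref{lemma:bijective-chi}) and Remark \ref{rem:equivalence}. The only genuinely delicate point, as in Lemma \ref{lemma:J-J}, is making the integration by parts rigorous, which relies on the Hölder/Sobolev regularity of $u$, $m$, and $w$ recalled above; the rest is bookkeeping of the (now linear) coupling terms.
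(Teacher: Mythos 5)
Your proposal is correct and follows essentially the same route as the paper: the paper's proof likewise invokes the optimality from Lemma \ref{lemma:link-sto-edp}, establishes the quadratic growth inequality $\mathcal{Z}_{\gamma,P}(m',v')-\mathcal{Z}_{\gamma,P}(\mathsf{m}[v],v)\geq \frac{1}{C}\int_Q|v'-v|^2m'\,\dd x\,\dd t$ by following the proof of Lemma \ref{lemma:J-J}, and concludes by the argument of Corollary \ref{corollary:uniq} together with Remark \ref{rem:equivalence}. You simply carry out the details the paper leaves implicit (correctly noting that convexity of $F$ and $\Phi$ is not needed since $\gamma$ and $P$ are fixed data).
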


\begin{proof}
The optimality of $(\mathsf{m}[v],v)$ and $\chi(\mathsf{m}[v],v)$ has been established in Lemma \ref{lemma:link-sto-edp}. Following the proof of Lemma \ref{lemma:J-J}, one can easily show that
\begin{equation} \nonumber
\mathcal{Z}_{\gamma,P}(m',v') - \mathcal{Z}_{\gamma,P}(\mathsf{m}[v],v) \geq \frac{1}{C} \int_Q |v' - v|^2 m' \dd x \dd t,
\end{equation}
for any $(m',v') \in \mathcal{R}$. Applying the same reasoning as in the proof of Corollary \ref{corollary:uniq} and using Remark \ref{rem:equivalence} allows to conclude the proof.
\end{proof}

\section{Generalized conditional gradient \label{sec:gcg}}

In this section we first present the generalized conditional gradient method in an abstract framework. Then we present a generalized conditional gradient method for our potential mean field game. We show that this procedure is linked with the fictitious play method, a learning procedure. The generalized conditional gradient point of view allows us to link two notions from different areas: the notion of exploitability from game theory and the notion of duality gap defined in (generalized) conditional gradient theory.

\paragraph{Abstract framework}

We present here the main ideas of the generalized conditional gradient method in a finite dimensional setting.
Consider the optimization problem
\begin{equation} \label{eq:pb_abstrait} \tag{P$_f$}
\min_{x \in K} f(x)= f_1(x) + f_2(x),
\end{equation}
where $K$ is a convex and compact subset of $\R^n$ of finite diameter $D$, $f_1$ is a (possibly non-smooth) convex function and $f_2$ a continuous differentiable function with $L$-Lipschitz gradient. 
We consider the mapping $h \colon K \times K \rightarrow \R$ defined by
\begin{equation} \nonumber
h(x,y) = f_1(y)-f_1(x) + \langle \nabla f_2(x),y-x \rangle.
\end{equation}
The mapping $h$ is a kind of first-order approximation of $f(y)-f(x)$, where only $f_2$ is linearized.
 Let $(\delta_{k})_{k \in \mathbb{N}} \in [0,1]$ be a sequence of step sizes.
The method generates iteratively two sequences $(\bar{x}_{k})_{k \in \mathbb{N}}$ and $(x_{k})_{k \in \mathbb{N}}$ in $K$. At iteration $k$, $\bar{x}_{k}$ is available and $(x_{k},\bar{x}_{k+1})$ is obtained as follows:
 \begin{align} \nonumber
& x_{k} \in \argmin_{y \in K} h(\bar{x}_{k},y), \\ \nonumber
& \bar{x}_{k+1} =  (1-\delta_{k}) \bar{x}_{k} + \delta_{k} x_{k}.
\end{align}
We also consider the mapping $\sigma \colon K \rightarrow \R$ defined by
\begin{equation} \nonumber
\sigma(x)= - \min_{y \in K} h(x,y) \geq 0.
\end{equation}
We call $\sigma(x)$ the primal-dual gap at $x\in K$. This terminology is motivated by the following. Consider the Lagrangian $\mathcal{L} \colon K \times \mathbb{R}^d \times \mathbb{R}^d \to \mathbb{R}$, 
\begin{equation} \nonumber
\mathcal{L}(x,y,\lambda) = f_1(x) + f_2(y) + \langle \lambda, x-y \rangle.
\end{equation}
It is easy to verify that \eqref{eq:pb_abstrait} can be formulated as follows:
\begin{equation*}
\inf_{x \in K,\, y \in \R^d} \ \sup_{\lambda \in \R^d}
\mathcal{L}(x,y,\lambda).
\end{equation*}
In particular, for $x \in K$, we have $f(x)= \sup_{\lambda' \in \R^d} \mathcal{L}(x,x,\lambda')$.
The dual problems writes
\begin{equation*}
\sup_{\lambda \in \R^d} \ \inf_{x \in K,\, y \in \R^d}
\mathcal{L}(x,y,\lambda).
\end{equation*}
Given $x \in K$, a candidate for the dual problem is $\lambda= \nabla f_2(x)$. The dual cost is then
\begin{align*}
\inf_{x' \in K,\, y' \in \R^d}
\mathcal{L}(x',y',\lambda)
= \ & \inf_{x' \in K} f_1(x') + \langle \lambda, x' \rangle
+ \inf_{y' \in \R^d} f_2(y') - \langle \nabla f_2(x), y' \rangle \\
= \ & f(x) + \inf_{x' \in K} h(x,x')
= f(x) - \sigma(x).
\end{align*}
Thus $\sigma(x)$ is nothing but the difference between the primal cost at $x$, and the dual cost at $\nabla f_2(x)$.
We will later see that it coincides with the notion of exploitability in the context of mean field games.

Under the previous assumptions, one can show that (see \cite[Lemma 2.4]{rakotomamonjy2015generalized})
\begin{equation} \label{eq:certificate-f}
0 \leq  f(\bar{x}_k) - f(\bar{x}) \leq  \sigma(\bar{x}_k),
\end{equation}
where $\bar{x}$ is a solution to problem \eqref{eq:pb_abstrait}. In words, any point $x \in K$ is $\sigma(x)$-optimal.

\paragraph{Application to potential mean field games}

Our framework is infinite dimensional, we aim at minimizing the potential $\tilde{\mathcal{J}}(m,w)$ under the constraint $(m,w) \in \tilde{\mathcal{R}}$. Following the ideas presented in the previous paragraph, we define a mapping $h : \tilde{\mathcal{R}} \times \tilde{\mathcal{R}} \to \mathbb{R}$,
\begin{align} \notag
 h((m,w),(m',w')) = \ & \tilde{\mathcal{Z}}_{\gamma,P}(m',w') - \tilde{\mathcal{Z}}_{\gamma,P}(m,w)\\  =\ &\int_{Q} \left( \tilde{\bm{L}}[m',w']- \tilde{\bm{L}}[m,w]  \right) \dd x \dd t + \int_{\mathbb{T}^d}g (m'-m)(T) \dd x  \nonumber \\ & + \int_{Q} \gamma (m'-m) \dd x \dd t + \int_0^T \langle A[w'-w], P \rangle \dd t \label{def:h}
\end{align}
where $\gamma(x,t) = f(x,t,m(t))$ and $P(t) = \phi(t,A w (t))$ for any $(x,t) \in Q$. 
By analogy with the previous abstract framework, we can interpret $h((m,w),(m',w'))$ as a partial linearization of $\tilde{\mathcal{J}}(m',w') - \tilde{\mathcal{J}}(m,w)$:
we have a non-linearized part composed of the perspective function $\tilde{\bm{L}}$ (analogous to the term $f_1$) and a linearized part composed of all the other terms (analogous to the term $f_2$): the congestion $\gamma$, the price $P$ and the terminal cost $g$. Two reasons motivates this choice of linearization: 
\begin{enumerate}
\item In general the perspective function $\tilde{\bm{L}}$ is not differentiable.
\item This particular choice of linearization allows to link the generalized conditional gradient method with the fictitious play algorithm, 
as explained in the end of this section.
\end{enumerate}

We define the following generalized conditional gradient algorithm for potential mean field games as follows:

\begin{algorithm}[H]
\caption{Generalized conditional gradient} \label{algo:gcg}
\begin{algorithmic}
\STATE Choose $(\bar{m}_0,\bar{w}_0) \in \mathcal{C}^{2+\alpha,1+ \alpha/2}(Q) \times   \mathcal{C}^{1+ \alpha,\alpha}(Q;\R^d)$ with $\bar{m}_0(x,t) >0$ for any $(x,t) \in Q$ and choose a sequence $(\delta_k)_{k\in \mathbb{N}} \in [0,1]$.
\FOR{$0\leq k < N$} 
\STATE {Find $(m_k,w_k) = \argmin_{(m,w) \in \tilde{\mathcal{R}}} h((\bar{m}_k,\bar{w}_k),(m,w))$}
\STATE {Actualise  $(\bar{m}_{k+1},\bar{w}_{k+1}) = (1- \delta_{k}) (\bar{m}_{k},\bar{w}_{k}) + \delta_{k} (m_{k},w_{k})$}
\ENDFOR
\RETURN {$(\bar{m}_N, \bar{w}_N)$.}
\end{algorithmic}
\end{algorithm}

\begin{figure}[htb]
\centering
\includegraphics[scale=1]{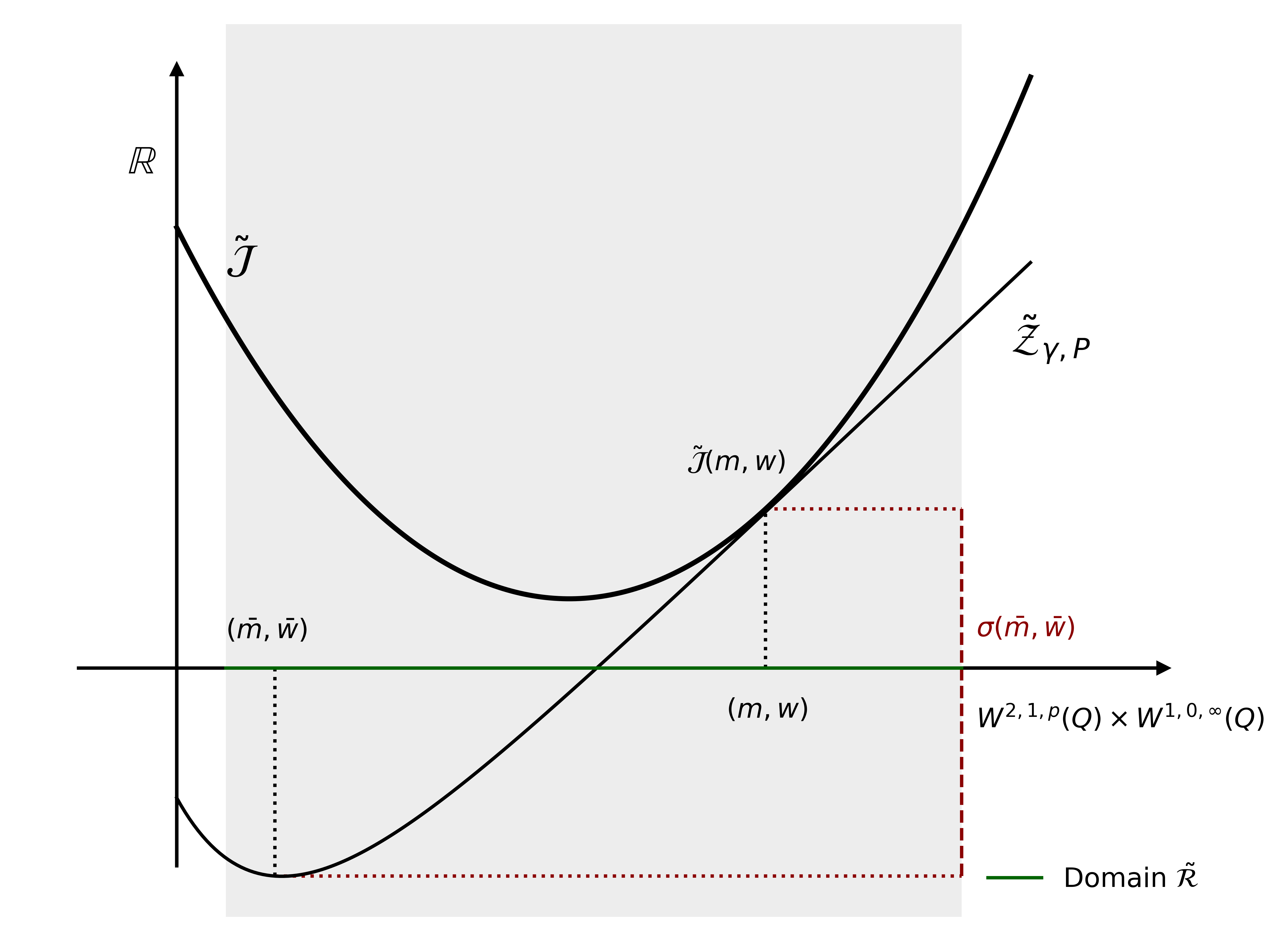} 
\caption{Illustration of the potential cost $\tilde{\mathcal{J}}$, the individual cost $\tilde{\mathcal{Z}}_{\gamma,P}$ and the exploitability $\sigma$.}
\end{figure}

We first justify the well-posedness of the algorithm (in particular, we need to justify the existence and uniqueness of $(m_k,w_k)$).
To this goal, we introduce the following sequences
\begin{equation} \nonumber
\begin{array}{llll}
 P_k(t) & = \phi(t,A \bar{w}_k (t)), & \gamma_k(x,t) & = f(x,t,\bar{m}_k(t)), \\
 u_k(x,t) & = \mathsf{u}[ \gamma_k, P_k](x,t), \qquad & v_k(x,t) & = - \bm{H}_p[ \nabla u_k + A^\star P_k]](x,t),
\end{array}
\end{equation}
for any $(x,t) \in Q$.
For future reference, we define
\begin{equation*}
\bar{v}_k = \bar{w}_k / \bar{m}_k.
\end{equation*}
In the next lemma, we provide an explicit formula to the minimization step, directly derived from Lemma \ref{lemma:link-sto-edp}.

\begin{lemma} \label{lemma:fictitious-play-seq}
For all $k\in \mathbb{N}$, we have $(m_k,w_k) = \chi(\mathsf{m}[v_k],v_k)$. Moreover, there exists $\alpha_k \in (0,1)$ such that
\begin{equation} \label{reg:fictitious-play-seq}
\begin{cases}
\begin{array}{rl}
m_k, \bar{m}_k \in & \mathcal{C}^{2+\alpha_k,1+ \alpha_k /2}(Q), \\
v_k, w_k, \bar{w}_k \in & \mathcal{C}^{1+ \alpha_k,\alpha_k}(Q;\R^d),\\
u_k \in & \mathcal{C}^{2+\alpha_k,1+ \alpha_k/2}(Q), \\
\gamma_k \in & \mathcal{C}^{\alpha_k}(Q), \\
P_k \in & \mathcal{C}^{\alpha_k}(0,T;\R^k).
\end{array}
\end{cases}
\end{equation}
\end{lemma}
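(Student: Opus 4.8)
The plan is to recognize the minimization step of Algorithm~\ref{algo:gcg} as an instance of the individual control problem \eqref{pb:individual-control-problem-w}, to read off the identity $(m_k,w_k)=\chi(\mathsf m[v_k],v_k)$ from the uniqueness result of Section~\ref{section:MFG}, and to obtain the regularity statement by induction on $k$, feeding the iterates back into the estimates of Section~\ref{sec:stochastic-pb}.

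First I would identify the \emph{argmin}. By the very definition \eqref{def:h} of $h$, when the first argument is $(\bar m_k,\bar w_k)$ the coupling terms entering $\tilde{\mathcal Z}$ are exactly $\gamma(x,t)=f(x,t,\bar m_k(t))=\gamma_k(x,t)$ and $P(t)=\phi(t,A\bar w_k(t))=P_k(t)$. Hence
\[
h\bigl((\bar m_k,\bar w_k),(m,w)\bigr)=\tilde{\mathcal Z}_{\gamma_k,P_k}(m,w)-\tilde{\mathcal Z}_{\gamma_k,P_k}(\bar m_k,\bar w_k),
\]
and since the subtracted term is constant in $(m,w)$, minimizing $h((\bar m_k,\bar w_k),\cdot)$ over $\tilde{\mathcal R}$ coincides with solving \eqref{pb:individual-control-problem-w} with data $(\gamma_k,P_k)$. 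Lemma~\ref{lemma:uniqueness-Z}, applied with $(\gamma,P)=(\gamma_k,P_k)$, then says this problem has the unique solution $\chi(\mathsf m[v_k],v_k)$, which gives $(m_k,w_k)=\chi(\mathsf m[v_k],v_k)$.

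This step presupposes $(\gamma_k,P_k)\in\mathcal U^\beta$ for some $\beta$, which rests on the regularity of $(\bar m_k,\bar w_k)$; I would therefore prove both assertions simultaneously by induction on $k$. The base case $k=0$ holds by the choice of $(\bar m_0,\bar w_0)$ in Algorithm~\ref{algo:gcg}, with $\bar m_0>0$ and $\bar m_0(\cdot,t)\in\mathcal D_1(\mathbb T^d)$ by mass conservation. For the inductive step, assuming $(\bar m_k,\bar w_k)$ satisfies \eqref{reg:fictitious-play-seq} with $\bar m_k>0$ and unit mass, I would: (a) check that $(\gamma_k,P_k)\in\mathcal U^\beta$ and satisfies \eqref{ineq:assumption-lemma-gamma-P} — the $L^\infty$ bounds $\|\gamma_k\|_\infty\le C_0$, $\|\nabla\gamma_k\|_\infty\le C_0$, $\|P_k\|_\infty\le C_0$ follow from \eqref{ass_f_bounded}, \eqref{ass_hold_f} and \eqref{ass_psi_bounded}, so that \eqref{ineq:assumption-lemma-gamma-P} holds with an $R$ independent of $k$, while the Hölder regularity follows by composing \eqref{ass_hold_f}, \eqref{ass_Lip_phi}, \eqref{ass_holder} with the inductive regularity of $\bar m_k,\bar w_k$ and that of $a$; (b) invoke Lemmas~\ref{eq:HJB-unique-solution} and~\ref{lemma:link-sto-edp}(point~\ref{point:vmw-c-alpha}) to produce the exponent $\alpha_k$ and the claimed Hölder regularity of $u_k$, $v_k$, $m_k=\mathsf m[v_k]$ and $w_k$, with $m_k>0$ by Lemma~\ref{Lemma:m-mapping} and $\int_{\mathbb T^d}m_k(x,t)\,\dd x=1$ by mass conservation of the Fokker--Planck equation; (c) conclude that $(\bar m_{k+1},\bar w_{k+1})=(1-\delta_k)(\bar m_k,\bar w_k)+\delta_k(m_k,w_k)$, as a convex combination of functions with the required regularity, inherits it with $\alpha_{k+1}$ the minimum of the exponents involved, remains positive since $\delta_k\in[0,1]$ and both summands are positive, and keeps unit mass. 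Note that the identification $(m_k,w_k)=\chi(\mathsf m[v_k],v_k)$ is established \emph{inside} this induction, precisely because it needs $(\gamma_k,P_k)\in\mathcal U^\beta$.

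The main obstacle I expect is step~(a): showing that the compositions $\gamma_k=f(\cdot,\cdot,\bar m_k(\cdot))$ and $P_k=\phi(\cdot,A\bar w_k(\cdot))$ land in $\mathcal U^\beta$. The temporal Hölder continuity of $\gamma_k$ must be extracted from \eqref{ass_hold_f}, using that $t\mapsto\bar m_k(\cdot,t)$ is continuous into $L^2(\mathbb T^d)$ with a Hölder modulus, and that of $P_k$ from \eqref{ass_Lip_phi} together with the Hölder continuity of $t\mapsto A\bar w_k(t)=\int_{\mathbb T^d}a(x,t)\bar w_k(x,t)\,\dd x$, which combines \eqref{ass_holder} for $a$ with the inductive regularity of $\bar w_k$. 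Carefully tracking how the exponent $\beta=\beta_k$ degrades along these compositions — and hence may shrink with $k$ — is the delicate bookkeeping here, and it is exactly what the $k$-dependence of $\alpha_k$ in the statement is designed to absorb; the uniformity of $R$ in \eqref{ineq:assumption-lemma-gamma-P}, by contrast, is what will later keep the constants in point~\ref{point:mvw-W-21p} of Lemma~\ref{lemma:link-sto-edp} independent of $k$.
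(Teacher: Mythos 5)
Your proposal is correct and follows essentially the same route as the paper's proof: an induction on $k$ in which the inductive regularity of $(\bar m_k,\bar w_k)$ yields $(\gamma_k,P_k)\in\mathcal U^\beta$ via \eqref{ass_hold_f}, \eqref{ass_Lip_phi}, \eqref{ass_holder}, then Lemma \ref{eq:HJB-unique-solution} gives the regularity of $u_k$, Lemmas \ref{lemma:link-sto-edp} and \ref{lemma:uniqueness-Z} identify $(m_k,w_k)=\chi(\mathsf m[v_k],v_k)$ as the unique minimizer of $h((\bar m_k,\bar w_k),\cdot)$ and give the regularity of $v_k,m_k,w_k$, and the convex-combination step closes the induction. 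The extra observations on positivity, mass conservation and the uniformity of $R$ are not needed for this statement (they belong to Lemma \ref{bound:seq}) but are harmless.
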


\begin{proof}
We prove the result by induction.
Let $k \in \mathbb{N}$. Assume that there exists $\alpha \in (0,1)$ such that $\bar{m}_k \in \mathcal{C}^{2+\alpha,1+ \alpha/2}(Q)$, $\bar{w}_k \in \mathcal{C}^{1+ \alpha,\alpha}(Q;\R^d)$.

\noindent \textit{Step 1: $P_k \in  \mathcal{C}^{\alpha}(0,T;\R^k)$ and $\gamma_k \in \mathcal{C}^{\alpha}(Q)$.} By assumptions \eqref{ass_holder} and \eqref{ass_Lip_phi},
\begin{align} \nonumber
|P_k(t_2) - P_k(t_1) |  & =  |\phi[A\bar{w}_k](t_2) -  \phi[A\bar{w}_k](t_1)|  \\ \nonumber & \leq C\left( |t_2 - t_1|^\alpha + |A\bar{w}_k(t_2)  - A \bar{w}_k(t_1)| \right)\\ \nonumber
& \leq  C \left(|t_2 - t_1|^\alpha + \| a \|_{L^\infty(Q;\mathbb{R}^{k\times d})} \int_{\mathbb{T}^d} | \bar{w}_k(t_2)  - \bar{w}_k(t_1)|\dd x\right),
\end{align}
for all $t_1,t_2 \in [0,T]$.
It follows that $P_k$ is H\"older continuous, since by induction assumption, $\bar{w}_k \in \mathcal{C}^{1+ \alpha,\alpha}(Q;\R^d)$.
The announced regularity on $\gamma_k$ is a direct consequence of the induction assumption ($\bar{m}_k \in \mathcal{C}^{2+\alpha,1+ \alpha/2}(Q)$) and Assumption \eqref{ass_hold_f}.

\noindent \textit{Step 2: $u_k \in \mathcal{C}^{2+\alpha,1 + \alpha/2}(Q)$.} The regularity of $P_k$ and $\gamma_k$ obtained in the previous steps allows us to apply \ref{eq:HJB-unique-solution}, which yields the announced regularity on $u_k$.

\noindent \textit{Step 3: $(m_k,w_k) = \chi(\mathsf{m}[v_k],v_k)$}.
By Lemma \ref{lemma:link-sto-edp} and Lemma \ref{lemma:uniqueness-Z}, $\chi(\mathsf{m}[v_k],v_k)$ is the unique minimizer of $\mathcal{Z}_{\gamma_k,P_k}$, thus the unique minimizer of $h((\bar m_k, \bar w_k), \cdot )$ on $\tilde{\mathcal{R}}$.

\noindent \textit{Step 4: $v_k \in \mathcal{C}^{1+ \alpha,\alpha}(Q;\R^d)$,  $m_k \in \mathcal{C}^{2+\alpha,1+ \alpha/2}(Q)$, and $w_k \in \mathcal{C}^{1+\alpha,\alpha}(Q;\R^d)$.} 
Direct consequence of the previous steps and Point \ref{point:vmw-c-alpha} of Lemma \ref{lemma:link-sto-edp}.

\noindent \textit{Conclusion.}
By Step 4 and by the induction assumption, we have that $(\bar{m}_{k+1},\bar{w}_{k+1}) \in \mathcal{C}^{2+\alpha,1+ \alpha/2}(Q) \times \mathcal{C}^{1+\alpha,\alpha}(Q;\R^d)$. Thus the induction assumption holds at $k+1$, which concludes the proof.
\end{proof}

\paragraph{Link with the fictitious play}

Let us consider the primal-dual gap
\begin{equation} \label{eq:def_exploitability}
\sigma_k
= - \min_{(m,w) \in \tilde{\mathcal{R}}}  h((\bar m_k, \bar w_k),(m,w)).
\end{equation}
As mentioned earlier, $\sigma_k$ is a primal gap certificate; it provides us with an upper bound of $\tilde{\mathcal{J}}(\bar m_k, \bar w_k)-\tilde{\mathcal{J}}(\bar m, \bar w)$ (this will be proved in Lemma \ref{eq:seq_gamma}). In the current mean field game context, it coincides with the notion of exploitability: it is the largest decrease in cost that a representative agent can reach by playing its best response, assuming that all other agents use the feedback $\bar{v}_k:= \bar{w}_k / \bar{m}_k$.
Indeed, we have
\begin{align*} \nonumber
\sigma_k = \ &
\tilde{\mathcal{Z}}_{\gamma_k,P_k}(\bar m_k, \bar w_k) - \inf_{(m,w) \in \tilde{\mathcal{R}}} \tilde{\mathcal{Z}}_{\gamma_k,P_k}(m,w) \\
= \ &
Z_{\gamma_k,P_k}(\nu^{\bar{v}_k}) - \inf_{\nu \in L_{\mathbb{F}}^2(t,T;\mathbb{R}^d)} Z_{\gamma_k,P_k}(\nu),
\end{align*}
by Lemma \ref{lemma:link-sto-edp} and Lemma \ref{lemma:fictitious-play-seq}.


We provide now an interpretation of the generalized gradient algorithm as a learning procedure called fictitious play. A definition and a study of the latter learning algorithm in the context of mean field games can be found in \cite{cardaliaguet2017learning,HADIKHANLOO2019369}.
Each iteration $k$ of Algorithm \ref{algo:gcg} relies on the following steps:

For $k \in \mathbb{N}$ let $(\bar{m}_k,\bar{w}_k)$ be a given belief and $\gamma_k$ and $P_k$ the resulting beliefs on congestion and price. Then there are four main steps:

\begin{enumerate}
\item Given $(\bar m_k, \bar w_k)$ compute the congestion terms $P_k$ and $\gamma_k$. In words, the agents make a prediction of the congestion term and the price at equilibrium, based on the belief $(\bar m_k, \bar w_k)$. 

\item Find the value function $u_k$ solution to the Hamilton-Jacobi-Bellman equation parametrized by $(\gamma_k,P_k)$. Then compute the optimal control $v_k$, given the value function $u_k$ and the price $P_k$. 
This step can be interpreted as follows: for a given belief on the distributions of the others $(m_k,w_k)$, a representative agent computes its best response $v_k$.
\item Find the solution $m_k$ to the Fokker-Planck equation for the given drift $v_k$ and compute the associated distribution of controls $w_k$.
\item The actualization step of  $(\bar{m}_{k+1},\bar{w}_{k+1})$ can be interpreted as a learning step. The learning rule consists in averaging the past realizations of the distribution and flow at a rate determined by the sequence $(\delta_k)_{k \in \mathbb{N}}$. 
\end{enumerate}

\section{Convergence Results \label{sec:convergence}}

In this section, the generic constants $C$ and $\alpha$ depend on the data of the problem (introduced in Section \ref{subsec:coupled_system}) and depend on the pair $(\bar{m}_0,\bar{w}_0)$ chosen to initialize Algorithm \ref{algo:gcg}.

\begin{lemma} \label{bound:seq}
There exists $C > 0$ such that for any $k \in \mathbb{N}$,
\begin{equation} \nonumber
\begin{array}{llll}
\| \gamma_k \|_{W^{1,0,\infty}(Q;\R^d)} & \leq C \qquad \qquad
& \| m_k \|_{W^{2,1,p}(Q)} & \leq C \\
\| P_k \|_{L^{\infty}(0,T;\R^k)} & \leq C
& \| w_k \|_{W^{1,0,\infty}(Q;\R^d)} & \leq C \\
\| u_k \|_{W^{2,1,p}(Q)} & \leq C
& \| \bar{m}_k \|_{W^{2,1,p}(Q)} & \leq C \\
\| \nabla u_k \|_{W^{2,1,p}(Q;\R^d)} & \leq C
& \| \bar{w}_k \|_{W^{1,0,\infty}(Q;\R^d)} & \leq C. \\
\| v_k \|_{W^{1,0,\infty}(Q;\R^d)} & \leq C \\
\end{array}
\end{equation}
In addition, we have
\begin{equation} \nonumber
m_k(x,t) \geq 1/C, \quad \bar{m}_k(x,t) \geq 1/C, \quad \| \bar{v}_k \|_{L^\infty(Q;\mathbb{R}^d)} \leq C, \quad 
\end{equation}
for all $(x,t) \in Q$.
\end{lemma}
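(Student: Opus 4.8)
The plan is to exploit the fact, already built into the cited regularity results, that every constant produced by Lemma~\ref{eq:HJB-unique-solution} and by Point~\ref{point:mvw-W-21p} of Lemma~\ref{lemma:link-sto-edp} depends on the data only through the radius $R$ of \eqref{ineq:assumption-lemma-gamma-P}. Consequently it suffices to exhibit a \emph{single} $R$, independent of $k$, that bounds $\|\gamma_k\|_{L^\infty(Q)}+\|\nabla\gamma_k\|_{L^\infty(Q;\R^d)}+\|P_k\|_{L^\infty(0,T;\R^k)}$, and then to propagate the resulting estimates through the affine update of Algorithm~\ref{algo:gcg}. I would argue by induction on $k$, the induction hypothesis being that $(\bar m_k,\bar w_k)\in\tilde{\mathcal{R}}$ (so that in particular $\bar m_k(\cdot,t)\in\mathcal{D}_1(\mathbb{T}^d)$ for every $t$), together with the regularity already granted by Lemma~\ref{lemma:fictitious-play-seq}.

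First I would bound the coupling terms uniformly in $k$. The estimate $\|P_k\|_{L^\infty(0,T;\R^k)}\le C_0$ is immediate from \eqref{ass_psi_bounded}, and $\|\gamma_k\|_{L^\infty(Q)}\le C_0$ follows from \eqref{ass_f_bounded}, which applies precisely because $\bar m_k(t)\in\mathcal{D}_1(\mathbb{T}^d)$. The only delicate point is the gradient bound: freezing the density and using \eqref{ass_hold_f} with $m_1=m_2=\bar m_k(t)$ gives $|\gamma_k(x_2,t)-\gamma_k(x_1,t)|=|f(x_2,t,\bar m_k(t))-f(x_1,t,\bar m_k(t))|\le C_0|x_2-x_1|$, so $\gamma_k$ is Lipschitz in $x$ with a constant independent of $k$, whence $\|\nabla\gamma_k\|_{L^\infty(Q;\R^d)}\le C_0$. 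Taking $R:=3C_0$ thus makes \eqref{ineq:assumption-lemma-gamma-P} hold for every $k$.

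With this uniform $R$ I would run the chain of implications. Lemma~\ref{eq:HJB-unique-solution} yields $\|u_k\|_{W^{2,1,p}(Q)}+\|\nabla u_k\|_{W^{2,1,p}(Q)}\le C$ with $C=C(R)$, and then Point~\ref{point:mvw-W-21p} of Lemma~\ref{lemma:link-sto-edp} provides the uniform bounds on $\|v_k\|_{W^{1,0,\infty}}$, $\|m_k\|_{W^{2,1,p}}$ and $\|w_k\|_{W^{1,0,\infty}}$. The averaged iterates are then controlled by convexity of the norms: writing $\bar m_{k+1}=(1-\delta_k)\bar m_k+\delta_k m_k$ with $\delta_k\in[0,1]$, the triangle inequality gives $\|\bar m_{k+1}\|_{W^{2,1,p}}\le\max\bigl(\|\bar m_k\|_{W^{2,1,p}},\|m_k\|_{W^{2,1,p}}\bigr)$, so an immediate induction bounds $\|\bar m_k\|_{W^{2,1,p}}$ by $\max(\|\bar m_0\|_{W^{2,1,p}},C)$, and likewise for $\|\bar w_k\|_{W^{1,0,\infty}}$. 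Since each $(m_k,w_k)=\chi(\mathsf{m}[v_k],v_k)\in\tilde{\mathcal{R}}$ (Lemmas~\ref{lemma:bijective-chi} and~\ref{lemma:fictitious-play-seq}) and $\tilde{\mathcal{R}}$ is an affine set, the convex combination $(\bar m_{k+1},\bar w_{k+1})$ again lies in $\tilde{\mathcal{R}}$; in particular $\bar m_{k+1}(t)\in\mathcal{D}_1(\mathbb{T}^d)$, which closes the induction hypothesis.

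Finally I would establish the lower bounds. By the minimum principle for the Fokker--Planck equation (Lemma~\ref{lemma:maximum-principle}, already invoked for Lemma~\ref{Lemma:m-mapping}), $m_k$ is bounded below by a positive constant depending only on the initial lower bound $\varepsilon_0$ of $m_0$ and on $\|v_k\|_{L^\infty}$; as the latter is uniformly bounded, $m_k\ge 1/C$ with $C$ independent of $k$. Since $\bar m_0$ is continuous and strictly positive on the compact set $Q$, it is bounded below by some $\varepsilon>0$, and a convex combination of functions bounded below by $\min(\varepsilon,1/C)$ preserves that bound, giving $\bar m_k\ge 1/C$ for all $k$. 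The estimate on $\bar v_k=\bar w_k/\bar m_k$ then follows from $\|\bar v_k\|_{L^\infty(Q;\R^d)}\le\|\bar w_k\|_{L^\infty}/\inf_Q\bar m_k\le C$. The main obstacle is the gradient bound $\|\nabla\gamma_k\|_{L^\infty}$: the whole argument hinges on recognizing that \eqref{ass_hold_f} furnishes a modulus of continuity in $x$ that is uniform with respect to the density argument, so that $R$ can be fixed once and for all; the remaining estimates are then a mechanical propagation of $R$-dependent constants through the affine recursion.
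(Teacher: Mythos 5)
Your proposal is correct and follows essentially the same route as the paper: uniform bounds on $(\gamma_k,P_k)$ from \eqref{ass_psi_bounded}, \eqref{ass_f_bounded}, \eqref{ass_hold_f} give a $k$-independent radius $R$, which is then propagated through Lemma \ref{eq:HJB-unique-solution} and Point \ref{point:mvw-W-21p} of Lemma \ref{lemma:link-sto-edp}, with the averaged iterates and the lower bounds handled by the convex-combination structure and the maximum principle of Lemma \ref{lemma:maximum-principle}, exactly as in the paper's Steps 1--6. Your explicit justification of the gradient bound $\|\nabla\gamma_k\|_{L^\infty}\le C_0$ via the $x$-Lipschitz part of \eqref{ass_hold_f} is in fact slightly more detailed than the paper's.
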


\begin{proof}
Let $k \in \mathbb{N}$.
Assume that there exists $C>0$ such that the bounds hold for all $i \in \left\{0,\ldots,k-1\right\}$.

\noindent \textit{Step 1: Bounds of $\gamma_k$ and $P_k$.}  These bounds directly follow from assumptions \eqref{ass_psi_bounded}, \eqref{ass_f_bounded}, and \eqref{ass_hold_f}.
They imply the existence of $C > 0$ such that
\begin{equation*}
\|\gamma_k\|_{L^\infty(Q)} + \|\nabla \gamma_k\|_{L^\infty(Q;\mathbb{R}^d)}  + \|P_k\|_{L^\infty(0,T;\mathbb{R}^k)} \leq C,
\end{equation*}
so that we can employ the technical Lemmas of Section \ref{sec:stochastic-pb} to prove the other announced bounds.

\noindent \textit{Step 2: Bounds of $u_k$ and $\nabla u_k$.}  Direct consequence of Step 1 and Lemma \ref{eq:HJB-unique-solution}.

\noindent \textit{Step 3: Bounds of $v_k$, $m_k$ and $w_k$.}  Direct consequence of the previous steps and Point \ref{point:mvw-W-21p} of Lemma \ref{lemma:link-sto-edp}.

\noindent \textit{Step 4:} Bounds of $\bar{m}_k$ and $\bar{w}_k$.
This is a direct consequence of the fact that $(\bar{m}_k,\bar{w}_k)$ can be expressed as a convex combination of $(m_k, w_k)_{i=0,...,k-1}$ and $(\bar{m}_0,\bar{w}_0)$.

\noindent \textit{Step 5: $m_k(x,t), \bar{m}_k(x,t) \geq 1/C$ for any $(x,t) \in Q$.}
Since $m_k = \mathsf{m}[v_k]$ with $\| v_k \|_{W^{1,0,\infty}(Q;\R^d)} \leq C$ and $m_0(x) \geq \varepsilon_0$ for any $x \in \mathbb{T}^d$, therefore  $m_k(x,t)\geq 1/C$ by Lemma \ref{lemma:maximum-principle}. Then $\bar{m}_k(x,t) \geq 1/C$ as a convex combination of $(m_k)_{i=0,...,k-1}$.

\noindent \textit{Step 6: $\| \bar{v}_k \|_{L^\infty(Q;\mathbb{R}^d)} \leq C$.} By Step 4 and Step 5,
\begin{equation} \nonumber
\| \bar{v}_k \|_{L^\infty(Q;\mathbb{R}^d)}  = \left\| \bar{w}_k / \bar{m}_k \right\|_{L^\infty(Q;\mathbb{R}^d)} \leq C.
\end{equation} 

\noindent \textit{Conclusion.}
 Since  $(\bar{m}_0,\bar{w}_0) \in \mathcal{C}^{2+\alpha,1+ \alpha/2}(Q) \times   \mathcal{C}^{1+ \alpha,\alpha}(Q;\R^d)$ with $\bar{m}_0(x,t) >0$ for any $(x,t) \in Q$, the conclusion follows by induction.
\end{proof}

Recall the definition of the exploitability $\sigma_k$, given in \eqref{eq:def_exploitability}.
We define the sequence of primal gaps $(\epsilon_{k})_{k \in \mathbb{N}}$ as follows
\begin{equation} \nonumber \epsilon_{k} = \tilde{\mathcal{J}}(\bar{m}_k,\bar{w}_k) - \tilde{\mathcal{J}}(\bar{m},\bar{w}).
\end{equation}
We recall that $(\bar{m},\bar{w}) = \argmin_{(m,w) \in \tilde{\mathcal{R}}} \tilde{\mathcal{J}}(m,w)$. The following Lemma is a certificate result, similar to inequality \eqref{eq:certificate-f}.

\begin{lemma} \label{eq:seq_gamma}
We have that $\epsilon_{k} \leq \sigma_{k}$.
\end{lemma}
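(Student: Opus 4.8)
The plan is to transcribe the abstract certificate inequality \eqref{eq:certificate-f} into the present infinite-dimensional setting. The key observation is that the mapping $h$ from \eqref{def:h} is exactly the partial linearization associated with the splitting $\tilde{\mathcal{J}} = f_1 + f_2$, where $f_1(m,w) = \int_Q \tilde{\bm{L}}[m,w]\,\dd x\,\dd t$ is the non-linearized perspective term and $f_2(m,w) = \int_0^T(\bm{F}[m] + \bm{\Phi}[Aw])\,\dd t + \int_{\mathbb{T}^d} g\,m(T)\,\dd x$ is the convex differentiable part, whose gradient at $(\bar m_k,\bar w_k)$ produces precisely the congestion $\gamma_k$, the price $P_k$, and the terminal cost $g$. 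Matching $\nabla f_2(\bar m_k,\bar w_k)$ to the triple $(\gamma_k, A^\star P_k, g)$ is the only slightly delicate bookkeeping step, and it is what makes the convexity inequalities line up correctly below.

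First I would use that $(\bar m,\bar w) \in \tilde{\mathcal{R}}$, which holds by Corollary \ref{corollary:uniq} since it is the unique minimizer of $\tilde{\mathcal{J}}$ over $\tilde{\mathcal{R}}$. Feeding this feasible competitor into the minimization defining $\sigma_k$ gives $\sigma_k \geq -h((\bar m_k,\bar w_k),(\bar m,\bar w))$. Expanding the right-hand side via \eqref{def:h} separates it into the perspective-function difference $\int_Q(\tilde{\bm{L}}[\bar m_k,\bar w_k] - \tilde{\bm{L}}[\bar m,\bar w])\,\dd x\,\dd t$, the congestion term $-\int_Q \gamma_k(\bar m - \bar m_k)\,\dd x\,\dd t$, the price term $-\int_0^T \langle A[\bar w - \bar w_k], P_k\rangle\,\dd t$, and the terminal term $\int_{\mathbb{T}^d} g(\bar m_k - \bar m)(T)\,\dd x$.

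Then I would convert the two linear terms into potential differences by convexity, exactly as in the proof of Lemma \ref{lemma:J-J}. Since $\gamma_k = f(\cdot,\cdot,\bar m_k)$ and $F$ is a convex primitive of $f$, convexity of $\bm{F}$ yields $-\int_Q \gamma_k(\bar m - \bar m_k)\,\dd x\,\dd t \geq \int_0^T (\bm{F}[\bar m_k] - \bm{F}[\bar m])\,\dd t$; likewise, since $P_k = \phi(\cdot, A\bar w_k) = \nabla_z\Phi(\cdot, A\bar w_k)$ and $\Phi$ is convex in its second variable, I obtain $-\int_0^T \langle A[\bar w - \bar w_k], P_k\rangle\,\dd t \geq \int_0^T(\bm{\Phi}[A\bar w_k] - \bm{\Phi}[A\bar w])\,\dd t$. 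Substituting these two bounds recombines every term into $\tilde{\mathcal{J}}(\bar m_k,\bar w_k) - \tilde{\mathcal{J}}(\bar m,\bar w) = \epsilon_k$, which is the desired inequality $\sigma_k \geq \epsilon_k$.

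The argument is essentially routine given the earlier results: it is the infinite-dimensional transcription of the known abstract certificate, and the two convexity inequalities are the same ones already deployed in Lemma \ref{lemma:J-J}. The only points needing care are the verification that $(\bar m,\bar w)$ is feasible for the $\sigma_k$ problem and that all integrals are finite, both guaranteed by the regularity in Lemma \ref{lemma:fictitious-play-seq} and the uniform bounds in Lemma \ref{bound:seq}. I do not anticipate a genuine obstacle beyond making the gradient identification of $f_2$ precise.
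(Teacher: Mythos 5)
Your proof is correct and is essentially the paper's own argument: both rest on the same two convexity inequalities for $\bm{F}$ and $\bm{\Phi}$ that convert the linearized congestion and price terms in $h$ into potential differences, so that $-h((\bar m_k,\bar w_k),(m,w)) \geq \tilde{\mathcal{J}}(\bar m_k,\bar w_k)-\tilde{\mathcal{J}}(m,w)$. The only cosmetic difference is that you specialize immediately to the competitor $(\bar m,\bar w)$, whereas the paper keeps $(m,w)$ generic and takes infima at the end.
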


\begin{proof}
For any $(m,w) \in \tilde{\mathcal{R}}$ we have that
\begin{align} \nonumber
h((\bar{m}_{k},\bar{w}_{k}), (m,w))  = \tilde{\mathcal{J}}(m,w) -  \tilde{\mathcal{J}}(\bar{m}_{k},\bar{w}_{k}) + a + b,
\end{align}
where
\begin{align}
a & =  \int_0^T \bm{F}[\bar{m}_{k}] -  \bm{F}[m]  \dd t  +  \int_{Q} f(x,t,\bar{m}_k(t)) (m(x,t) - \bar{m}_k(x,t)) \dd x \dd t \leq 0,  \nonumber\\
b & = \int_0^T \bm{\Phi}[A \bar{w}_{k}]  -  \bm{\Phi}[Aw] \dd t + \int_0^T \langle \phi(t,A w_k(t)) , A[ w - \bar{w}_k](t) \rangle  \dd t \leq 0, \nonumber
\end{align}
by convexity of $\bm{F}$ and $\bm{\Phi}$. Then we have that
\begin{equation}
\inf_{(m,w) \in \tilde{\mathcal{R}}} h((\bar{m}_{k},\bar{w}_{k}), (m,w))   \leq \inf_{(m,w) \in \tilde{\mathcal{R}}}  \tilde{\mathcal{J}}(m,w) - \tilde{\mathcal{J}}(\bar{m}_{k},\bar{w}_{k}),
\end{equation}
and the conclusion follows.
\end{proof}

\begin{lemma} \label{lemma:upper_bound}
There exists $C>0$ such that for any $\delta \in [0,1]$, it holds:
\begin{equation} \label{eq:upper_bound}
\tilde{\mathcal{J}}(\bar{m}_k^\delta,\bar{w}_k^\delta) \leq \tilde{\mathcal{J}}(\bar{m}_{k},\bar{w}_{k}) - \delta \sigma_k + \delta^2 C,
\end{equation}
where $(\bar{m}_k^{\delta},\bar{w}_k^{\delta}) = \delta (m_k,w_k) + (1-\delta) (\bar{m}_k,\bar{w}_k)$.
\end{lemma}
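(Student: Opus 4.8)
The plan is to reproduce the classical descent estimate of the (generalized) conditional gradient method, exploiting that $\tilde{\mathcal{J}}$ splits into a convex, non-smooth part $\int_Q \tilde{\bm{L}}[m,w]\dd x\dd t$ and a smooth part $\int_0^T(\bm{F}[m]+\bm{\Phi}[Aw])\dd t + \int_{\mathbb{T}^d} g\,m(T)\dd x$ whose constituents all possess Lipschitz gradients. Writing $(\bar m_k^\delta,\bar w_k^\delta)=(1-\delta)(\bar m_k,\bar w_k)+\delta(m_k,w_k)$ and noting that every increment is proportional to $\delta$, namely $\bar m_k^\delta - \bar m_k = \delta(m_k-\bar m_k)$ and likewise for $w$, I would bound $\tilde{\mathcal{J}}(\bar m_k^\delta,\bar w_k^\delta)-\tilde{\mathcal{J}}(\bar m_k,\bar w_k)$ term by term, collect all $O(\delta)$ contributions, recognize that they assemble exactly into $\delta\, h((\bar m_k,\bar w_k),(m_k,w_k)) = -\delta\sigma_k$, and show that all $O(\delta^2)$ remainders are controlled by a single constant $C$.

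For the perspective term I would use only the convexity of $\tilde L$ recorded after \eqref{def:perspective-function}, namely $\tilde{\bm{L}}[\bar m_k^\delta,\bar w_k^\delta]\le(1-\delta)\tilde{\bm{L}}[\bar m_k,\bar w_k]+\delta\,\tilde{\bm{L}}[m_k,w_k]$, so that after integration this part contributes at most $\delta\int_Q(\tilde{\bm{L}}[m_k,w_k]-\tilde{\bm{L}}[\bar m_k,\bar w_k])\dd x\dd t$ with no quadratic remainder. The terminal cost is linear in $m(T)$, hence contributes exactly $\delta\int_{\mathbb{T}^d} g(m_k-\bar m_k)(T)\dd x$, again with no remainder.

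The congestion and price terms are where the quadratic remainder appears. From the primitive formula \eqref{primitive:F} together with \eqref{ass_hold_f}, which makes $m\mapsto f(\cdot,t,m)$ Lipschitz from $L^2(\mathbb{T}^d)$ to itself (the torus having unit measure), I would derive the descent inequality $F(t,m_2)\le F(t,m_1)+\int_{\mathbb{T}^d} f(x,t,m_1)(m_2-m_1)\dd x+\tfrac{C_0}{2}\|m_2-m_1\|_{L^2}^2$; applied with $m_2=\bar m_k^\delta(t)$ and $m_1=\bar m_k(t)$ this produces the first-order term $\delta\int_Q\gamma_k(m_k-\bar m_k)\dd x\dd t$ and a remainder $\tfrac{C_0}{2}\delta^2\int_0^T\|m_k-\bar m_k\|_{L^2}^2\dd t$. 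Likewise, \eqref{ass_Lip_phi} yields the analogous descent inequality for $\Phi$, giving the first-order term $\delta\int_0^T\langle P_k,A[w_k-\bar w_k]\rangle\dd t$ and a remainder $\tfrac{C_0}{2}\delta^2\int_0^T|A[w_k-\bar w_k]|^2\dd t$.

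The only place where some care is needed — and what I expect to be the crux — is bounding the two quadratic remainders by a constant independent of $k$ and $\delta$. This is exactly where Lemma \ref{bound:seq} enters: the uniform $L^\infty(Q)$ estimates on $m_k,\bar m_k,w_k,\bar w_k$ (together with the boundedness of $a$) give $\int_0^T\|m_k-\bar m_k\|_{L^2}^2\dd t\le C$ and $\int_0^T|A[w_k-\bar w_k]|^2\dd t\le C$ uniformly in $k$. Finally, comparing the three first-order contributions with the definition \eqref{def:h} of $h$ shows that they sum precisely to $\delta\,h((\bar m_k,\bar w_k),(m_k,w_k))$, which equals $-\delta\sigma_k$ by the definition \eqref{eq:def_exploitability} of $\sigma_k$ and the optimality of $(m_k,w_k)$ as minimizer of $h((\bar m_k,\bar w_k),\cdot)$; absorbing the remainders into $\delta^2 C$ then yields \eqref{eq:upper_bound}.
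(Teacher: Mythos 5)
Your proposal is correct and follows essentially the same route as the paper's proof: convexity of $\tilde{L}$ for the perspective term, the primitive formulas \eqref{primitive:F} together with \eqref{ass_hold_f} and \eqref{ass_Lip_phi} to extract the first-order terms for $F$ and $\Phi$ with $O(\delta^2)$ remainders, the uniform bounds of Lemma \ref{bound:seq} to control those remainders, and the identification of the collected first-order terms with $\delta\,h((\bar m_k,\bar w_k),(m_k,w_k))=-\delta\sigma_k$. (Incidentally, your statement of the convexity inequality for $\tilde{\bm{L}}$ has the correct sign, whereas the paper's displayed inequality \eqref{ineq:L-L-conv} contains a sign typo in the $\delta$-term.)
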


\begin{proof}
The convexity of $\tilde{L}$ yields 
\begin{equation}
\int_{Q} \tilde{\bm{L}}[\bar{m}_k^\delta,\bar{w}_k^\delta] \dd x \dd t  \leq \int_{Q} \tilde{\bm{L}}[\bar{m}_{k},\bar{w}_{k}] + \delta \left(\tilde{\bm{L}}[\bar{m}_{k},\bar{w}_{k}] - \tilde{\bm{L}}[m_{k},w_{k}]  \right) \dd x \dd t. \label{ineq:L-L-conv}
\end{equation}
Using that $F$ is the primitive of $f$ in the sense of \eqref{primitive:F}, we have for all $t \in [0,T]$,
\begin{align} \nonumber
& \bm{F}[\bar{m}^{\delta}_{k}](t) = \bm{F}[\bar{m}_{k}](t)  \\
& + \delta_k \int_0^1 \int_{\mathbb{T}^d} f \left(x,t, \bar{m}_k(t) + s \delta (m_k(t)-\bar{m}_{k}(t))\right)(m_k(x,t) - \bar{m}_{k}(x,t)) \dd x \dd s, \label{ineq:F-F'}
\end{align}
For any $(x,t) \in Q$, the Lipschitz-continuity of $f$ yields
\begin{align} \nonumber
f\left(x,t,  \bar{m}_k(t) + s \delta (m_k(t)-\bar{m}_{k}(t)) \right) & \leq f(x,t,\bar{m}_k(t)) +  s \delta C \|m_k(t)-\bar{m}_k(t)\|_{L^2(\mathbb{T}^d)} \\
& \leq f(x,t,\bar{m}_k(t)) +  s \delta C, \nonumber
\end{align}
since $\bar{m}_k,m_k$ are uniformly bounded by Lemma \ref{bound:seq}. Plugging into \eqref{ineq:F-F'} yields
\begin{equation} \label{ineq:F-F-K}
\bm{F}[\bar{m}^{\delta}_{k}](t) = \bm{F}[\bar{m}_{k}](t)   
+  \delta \int_{\mathbb{T}^d} f(x,t,\bar{m}_k(t)) (m_k(x,t) - \bar{m}_{k}(x,t)) \dd x + \delta^2 C.
\end{equation}
Now using that $\Phi$ is the primitive of $\phi$, we have
\begin{equation} \nonumber
\bm{\Phi}[A \bar{w}^{\delta}_{k}](t) \leq \bm{\Phi}[A \bar{w}_k](t) + \delta \langle \phi(t,A  \bar{w}_k(t)) , A[ w_k - \bar{w}_k](t) \rangle 
+ \delta^2|A[w_k - \bar{w}_k](t)|^2
\end{equation}
by Assumption \eqref{ass_Lip_phi}. Using that $\bar{w}_k,w_k$ are uniformly bounded by Lemma \ref{bound:seq} yields 
\begin{equation} \nonumber
|A[ w_k - \bar{w}_k](t)| \leq \|a(t)\|_{L^\infty(\mathbb{T}^d)} \|w_k(t) - \bar{w}_k(t)\|_{L^\infty(\mathbb{T}^d)} \leq C.
\end{equation}
Combining the two last inequalities yields
\begin{align} \label{ineq:phi-phi-K}
\bm{\Phi}[A \bar{w}^{\delta}_{k}](t)  \leq \bm{\Phi}[A \bar{w}_k](t) + \delta \langle \phi(t,A  \bar{w}_k(t)) , A[ w_k - \bar{w}_k](t) \rangle + \delta^2 C.
\end{align}
Then inequality \eqref{eq:upper_bound} holds combining the Assumption \eqref{ass_init_cond} on $g$ and inequalities \eqref{ineq:L-L-conv}, \eqref{ineq:F-F-K}, and \eqref{ineq:phi-phi-K} which concludes the proof.
\end{proof}

\begin{lemma} \label{lemma:seq_varepsilon}
We have that 
\begin{equation} \nonumber
\epsilon_{k+1}  \leq (1- \delta_{k}) \epsilon_{k} +  \delta_{k}^2 C.
\end{equation}
\end{lemma}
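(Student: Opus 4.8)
The plan is to combine the one-step descent estimate of Lemma \ref{lemma:upper_bound} with the certificate inequality of Lemma \ref{eq:seq_gamma}, using the fact that the actualization step of Algorithm \ref{algo:gcg} is precisely the convex combination appearing in Lemma \ref{lemma:upper_bound}. First I would observe that
\begin{equation} \nonumber
(\bar{m}_{k+1},\bar{w}_{k+1}) = (1-\delta_k)(\bar{m}_k,\bar{w}_k) + \delta_k (m_k,w_k) = (\bar{m}_k^{\delta_k},\bar{w}_k^{\delta_k}),
\end{equation}
so that specializing Lemma \ref{lemma:upper_bound} to $\delta = \delta_k \in [0,1]$ yields directly
\begin{equation} \nonumber
\tilde{\mathcal{J}}(\bar{m}_{k+1},\bar{w}_{k+1}) \leq \tilde{\mathcal{J}}(\bar{m}_k,\bar{w}_k) - \delta_k \sigma_k + \delta_k^2 C.
\end{equation}

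Next I would subtract the optimal value $\tilde{\mathcal{J}}(\bar{m},\bar{w})$ from both sides. Since $\epsilon_{k+1} = \tilde{\mathcal{J}}(\bar{m}_{k+1},\bar{w}_{k+1}) - \tilde{\mathcal{J}}(\bar{m},\bar{w})$ and $\epsilon_k = \tilde{\mathcal{J}}(\bar{m}_k,\bar{w}_k) - \tilde{\mathcal{J}}(\bar{m},\bar{w})$, this gives
\begin{equation} \nonumber
\epsilon_{k+1} \leq \epsilon_k - \delta_k \sigma_k + \delta_k^2 C.
\end{equation}
Finally I would invoke the certificate bound $\epsilon_k \leq \sigma_k$ from Lemma \ref{eq:seq_gamma}; because $\delta_k \geq 0$, this lets me replace $-\delta_k \sigma_k$ by the larger quantity $-\delta_k \epsilon_k$, concluding that
\begin{equation} \nonumber
\epsilon_{k+1} \leq \epsilon_k - \delta_k \epsilon_k + \delta_k^2 C = (1-\delta_k)\epsilon_k + \delta_k^2 C.
\end{equation}

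There is essentially no obstacle remaining at this stage: the analytical substance has already been absorbed into Lemma \ref{lemma:upper_bound} (the quadratic-in-$\delta$ descent estimate, which rests on the convexity of $\tilde{L}$, $\bm{F}$, $\bm{\Phi}$ together with the uniform bounds of Lemma \ref{bound:seq}) and into Lemma \ref{eq:seq_gamma} (the exploitability certificate). The only point requiring a moment of care is the direction of the inequality $\epsilon_k \leq \sigma_k$ and the sign condition $\delta_k \geq 0$, which together ensure the substitution $-\delta_k \sigma_k \leq -\delta_k \epsilon_k$ goes the correct way; everything else is a direct chaining of the two preceding results and the definition of the iterate update.
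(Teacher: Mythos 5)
Your proof is correct and follows exactly the paper's argument: apply Lemma \ref{lemma:upper_bound} with $\delta = \delta_k$ (noting the update step gives $(\bar{m}_{k+1},\bar{w}_{k+1}) = (\bar{m}_k^{\delta_k},\bar{w}_k^{\delta_k})$), subtract the optimal value, and conclude via $-\delta_k\sigma_k \leq -\delta_k\epsilon_k$ from Lemma \ref{eq:seq_gamma}. Nothing is missing.
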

\begin{proof}
A direct application of Lemma \ref{lemma:upper_bound} yields,
\begin{equation} \nonumber
\tilde{\mathcal{J}}(\bar{m}_{k+1},\bar{w}_{k+1}) \leq \tilde{\mathcal{J}}(\bar{m}_{k},\bar{w}_{k}) - \delta_k \sigma_k + \delta_k^2 C.
\end{equation}
Thus $\epsilon_{k+1}  \leq \epsilon_{k} - \delta_{k} \sigma_k + \delta_{k}^2 K$
and the conclusion follows by Lemma \ref{eq:seq_gamma} since $- \sigma_k \leq -\epsilon_{k}$.
\end{proof}

\begin{lemma} \label{lemma:rate-conv}
Let  $L_{0} \coloneqq \max\{\epsilon_0/2, C\}$ and $L_{1} \coloneqq \max\{2 \epsilon_1, C\}/\ln(2)$. We have that
\begin{equation} \label{eq:xi}
\begin{cases}
\begin{array}{llll}
\normalfont{\text{(i)}} & \epsilon_{k} \leq \frac{4 L_{0}  }{k+2}  &\normalfont{\text{if }} \delta_{k} = \frac{2}{k+2}, & \normalfont{\text{for any }} k \in \mathbb{N},  \\[1em]
\normalfont{\text{(ii)}}  &  \epsilon_{k} \leq \frac{\ln(k+1) L_{1}}{k+1} &\normalfont{\text{if }} \delta_{k} = \frac{1}{k+1}, & \normalfont{\text{for any }} k \in \mathbb{N}\setminus\{0\}.
\end{array}
\end{cases}
\end{equation}
\end{lemma}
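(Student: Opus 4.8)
The plan is to derive both estimates by induction on $k$, the only input being the one-step recursion
\[
\epsilon_{k+1} \le (1-\delta_k)\,\epsilon_k + \delta_k^2\, C
\]
proved in Lemma~\ref{lemma:seq_varepsilon}. The constants $L_0$ and $L_1$ are chosen precisely so that the proposed right-hand sides are supersolutions of this recursion while also dominating the relevant initial gap; the induction then transports the bound from $k$ to $k+1$. Throughout I use the crude bounds $C\le L_0$ in case (i) and $C\le\ln(2)\,L_1$ in case (ii), both immediate from the definitions of $L_0$ and $L_1$.

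For the choice $\delta_k=2/(k+2)$ the induction is elementary. The base case $k=0$ is the inequality $\epsilon_0\le 2L_0$, which holds since $L_0\ge\epsilon_0/2$. Assuming $\epsilon_k\le 4L_0/(k+2)$, the recursion gives
\[
\epsilon_{k+1}\le \frac{k}{k+2}\cdot\frac{4L_0}{k+2}+\frac{4C}{(k+2)^2}=\frac{4L_0k+4C}{(k+2)^2}\le \frac{4L_0(k+1)}{(k+2)^2},
\]
where I used $C\le L_0$. It remains to compare this with the target $4L_0/(k+3)$, i.e.\ to check $(k+1)(k+3)\le(k+2)^2$, which is just $k^2+4k+3\le k^2+4k+4$. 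This closes the induction.

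For $\delta_k=1/(k+1)$ the scheme is identical but the closing inequality is the delicate point. The base case $k=1$ reads $\epsilon_1\le\tfrac12\ln(2)\,L_1$ and follows from $L_1\ge 2\epsilon_1/\ln(2)$. Assuming $\epsilon_k\le \ln(k+1)\,L_1/(k+1)$, the recursion yields
\[
\epsilon_{k+1}\le \frac{k}{k+1}\cdot\frac{\ln(k+1)\,L_1}{k+1}+\frac{C}{(k+1)^2}=\frac{k\,\ln(k+1)\,L_1+C}{(k+1)^2},
\]
and I must show the right-hand side does not exceed $\ln(k+2)\,L_1/(k+2)$. Here is the main obstacle: the $C$-term \emph{cannot} be discarded. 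Indeed, bounding $C\le\ln(k+1)L_1$ collapses the estimate to $\epsilon_{k+1}\le\ln(k+1)L_1/(k+1)$, which is strictly \emph{larger} than the target, since $x\mapsto\ln(x)/x$ is decreasing for $x>e$; so the naive absorption fails and one must retain the quadratic gain $\delta_k^2 C$. Clearing denominators and using $C\le\ln(2)L_1$, the inequality to verify becomes
\[
(k^2+2k)\,\ln\!\Big(\tfrac{k+2}{k+1}\Big)+\ln(k+2)\ \ge\ (k+2)\ln(2),\qquad k\ge 1,
\]
which I would establish from the elementary estimate $\ln(1+x)\ge x-\tfrac{x^2}{2}$ applied to $x=1/(k+1)$: the first summand is then at least $k+\tfrac12-\tfrac{1}{k+1}$, after which the claim reduces to a linear inequality in $k$ with ample slack from $\ln(k+2)$ (tight only at $k=1$). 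This closes the induction and completes the proof.
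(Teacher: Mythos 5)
Your proof is correct and follows the same inductive scheme as the paper: both cases are closed by feeding the candidate bound into the recursion of Lemma \ref{lemma:seq_varepsilon}, and for case (ii) you arrive at exactly the same residual inequality $(k^2+2k)\ln\bigl(\tfrac{k+2}{k+1}\bigr)+\ln(k+2)\geq(k+2)\ln 2$ as the paper's \eqref{ineq:ineq-ln1k1}. The one place where you genuinely diverge is the elementary verification of that inequality, and there your route is the sounder one: the paper disposes of it by asserting that concavity gives $\ln\bigl(1+\tfrac{1}{k+1}\bigr)\geq \tfrac{\ln 2}{k}$, which is actually false for $k\in\{1,2,3\}$ (already $\ln(3/2)<\ln 2$ at $k=1$; the correct concavity bound is $\ln(2)/(k+1)$), whereas your estimate $\ln(1+x)\geq x-\tfrac{x^2}{2}$ with $x=1/(k+1)$ yields the valid lower bound $k+\tfrac12-\tfrac{1}{k+1}$ for the first summand, after which the residual inequality $k+\tfrac12-\tfrac{1}{k+1}+\ln(k+2)\geq(k+2)\ln 2$ does hold for all $k\geq 1$ (the slack is about $0.02$ at $k=1$ and increases with $k$). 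I would only ask you to make that final check explicit at $k=1$ rather than describing the slack as ``ample''; your side remark that the quadratic gain $\delta_k^2 C$ cannot simply be absorbed into the leading term is also correct and worth keeping.
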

The above Lemma summarizes the rate of convergence of the sequence $(\epsilon_{k})_{k\in \mathbb{N}}$ for two learning rates. The first result (\ref{eq:xi},i) is classical in the context of conditional gradient algorithm (see \cite{dunn1978conditional,frank1956algorithm}). For the sake of completeness we recall how to derive this result in the following proof. The second result (\ref{eq:xi},ii) corresponds to the classical fictitious play learning rate.
\begin{proof}
\textit{Step 1: (\ref{eq:xi},i) holds.}  Let $ \delta_{k} =2/(k+2)$ for any $k \in \mathbb{N}$. For $k = 0$, it is clear that (\ref{eq:xi},i) holds. For $k>0$, assume that $\epsilon_{k}$ satisfies the inequality  (\ref{eq:xi},i). By Lemma \ref{lemma:seq_varepsilon} we have that
\begin{equation} \nonumber
\epsilon_{k+1}  \leq \left(1- \frac{2}{k+2}\right)\frac{4 L_{0} }{k+2}+  \frac{4 C}{(k+2)^2} 
 \leq  \frac{4 L_{0} (k + 1)}{(k+2)^2} 
 \leq  \frac{4 L_{0}}{(k+3)},
\end{equation}
and by induction the step $1$ is proved.

\noindent \textit{Step 2: (\ref{eq:xi},ii) holds.} Let $ \delta_{k} = 1/(k+1)$ for any $k \in \mathbb{N}$. For $k = 1$, it is clear that (\ref{eq:xi},ii) holds by Lemma \ref{lemma:seq_varepsilon}. For $k>1$ assume that $\epsilon_{k}$ satisfies the inequality  (\ref{eq:xi},ii) then by Lemma \ref{lemma:seq_varepsilon} we have
\begin{equation} \nonumber
\epsilon_{k+1}  \leq \left(1- \frac{1}{k+1}\right)\frac{\ln(k+1) L_{1} }{k+1} +  \frac{C}{(k+1)^2}.
\end{equation}
Then to prove (\ref{eq:xi},ii) it is enough to check
\begin{equation} \nonumber
\frac{k \ln(k+1)L_1 + C}{(k+1)^2} \leq  \frac{\ln(k+2)}{(k+2)}.
\end{equation}
Multiplying both side by $(k+1)^2(k+2)$, the inequation (\ref{eq:xi},ii) holds if
\begin{align}
 (k+2)\frac{C}{L_1} & \leq  (k+1)^2 \ln(k+2) -  k(k+2)\ln(k+1) \nonumber\\ 
 & \leq k(k+2)\ln\left(1 + \frac{1}{k+1}\right)  + \ln(k+2). \label{ineq:ineq-ln1k1}
\end{align}
The concavity of the logarithm yields $ \ln\left(1 + 1/(k+1)\right)  \geq \ln(2)/k$. 
Thus the inequality \eqref{ineq:ineq-ln1k1} holds whenever
\begin{equation}  \nonumber
\frac{C}{L_1} \leq \ln(2) + \frac{\ln(k+2)}{k+2},
\end{equation}
which holds by definition of $L_1$. Then  Step 2 is proved, which concludes the proof.
\end{proof}

\begin{lemma} \label{lemma:exploitability}
There exists $C>0$ such that $\sigma_k \leq C \epsilon_k^{1/2}$ for all $k \in \mathbb{N}$.
\end{lemma}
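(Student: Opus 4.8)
The plan is to estimate $\sigma_k$ by comparing the individual cost $\tilde{\mathcal{Z}}_{\gamma_k,P_k}$ at the iterate $(\bar m_k,\bar w_k)$ with its value at the equilibrium $(\bar m,\bar w)$, and then to turn the resulting ``convexity defects'' into distances that the quadratic growth of the potential controls. Recall from the discussion preceding the statement that $\sigma_k = \tilde{\mathcal{Z}}_{\gamma_k,P_k}(\bar m_k,\bar w_k) - \inf_{(m,w)\in\tilde{\mathcal{R}}}\tilde{\mathcal{Z}}_{\gamma_k,P_k}(m,w)$. Since $(\bar m,\bar w)\in\tilde{\mathcal{R}}$, the infimum is $\le \tilde{\mathcal{Z}}_{\gamma_k,P_k}(\bar m,\bar w)$, whence $\sigma_k \le \tilde{\mathcal{Z}}_{\gamma_k,P_k}(\bar m_k,\bar w_k) - \tilde{\mathcal{Z}}_{\gamma_k,P_k}(\bar m,\bar w)$. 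The perspective and terminal-cost terms of $\tilde{\mathcal{Z}}_{\gamma_k,P_k}$ and $\tilde{\mathcal{J}}$ coincide, so subtracting $\epsilon_k = \tilde{\mathcal{J}}(\bar m_k,\bar w_k)-\tilde{\mathcal{J}}(\bar m,\bar w)$ leaves only the congestion and price contributions and gives $\sigma_k \le \epsilon_k + I + II$, with
\[
I = \int_0^T\!\Big( \langle f(\cdot,t,\bar m_k(t)),\bar m_k-\bar m\rangle - \bm{F}[\bar m_k] + \bm{F}[\bar m]\Big)\dd t,
\]
and $II$ the analogous expression with $\Phi,\phi,A\bar w$ in place of $F,f,\bar m$.

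The terms $I$ and $II$ are Bregman divergences: $I$ is the divergence of $F$ between $\bar m$ and $\bar m_k$, which is nonnegative by convexity of $F$, and using the primitive formula \eqref{primitive:F} together with the boundedness of $f$ (Assumption \eqref{ass_f_bounded}) it is bounded by $C\|\bar m_k-\bar m\|_{L^2(Q)}$; likewise $II \le C\|\bar w_k-\bar w\|_{L^2(Q)}$ since $A$ is bounded and $\phi$ is bounded. Thus everything reduces to showing that the two $L^2$ distances are $O(\sqrt{\epsilon_k})$. For this I would read the quadratic growth of Lemma \ref{lemma:J-J} through the bijection $\chi$: since $(\bar m,\bar w)$ minimizes $\tilde{\mathcal{J}}$ (Corollary \ref{corollary:uniq}) and $(\bar m_k,\bar v_k)\in\mathcal{R}$, one gets $\epsilon_k \ge \tfrac{1}{C}\int_Q|\bar v_k-\bar v|^2\bar m_k\,\dd x\,\dd t$, and the lower bound $\bar m_k \ge 1/C$ of Lemma \ref{bound:seq} yields $\|\bar v_k-\bar v\|_{L^2(Q)}^2 \le C\epsilon_k$.

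It then remains to transfer this control to $\bar m_k-\bar m$ and $\bar w_k-\bar w$. The key observation is that $\bar m_k$, being a convex combination of Fokker--Planck solutions that all share the initial datum $m_0$, is itself the solution $\mathsf{m}[\bar v_k]$ of the Fokker--Planck equation with drift $\bar v_k$, and similarly $\bar m = \mathsf{m}[\bar v]$. Writing the equation for $\rho = \bar m_k-\bar m$ (with $\rho(0)=0$ and source $-\nabla\cdot((\bar v_k-\bar v)\bar m)$) and carrying out a standard $L^2$ energy estimate with Grönwall, using the uniform $L^\infty$ bounds on $\bar v_k$ and $\bar m$ from Lemma \ref{bound:seq} and Theorem \ref{theo:main}, gives $\|\bar m_k-\bar m\|_{L^2(Q)} \le C\|\bar v_k-\bar v\|_{L^2(Q)}$; the decomposition $\bar w_k-\bar w = \bar m_k(\bar v_k-\bar v)+(\bar m_k-\bar m)\bar v$ together with the same uniform bounds then gives $\|\bar w_k-\bar w\|_{L^2(Q)}\le C\|\bar v_k-\bar v\|_{L^2(Q)}$. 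Collecting, $I+II \le C\sqrt{\epsilon_k}$, and since $(\epsilon_k)$ is bounded we also have $\epsilon_k \le C\sqrt{\epsilon_k}$, so that $\sigma_k \le C\sqrt{\epsilon_k}$.

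I expect the Fokker--Planck stability estimate $\|\bar m_k-\bar m\|_{L^2(Q)}\le C\|\bar v_k-\bar v\|_{L^2(Q)}$ to be the main obstacle: one must justify the identification $\bar m_k=\mathsf{m}[\bar v_k]$ (convex combinations of solutions with a common initial condition) and then run the energy/Grönwall argument cleanly, making sure the weighted quantity $\int_Q|\bar v_k-\bar v|^2\bar m_k$ from the quadratic growth is correctly converted into the unweighted $\|\bar v_k-\bar v\|_{L^2(Q)}^2$ via the lower bound on $\bar m_k$. Everything else — the split $\sigma_k\le\epsilon_k+I+II$, the nonnegativity of $I,II$, and their crude bound by the $L^2$ distances — is routine convexity bookkeeping. (In fact the same scheme, using the Lipschitz continuity of $f$ and $\phi$ to bound $I,II$ by the \emph{squares} of the distances, would give the sharper $\sigma_k\le C\epsilon_k$; the stated $\sqrt{\epsilon_k}$ bound is all that is needed downstream and follows already from the boundedness of $f$ and $\phi$.)
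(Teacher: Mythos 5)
Your proof is correct, but it follows a genuinely different route from the paper's. The paper's argument is the standard one-line conditional-gradient estimate: it applies Lemma \ref{lemma:upper_bound} for an arbitrary $\delta\in(0,1]$, uses $\tilde{\mathcal{J}}(\bar m,\bar w)\le\tilde{\mathcal{J}}(\bar m_k^\delta,\bar w_k^\delta)$ to obtain $\sigma_k\le \epsilon_k/\delta+\delta C$, and optimizes with $\delta=\sqrt{\epsilon_k/C}$ (enlarging $C$ so that $\delta\le 1$). This needs nothing beyond the descent inequality already proved, in particular neither the strong convexity of $L$ nor the uniform lower bound $\bar m_k\ge 1/C$. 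Your argument instead bounds $\sigma_k$ by $\epsilon_k$ plus two Bregman divergences, controls those by $\|\bar m_k-\bar m\|_{L^2}$ and $\|\bar w_k-\bar w\|_{L^2}$, and then invokes the quadratic growth of Lemma \ref{lemma:J-J} together with an $L^2$ energy estimate for the Fokker--Planck equation --- which is precisely the machinery the paper deploys only afterwards, in Steps 1--4 of the proof of Theorem \ref{lemma:conv-seq}. All the ingredients you use (the identification $\bar m_k=\mathsf{m}[\bar v_k]$ via convexity of $\tilde{\mathcal{R}}$, the bound $\int_Q|\bar v_k-\bar v|^2\bar m_k\le C\epsilon_k$, the stability estimate for $\bar m_k-\bar m$) are available at this point and are established independently of the exploitability lemma, so there is no circularity. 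What your approach buys, as you note, is the sharper conclusion $\sigma_k\le C\epsilon_k$ when the Lipschitz continuity of $f$ and $\phi$ (Assumptions \eqref{ass_hold_f} and \eqref{ass_Lip_phi}) is used to bound the Bregman terms by squared distances; what it costs is generality and length, since the paper's two-line proof extends verbatim to settings without strong convexity of the running cost, where the quadratic growth property is unavailable.
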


\begin{proof}
For any $\delta \in [0,1]$, Lemma \ref{lemma:upper_bound} yields
$\tilde{\mathcal{J}}(\bar{m},\bar{w}) \leq \tilde{\mathcal{J}}(\bar{m}_{k},\bar{w}_{k}) - \delta \sigma_k + \delta^2 C$.
It follows that
\begin{equation} \label{eq:exploitability_cv}
\sigma_k \leq \epsilon_k/\delta  + \delta C, \qquad \forall \delta \in (0,1],
\end{equation}
by  Lemma \ref{lemma:rate-conv}. The optimal choice of $\delta \in (0,1]$ in the latter inequality is given by $\delta = \min \{\sqrt{\epsilon_k/C},1\}$. Since the sequence $(\epsilon_k)_{k\in \mathbb{N}}$ is uniformly bounded from above,
we can increase the constant $C$, so that one can choose $\delta= \sqrt{\epsilon_k/C} \in (0,1]$. For this choice of $\delta$, inequality \eqref{eq:exploitability_cv} yields the announced result.
\end{proof}

For any $k \in \mathbb{N}$ we denote 
\begin{equation} \nonumber
\begin{array}{llllll}
 \delta \bar{m}_k & = \bar{m}_k - \bar{m}, \qquad & \delta \bar{w}_k &= \bar{w}_k - \bar{w}, \qquad & \delta \bar{v}_k & = \bar{v}_k - \bar{v}, \\
 \delta P_k &= P_k - \bar{P}, &
\delta \gamma_k &= \gamma_k - \bar{\gamma} ,& \delta u_k &= u_k - \bar{u}.
\end{array}
\end{equation}

\begin{theorem} \label{lemma:conv-seq}
There exists $C>0$ such that for all $k \in \mathbb{N}$,
\begin{align} \nonumber
\| \delta \bar{v}_k \|_{L^2(Q;\mathbb{R}^d)} + \| \delta \bar{m}_k \|_{L^\infty(0,T;L^{2}(\mathbb{T}^d))} + \| \delta \bar{w}_k\|_{L^{2}(Q;\mathbb{R}^d)}  &  \\ +  \|\delta P_k \|_{L^2(0,T;\mathbb{R}^k)}  + \|\delta \gamma_k \|_{L^\infty(Q)} + \|\delta u_k  \|_{L^\infty(Q)} & \leq  C \epsilon^{1/2}_k.  \nonumber
\end{align}
\end{theorem}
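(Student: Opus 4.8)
The plan is to propagate the rate through all the variables, starting from the feedback $\delta\bar v_k$, for which the quadratic growth property of Lemma \ref{lemma:J-J} gives the estimate essentially for free. First, note that by Remark \ref{rem:equivalence} one has $\tilde{\mathcal{J}}(\bar m_k,\bar w_k)=\mathcal{J}(\bar m_k,\bar v_k)$ and $\tilde{\mathcal{J}}(\bar m,\bar w)=\mathcal{J}(\bar m,\bar v)$, so that $\epsilon_k=\mathcal{J}(\bar m_k,\bar v_k)-\mathcal{J}(\bar m,\bar v)$. Since $(\bar m_k,\bar v_k)=\chi^{-1}(\bar m_k,\bar w_k)\in\mathcal{R}$, Lemma \ref{lemma:J-J} yields
\[
\frac{1}{C}\int_Q|\bar v_k-\bar v|^2\,\bar m_k\,\dd x\,\dd t\le\epsilon_k,
\]
and the uniform lower bound $\bar m_k\ge 1/C$ from Lemma \ref{bound:seq} gives $\|\delta\bar v_k\|_{L^2(Q;\R^d)}^2\le C\epsilon_k$, which settles the first term.

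\textbf{Distribution.} Both $\bar m_k$ and $\bar m$ solve Fokker--Planck equations with drifts $\bar v_k$, $\bar v$ and the same initial datum $m_0$. Writing $\bar v_k\bar m_k-\bar v\bar m=\bar v_k\,\delta\bar m_k+\delta\bar v_k\,\bar m$, the difference solves
\[
\partial_t\delta\bar m_k-\Delta\delta\bar m_k+\nabla\cdot(\bar v_k\,\delta\bar m_k)=-\nabla\cdot(\delta\bar v_k\,\bar m),\qquad\delta\bar m_k(0)=0.
\]
I would run a standard energy estimate: testing against $\delta\bar m_k$, using that $\bar v_k$ and $\bar m$ are uniformly bounded (Lemma \ref{bound:seq} and Theorem \ref{theo:main}), absorbing the gradient contributions into $\|\nabla\delta\bar m_k\|_{L^2}^2$ and applying Grönwall's inequality, one obtains $\|\delta\bar m_k(t)\|_{L^2(\mathbb{T}^d)}^2\le C\int_0^t\|\delta\bar v_k(s)\|_{L^2(\mathbb{T}^d)}^2\,\dd s\le C\epsilon_k$, hence $\|\delta\bar m_k\|_{L^\infty(0,T;L^2(\mathbb{T}^d))}\le C\epsilon_k^{1/2}$.

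\textbf{Flow, price and congestion.} These follow algebraically. From $\delta\bar w_k=\bar m_k\,\delta\bar v_k+\bar v\,\delta\bar m_k$ and the uniform bounds on $\bar m_k$, $\bar v$ one gets $\|\delta\bar w_k\|_{L^2(Q;\R^d)}\le C\epsilon_k^{1/2}$. Since $P_k=\bm\phi[A\bar w_k]$ and $\bar P=\bm\phi[A\bar w]$, the Lipschitz continuity \eqref{ass_Lip_phi} of $\phi$ together with the boundedness of $a$ gives $|\delta P_k(t)|\le C\|\delta\bar w_k(t)\|_{L^2(\mathbb{T}^d)}$, whence $\|\delta P_k\|_{L^2(0,T;\R^k)}\le C\epsilon_k^{1/2}$. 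Likewise $\gamma_k=f(\cdot,\cdot,\bar m_k)$ and $\bar\gamma=f(\cdot,\cdot,\bar m)$, so Assumption \eqref{ass_hold_f} yields the pointwise bound $|\delta\gamma_k(x,t)|\le C\|\delta\bar m_k(t)\|_{L^2(\mathbb{T}^d)}$ and therefore $\|\delta\gamma_k\|_{L^\infty(Q)}\le C\epsilon_k^{1/2}$.

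\textbf{Value function.} Subtracting the two instances of the HJB equation \eqref{eq:HJB} and using a first-order expansion of $\bm H$ (legitimate by the $C^1$ regularity of $H$), I would write $\delta u_k$ in the linear form
\[
-\partial_t\delta u_k-\Delta\delta u_k+b_k\cdot\nabla\delta u_k=\delta\gamma_k-b_k\cdot A^\star\delta P_k,\qquad\delta u_k(T)=0,
\]
where $b_k=\int_0^1\bm H_p[\nabla\bar u+A^\star\bar P+s(\nabla\delta u_k+A^\star\delta P_k)]\,\dd s$ is uniformly bounded thanks to the bounds of Lemma \ref{bound:seq}. A Feynman--Kac (equivalently, maximum-principle) representation then bounds $|\delta u_k(x,t)|$ by $\mathbb E\big[\int_t^T(|\delta\gamma_k|+|b_k\cdot A^\star\delta P_k|)\,\dd s\big]\le T\|\delta\gamma_k\|_{L^\infty(Q)}+C\|\delta P_k\|_{L^1(0,T)}\le C\epsilon_k^{1/2}$, giving $\|\delta u_k\|_{L^\infty(Q)}\le C\epsilon_k^{1/2}$. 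The two genuinely analytic steps are the Fokker--Planck energy estimate and this $L^\infty$ bound; I expect the latter to be the main obstacle, because the source term $b_k\cdot A^\star\delta P_k$ is controlled only in $L^2$ in time, so the probabilistic representation (or a careful parabolic $L^\infty$--$L^1$ duality estimate) is what converts this into the desired uniform bound on $\delta u_k$.
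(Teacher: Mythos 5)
Your proposal is correct, and for the first five quantities it follows the same route as the paper: the quadratic growth inequality of Lemma \ref{lemma:J-J} applied to $(\bar m_k,\bar v_k)=\chi^{-1}(\bar m_k,\bar w_k)$ together with the uniform lower bound $\bar m_k\ge 1/C$ gives $\|\delta\bar v_k\|_{L^2}\le C\epsilon_k^{1/2}$, an energy estimate on the equation satisfied by $\delta\bar m_k$ gives the $L^\infty(0,T;L^2)$ bound (the paper splits the bilinear term as $\nabla\cdot(\bar v\,\delta\bar m_k)$ with source $-\nabla\cdot(\delta\bar v_k\,\bar m_k)$ and invokes the Lions--Magenes semi-coercivity theorem, whereas you use the symmetric splitting and a hand-made Grönwall argument; both work and need only the $L^\infty$ bounds already available), and the bounds on $\delta\bar w_k$, $\delta P_k$, $\delta\gamma_k$ are the same algebraic and Lipschitz propagations. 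The one genuinely different step is the value function. You subtract the two HJB equations, linearize $\bm H$ along the segment to produce a bounded drift $b_k$, and use Feynman--Kac on the resulting linear equation; your concern about the source being only $L^2$ in time is indeed resolved by Cauchy--Schwarz in time, since $\delta P_k$ depends on $s$ only, so $\mathbb E\int_t^T|b_k\cdot A^\star\delta P_k|\,\dd s\le C\sqrt{T}\,\|\delta P_k\|_{L^2(0,T)}$. The paper avoids the linearization altogether: it writes $\delta u_k$ as a difference of two infima of the cost functionals $J_{\gamma_k,P_k}$ and $J_{\bar\gamma,\bar P}$ over the \emph{uniformly bounded} control set $L^{2,C}_{\mathbb F}$ (this is exactly what Lemma \ref{lem:u-L-infty} is for), bounds it by $\sup_\alpha|J_{\gamma_k,P_k}-J_{\bar\gamma,\bar P}|$, and concludes by the same Cauchy--Schwarz in time. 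The paper's route is slightly more economical (no need to justify the regularity of $b_k$ or the validity of the probabilistic representation for the linearized equation), while yours is more self-contained at the PDE level; the estimates obtained are identical.
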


\begin{proof}
\noindent \textit{Step 1: $\| \delta \bar{v}_k \bar{m}_k \|_{L^2(Q;\mathbb{R}^d)} \leq C \epsilon_k^{1/2}$.} 
By Lemma \ref{lemma:J-J}, we have
\begin{equation} \nonumber
\frac{1}{C} \int_Q |\delta \bar{v}_k|^2 \bar{m}_k \dd x \dd t \leq
\mathcal{J}(\bar{m}_k,\bar{v}_k) - \mathcal{J}(\bar{m},\bar{v}) = \epsilon_k.
\end{equation}
Combining the above inequality with $ \|\bar{m}_k\|_{L^\infty(Q)} \leq C$ yields the desired estimate.

\noindent \textit{Step 2: $\| \delta \bar{v}_k \|_{L^2(Q;\mathbb{R}^d)} \leq C \epsilon_k^{1/2}$.} 
By Step 1 and Lemma \ref{bound:seq},
\begin{equation} \nonumber
\| \delta \bar{v}_k \|_{L^2(Q;\mathbb{R}^d)} \leq \| \delta \bar{v}_k \bar{m}_k \|_{L^2(Q;\mathbb{R}^d)} \| 1/\bar{m}_k  \|_{L^\infty(Q)} \leq C \epsilon_k^{1/2},
\end{equation}
and Step 2 holds.

\noindent \textit{Step 3: $\|  \delta \bar{m}_k\|_{C(0,T;L^{2}(\mathbb{T}^d))}  \leq  C \epsilon^{1/2}_k$.}
We have that $ \delta \bar{m}_k$ satisfies
\begin{equation} \nonumber
\begin{array}{rlr}
\partial_t \delta \bar{m}_k  - \Delta \delta \bar{m}_k  + \nabla \cdot ( \bar{v} \delta \bar{m}_k )  = & -  \nabla \cdot (\delta \bar{v}_k  \bar{m}_k), \quad & (x,t) \in Q, \\
\delta m_k(0,x)  = & 0, & x \in \mathbb{T}^d.
\end{array}
\end{equation}
We define the space $V = W^{2,1}(\mathbb{T}^d)$ and its dual $V^{*}$. Then $\delta m_k$ is solution of a parabolic equation of the form
\begin{equation} \nonumber
\begin{array}{rlr}
 \partial_t  m(t) + B(t) m(t) = & f(t), \quad & (x,t) \in Q, \\
m(0,x) = & 0, & x \in \mathbb{T}^d,
\end{array}
\end{equation}
where $B(t) \in L(V, V^{*})$ and $f(t) \in V^{*}$. It is easy to verify that since $\bar{v} \in W^{1,0,\infty}(Q;\R^d)$, there exists a constant $C$ such that $\langle B(t)y,y' \rangle_V \leq C \| y \|_V \| y' \|_V$, for a.e.\@ $t \in (0,T)$ and for all $y$ and $y'$ in $V$.
For any $y \in V$ we further have that
\begin{align} \nonumber
\langle B(t) y, y \rangle_{V} & = \int_{\mathbb{T}^d} \left(- \Delta y + \nabla \cdot \bar{v}(t) y + \langle \bar{v}(t), \nabla y \rangle \right) y \dd x \\ \nonumber
& \geq \int_{\mathbb{T}^d} |\nabla y|^2  + C |y|^2 -   C |\nabla y |  |y| \dd x  \geq \frac{1}{2} \|y \|^2_{V} - \frac{C}{2}  \|y \|^2_{L^2(\mathbb{T}^d)},
\end{align}
where we have used that $ - \int_{\mathbb{T}^d} | \nabla y  | |y| \dd x  \geq - \frac{1}{2} \int_{\mathbb{T}^d} |\nabla y|^2/ C  + C |y|^2 \dd x$. Then $B(t)$ is semi-coercive, uniformly in time. Thus by \cite[Chapter 3, Theorem 1.2]{lions1971optimal} we have
\begin{align} \nonumber
 \| \delta \bar{m}_k\|_{L^{2}(0,T;V)} + \|\partial_t  \delta \bar{m}_k\|_{L^{2}(0,T;V^{*})} & \leq C \| f \|_{L^2(0,T;V^*)} \\ \nonumber
& \leq C \| \nabla \cdot \delta \bar{v}_k \bar{m}_k \|_{L^{2}(0,T;V^{*})} \\
&  \leq C \|\delta \bar{v}_k \bar{m}_k \|_{L^{2}(Q;\mathbb{R}^d)} \leq  C \epsilon^{1/2}_k. \nonumber
\end{align}
We conclude Step 3 with the continuous inclusion (see  \cite[Chapter 3, Theorem 1.1]{lions1971optimal})
\begin{equation} \nonumber
\{m \in L^2(0,T;V); \; \partial_t m \in L^2(0,T;V^*) \} \subseteq C(0,T;L^2(\mathbb{T}^d)).
\end{equation}

\noindent \textit{Step 4: $\| \delta \bar{w}_k \|_{L^{2}(Q;\mathbb{R}^d)}  \leq  C \epsilon^{1/2}_k$.}
By definition of $\delta  \bar{w}_k$ we have
\begin{align} \nonumber
\|\delta  \bar{w}_k\|_{L^2(Q;\mathbb{R}^d)}  \leq  \| \delta \bar{v}_k  \bar{m}_k\|_{L^2(Q;\mathbb{R}^d)} +  \|\bar{v} \delta \bar{m}_k \|_{L^2(Q;\mathbb{R}^d)}  \leq C \epsilon^{1/2}_k,
\end{align}
where the last inequality follows from Step 1 and Step 3.

\noindent \textit{Step 5: $\|\delta P_k  \|_{L^2(0,T;\mathbb{R}^k)} \leq  C \epsilon^{1/2}_k$ and $\| \delta \gamma_k \|_{L^\infty(Q)}\leq  C \epsilon^{1/2}_k$.}
Using that $\phi$ is Lipschitz with respect to its second variable (see Assumption \eqref{ass_Lip_phi}),
\begin{align} \nonumber
|\delta P_k(t) | = |\phi(t, A \bar{w}_k(t)) - \phi(t, A \bar{w}(t)) | & \leq C | A \delta  \bar{w}_k(t)|  \nonumber
\end{align}
for almost every $t\in [0,T]$. Since
\begin{equation} \nonumber
|A \delta  \bar{w}_k(t)| = \left| \int_{\mathbb{T}^d} a(x,t) \delta  \bar{w}_k(x,t) \dd x \right| \leq \|a(t) \|_{L^\infty(\mathbb{T}^d;\mathbb{R}^{k \times d})}  \| \delta \bar{w}_k(t) \|_{L^{1}(\mathbb{T}^d;\mathbb{R}^d)},
\end{equation}
Since $\|a\|_{L^\infty(Q;\mathbb{R}^{k\times d})} \leq C$, Step 4 yields the desired estimate
\begin{equation} \nonumber
\|\delta P_k \|_{L^2(0,T;\mathbb{R}^k)} \leq C \| \delta \bar{w}_k\|_{L^2(Q;\mathbb{R}^d)} \leq C \epsilon^{1/2}_k.
\end{equation}
Using that $f$ is Lipschitz with respect to its third variable (see Assumption \eqref{ass_hold_f}) yields
\begin{equation} \nonumber
|\delta \gamma_k(x,t)| = | f(x,t,\bar{m}_k(t)) - f(x,t,\bar{m}(t)) | \leq C \| \delta \bar{m}_k(t) \|_{L^{2}(\mathbb{T}^d)},
\end{equation}
for any $(x,t) \in Q$. Taking the supremum over $(x,t) \in Q$ both sides of the inequality yields that $\|\delta \gamma_k \|_{C(Q)} \leq C \epsilon^{1/2}_k$ by Step 3, which concludes the step.

\noindent \textit{Step 6: $\|\delta u_k \|_{C(Q)} \leq  C \epsilon^{1/2}_k$.} 
Since $\|\gamma_k\|_{L^{\infty}(Q)} \leq C$, $\|P_k\|_{L^2(0,T;\mathbb{R}^k)} \leq C$, $\|\bar{\gamma}\|_{L^{\infty}(Q)} \leq C$, and $\|\bar{P}\|_{L^2(0,T;\mathbb{R}^k)} \leq C$, Lemma \ref{lem:u-L-infty} yields
\begin{equation} \nonumber
\delta u_k(x,t)  =  \inf_{\alpha \in L_{\mathbb{F}}^{2,C}(t,T;\mathbb{R}^d)} J_{\gamma_k,P_k}(x,t,\alpha) -  \inf_{\alpha' \in L_{\mathbb{F}}^{2,C}(t,T;\mathbb{R}^d)} J_{\bar{\gamma},\bar{P}}(x,t,\alpha'),
\end{equation}
for any $(x,t) \in Q$.
We denote $(X^\alpha_{s})_{s \in [t,T]}$ the solution to the stochastic differential equation $\dd X_s = \alpha_s \dd s + \sqrt{2} \dd B_s$ with $X^\alpha_t = x$, for any $\alpha \in L_{\mathbb{F}}^{2}(t,T;\mathbb{R}^d)$. Then
\begin{align}  \nonumber
|\delta u_k(x,t)|  & \leq \sup_{\alpha \in L_{\mathbb{F}}^{2,C}(t,T;\mathbb{R}^d)} | J_{\gamma_k,P_k}(x,t,\alpha) - J_{\bar{\gamma},\bar{P}}(x,t,\alpha)| \\
& \leq \sup_{\alpha \in L_{\mathbb{F}}^{2,C}(t,T;\mathbb{R}^d)} \mathbb{E} \left[ \int_t^T  |\langle A^\star [\delta P_k](X^\alpha_s,s) , \alpha_s \rangle| + |\delta \gamma_k(X^\alpha_s,s)|  \dd s \right].  \nonumber
\end{align}
For any $(x,s) \in Q$ and $\alpha \in \mathbb{R}^d$, the Cauchy-Schwarz inequality  yields
\begin{align} \nonumber
|\langle A^\star [\delta P_k](x,s) , \alpha \rangle | & \leq |\langle a(x,s) \delta P_k(s)| |\alpha | \\ &
\leq \|a\|_{L^\infty(Q;\mathbb{R}^{k \times d})} | \delta P_k(s)| |\alpha |.
\nonumber
\end{align}
Since $\|a\|_{L^\infty(Q;\mathbb{R}^{k \times d})} \leq C$, we finally have
\begin{equation}  \nonumber
|\delta u_k(x,t)| \leq C \left(\| \delta P_k \|_{L^2(0,T;\mathbb{R}^k)}  + \|\delta \gamma_k \|_{L^\infty(Q)} \right).
\end{equation}
Thus Step 6 holds by Step 5, which concludes the proof.
\end{proof}

Let us comment our last convergence results: Lemma \ref{lemma:exploitability} and Theorem \ref{lemma:conv-seq}. For the fictitious play learning rate $\delta_k = 1/(k+1)$, we have proved that the primal gap sequence $(\epsilon_k)_{k\in\mathbb{N}}$ converges in $O(\ln(k)/k)$ and the exploitability sequence $(\sigma_k)_{k\in\mathbb{N}}$ and the sequence of variables $(\bar{m}_k,\bar{w}_k,\bar{v}_k,P_k,\gamma_k,u_k)_{k\in\mathbb{N}}$ converge in $O(\sqrt{\ln(k)/k})$. We have obtained a sharper convergence result for the Frank-Wolfe learning rate $\delta_k = 2/(k+2)$. For this choice, we have shown that the primal gap sequence $(\epsilon_k)_{k\in\mathbb{N}}$ converges in $O(1/k)$ and the exploitability sequence $(\sigma_k)_{k\in\mathbb{N}}$ and the sequence of variables $(\bar{m}_k,\bar{w}_k,\bar{v}_k,P_k,\gamma_k,u_k)_{k\in\mathbb{N}}$ converge in $O(\sqrt{1/k})$. The convergence results for $(\bar{m}_k,\bar{w}_k,\bar{v}_k,P_k,\gamma_k,u_k)$ are new in the mean field game literature.

We conclude this section with a discussion on our results. The results concerning the convergence of the primal gap and the exploitability (Lemmas \ref{lemma:rate-conv} and \ref{lemma:exploitability}) are the same as those obtained in \cite{geist2021concave} for different mean field game models, with a discrete structure.
These results are indeed general, since they only rely on the convexity structure of the potential problem and the regularity properties of the coupling terms.
Therefore, they could certainly be adapted to other models, for example first order mean field games.

We also expect that similar convergence results, for the coupling terms, the value function, and the distribution, could be obtained in a different framework. A key step in the proof would be to establish a quadratic growth property (as the one obtained in Lemma \ref{lemma:J-J}), under a strong convexity assumption on the running cost $L$.


\appendix

\section{Regularity of the Hamiltonian \label{appendix:reg-H}}

Some properties of the Hamiltonian can be deduced from the convexity assumption \eqref{eq:grad_monotony} and the H\"older continuity of $L$ and its derivatives (Assumption \eqref{ass_holder}). They are collected in the following lemmas. whose proofs can be found in \cite{BHP-schauder}.

\begin{lemma} \label{lemma:reg_H}
The Hamiltonian $H$ is differentiable with respect to $p$ and $H_p$ is differentiable with respect to $x$ and $p$. Moreover, for all $R>0$, there exists $\alpha \in (0,1)$ such that $H \in \mathcal{C}^{\alpha}(B_R)$, $H_p \in \mathcal{C}^{\alpha}(B_R, \R^d)$, $H_{px} \in \mathcal{C}^{\alpha}(B_R, \R^{d \times d})$, and $H_{pp} \in \mathcal{C}^{\alpha}(B_R, \R^{d \times d})$.
\end{lemma}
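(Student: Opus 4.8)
The plan is to recognize that $H$ is, up to a sign in its argument, the Legendre--Fenchel conjugate of $L$ in the velocity variable: $H(x,t,p) = \sup_{v} -\langle p,v\rangle - L(x,t,v)$. The strong convexity \eqref{eq:grad_monotony} together with the quadratic upper bound \eqref{ass_L_quad_growth2} guarantees that for each $(x,t,p)$ the supremum is finite and attained at a unique maximizer $v^\star(x,t,p)$, characterized by the stationarity condition $L_v(x,t,v^\star) = -p$. Everything then reduces to the regularity of this implicitly defined map $v^\star$.

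First I would establish a uniform bound: for $(x,t,p)\in B_R$ one has $|v^\star(x,t,p)|\le R'$ for some $R'=R'(R)$. This follows from \eqref{eq:grad_monotony} applied to the pair $(v^\star,0)$, which controls $\langle L_v(x,t,v^\star)-L_v(x,t,0),v^\star\rangle$ from below, combined with the boundedness of $L_v(x,t,0)$ on $Q$ coming from \eqref{ass_holder} and with $|p|\le R$. This confines the maximizer to a fixed ball $B(\R^d,R')$, so that all subsequent compositions take place on the compact set $B_{R'}$ on which $L_v$, $L_{vx}$, $L_{vv}$ are Hölder. Next I would invoke the envelope theorem (Danskin): since the maximizer is unique and the objective is jointly smooth, $H$ is differentiable in $p$ with $H_p(x,t,p) = -v^\star(x,t,p)$. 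To differentiate once more I would apply the implicit function theorem to $L_v(x,t,v^\star(x,t,p)) = -p$, using that $L_{vv}$ is invertible uniformly on $B_{R'}$ (a quantitative consequence of \eqref{eq:grad_monotony}) and Hölder continuous by \eqref{ass_holder}. Differentiating the stationarity relation in $p$ and in $x$ yields the closed forms
\begin{equation} \nonumber
H_{pp}(x,t,p) = \big(L_{vv}(x,t,v^\star)\big)^{-1}, \qquad H_{px}(x,t,p) = \big(L_{vv}(x,t,v^\star)\big)^{-1} L_{vx}(x,t,v^\star).
\end{equation}

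The Hölder regularity then propagates by composition. I would first show $v^\star\in\mathcal{C}^{\alpha}(B_R,\R^d)$: inverting the strongly monotone, Hölder map $v\mapsto L_v(x,t,v)$ gives a Hölder inverse, and the dependence on $(x,t)$ is controlled by \eqref{ass_holder}. Since $v^\star$ ranges in $B(\R^d,R')$, the maps $L_{vv}(x,t,v^\star)$ and $L_{vx}(x,t,v^\star)$ are Hölder, and matrix inversion is smooth (hence locally Lipschitz) on the set of matrices bounded below by a fixed positive multiple of the identity; composing gives the stated Hölder continuity of $H_{pp}$ and $H_{px}$, while $H=-\langle p,v^\star\rangle - L(x,t,v^\star)$ and $H_p=-v^\star$ are Hölder for the same reason. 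At each composition the Hölder exponent may only decrease, which is admissible by the convention on $\alpha$.

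The hard part will be the quantitative inverse-function estimates that turn the pointwise invertibility of $L_{vv}$ into a \emph{uniform} lower bound on $B_{R'}$ and into the Hölder continuity of $v^\star$ itself, rather than mere continuity. This is where the strong convexity constant $C_0$ must be used carefully to bound $\|(L_{vv})^{-1}\|$ uniformly, so that the inversion map is Lipschitz on the relevant range and the Hölder modulus of $v^\star$ can be estimated explicitly; the remaining regularity of $H$ and its derivatives is then a routine bookkeeping of compositions.
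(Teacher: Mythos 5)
Your proposal is correct and follows the standard route: the paper itself does not spell out a proof but simply defers to Lemma 1 of the cited reference (the companion Schauder paper), where the argument is exactly the one you give — unique maximizer $v^\star$ with $L_v(x,t,v^\star)=-p$, a uniform bound on $v^\star$ over $B_R$, the envelope identity $H_p=-v^\star$, the implicit function theorem yielding $H_{pp}=(L_{vv})^{-1}$ and $H_{px}=(L_{vv})^{-1}L_{vx}$, and H\"older regularity propagated through compositions using the uniform lower bound $L_{vv}\geq \tfrac{1}{C_0}I$ coming from \eqref{eq:grad_monotony} (read, as intended, with $|v-v'|^2$ on the right-hand side). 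No gaps; the quantitative points you flag as ``the hard part'' are handled exactly as you describe.
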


\begin{proof}
See \cite[Lemma 1]{BHP-schauder}.
\end{proof}

\begin{lemma} \label{lemma:H-L-quad}
There exists a constant $C > 0$ such that for all $(x,t) \in Q$, for all
$p\in \mathbb{R}^d$ and for all $v\in \mathbb{R}^d$,
\begin{equation} \label{eq:H+L}
H(x,t,p) + L(x,t,v) + \langle p,v \rangle \geq \frac{1}{C}|v + H_p(x,t,p)|^2.
\end{equation}
In addition for any $m,\bar{m} \geq 0$ and $\bar{v} = - H_p(x,t,p)$ we have that
\begin{equation}
L(x,t,v) m - L(x,t,\bar{v}) \bar{m} \geq  - H(x,t,p) (m - \bar{m}) 
- \langle p,w - \bar{w} \rangle + \frac{1}{C}|v - \bar{v}|^2 m, \label{ineq-conv:L-L}
\end{equation}
where $(w,\bar{w}) \coloneqq (mv,\bar{m}\bar{v})$.
\end{lemma}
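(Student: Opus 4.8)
The plan is to exploit the fact that $H(x,t,\cdot)$ is the Legendre--Fenchel conjugate of $L(x,t,\cdot)$ (up to the sign of $p$), so that the two stated inequalities are, respectively, a strong-convexity estimate on the Bregman divergence of $L$ and a scaled consequence of it obtained by weighting with the masses $m$ and $\bar m$. First I would record the first-order characterization of the supremum defining $H$: since $L(x,t,\cdot)$ is strongly convex by \eqref{eq:grad_monotony} and $C^1$, the supremum in $H(x,t,p)=\sup_v -\langle p,v\rangle - L(x,t,v)$ is attained at the unique point $v^\star$ solving $L_v(x,t,v^\star)=-p$, and by the standard conjugacy relation $v^\star = -H_p(x,t,p)=\bar v$. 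In particular one obtains the exact identity
\begin{equation} \label{plan:equality}
H(x,t,p) + L(x,t,\bar v) + \langle p,\bar v\rangle = 0 .
\end{equation}

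To prove \eqref{eq:H+L}, I would rewrite its left-hand side by subtracting \eqref{plan:equality}, giving
\begin{equation} \nonumber
H(x,t,p) + L(x,t,v) + \langle p,v\rangle = L(x,t,v) - L(x,t,\bar v) - \langle L_v(x,t,\bar v), v-\bar v\rangle,
\end{equation}
where I used $-p = L_v(x,t,\bar v)$ to turn $\langle p, v-\bar v\rangle$ into $-\langle L_v(x,t,\bar v),v-\bar v\rangle$. The right-hand side is precisely the Bregman divergence of $L$ at $\bar v$. Writing it as $\int_0^1 \langle L_v(x,t,\bar v + s(v-\bar v)) - L_v(x,t,\bar v),\, v-\bar v\rangle\,\dd s$ and invoking the strong-convexity estimate \eqref{eq:grad_monotony} inside the integrand yields a lower bound of the form $\frac{1}{C}|v-\bar v|^2$. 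Since $\bar v = -H_p(x,t,p)$, we have $|v-\bar v| = |v+H_p(x,t,p)|$, which is exactly \eqref{eq:H+L}; the constant $C$ is uniform in $(x,t)$ because the strong-convexity constant $C_0$ is.

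The second inequality \eqref{ineq-conv:L-L} then follows by a scaling argument. Multiplying \eqref{eq:H+L} by $m\ge 0$ and using $w=mv$ gives $L(x,t,v)\,m \ge -H(x,t,p)\,m - \langle p,w\rangle + \frac{1}{C}|v-\bar v|^2 m$. On the other hand, multiplying the equality \eqref{plan:equality} by $\bar m\ge 0$ and using $\bar w = \bar m\bar v$ gives $L(x,t,\bar v)\,\bar m = -H(x,t,p)\,\bar m - \langle p,\bar w\rangle$. Subtracting the second relation from the first produces exactly
\begin{equation} \nonumber
L(x,t,v)\,m - L(x,t,\bar v)\,\bar m \ge -H(x,t,p)(m-\bar m) - \langle p, w-\bar w\rangle + \frac{1}{C}|v-\bar v|^2 m,
\end{equation}
which is \eqref{ineq-conv:L-L}.

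I expect the only delicate point to be the rigorous justification of the conjugacy relation $H_p(x,t,p) = -v^\star$ together with the attainment and uniqueness of the maximizer; these rest on the strong convexity \eqref{eq:grad_monotony} (for uniqueness and for coercivity of the inner problem, the latter already guaranteeing finiteness of the supremum) and on the differentiability of $H$ recorded before the statement and in Lemma \ref{lemma:reg_H}, via the envelope theorem. The remaining steps---the Bregman representation and the passage from \eqref{eq:H+L} to \eqref{ineq-conv:L-L}---are elementary once this identification is in place, and care is only needed to ensure that every constant depends solely on $C_0$, hence is uniform over $Q\times\mathbb{R}^d$.
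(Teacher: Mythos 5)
Your proof is correct and essentially self-contained; the paper does not reprove this lemma but simply cites \cite{BHP-schauder} (``Proof of Proposition 2''), and your route --- the exact Fenchel identity $H(x,t,p)+L(x,t,\bar v)+\langle p,\bar v\rangle=0$ at the unique maximizer $\bar v=-H_p(x,t,p)$, the Bregman-divergence lower bound from strong convexity to get \eqref{eq:H+L}, and then weighting by $m$ and $\bar m$ to get \eqref{ineq-conv:L-L} --- is the standard conjugate-duality argument that the cited reference uses. One caveat worth flagging: as literally written, \eqref{eq:grad_monotony} bounds the monotonicity gap below by $\tfrac{1}{C_0}|v-v'|$ to the first power, which would only yield a \emph{linear} lower bound on the Bregman divergence; your quadratic conclusion requires the (clearly intended) exponent $2$, consistent with the paper's use of the term ``strongly convex'' and with the quadratic growth of $L$ invoked elsewhere, so you should state explicitly that you are reading \eqref{eq:grad_monotony} with $|v-v'|^2$.
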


\begin{proof}
See \cite[Proof of Proposition 2]{BHP-schauder}.
\end{proof}

\section{A priori bounds for parabolic equations}

In this appendix we provide estimates for the following parabolic equation:
\begin{equation}
  \label{eq:parabolic}
\begin{array}{rlr}
\partial_t u - \sigma \Delta u + \langle b, \nabla u \rangle+ c u = & h, \quad & (x,t) \in Q, \\
u(x,0)= & u_0(x), & x \in \mathbb{T}^d,
\end{array}
\end{equation}
for different assumptions on $b$, $c$, $h$, and $u_0$. The proofs of the following results can be found in the Appendix of \cite{BHP-schauder}; they largely rely on \cite{LSU}. We recall that $p$ is a fixed parameter and $p>d+2$. 

\begin{theorem} 
\label{theo:max_reg1}
 For all $R>0$, there exists $C>0$ such that for all $u_0 \in W^{p,2-2/p}(\mathbb{T}^d)$, for all $b \in L^p(Q;\R^d)$, for all $c \in L^p(Q)$, for all $h \in L^p(Q)$, satisfying
\begin{equation*}
\| u_0 \|_{W^{p,2-2/p}(\mathbb{T}^d)} \leq R, \quad
\| b \|_{L^p(Q;\R^d)} \leq R, \quad
\| c \|_{L^p(Q)} \leq R, \quad
\| h \|_{L^p(Q)} \leq R,
\end{equation*}
equation \eqref{eq:parabolic} has a unique solution $u$ in $W^{2,1,p}(Q)$. Moreover, $\| u \|_{W^{2,1,p}(Q)} \leq C$.
\end{theorem}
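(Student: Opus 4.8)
The principal part of \eqref{eq:parabolic} is the heat operator $\partial_t - \sigma \Delta$, and the plan is to treat the lower-order terms $\langle b, \nabla u \rangle + c u$ as a perturbation. The starting point is the maximal $L^p$-regularity estimate for the heat equation, which is the classical result of \cite{LSU}: for $h \in L^p(Q)$ and $u_0 \in W^{p,2-2/p}(\mathbb{T}^d)$, the problem $\partial_t w - \sigma \Delta w = h$, $w(\cdot,0) = u_0$, has a unique solution $w \in W^{2,1,p}(Q)$ with $\|w\|_{W^{2,1,p}(Q)} \leq C_0 \big( \|h\|_{L^p(Q)} + \|u_0\|_{W^{p,2-2/p}(\mathbb{T}^d)} \big)$, where $C_0$ depends only on $\sigma$, $d$, $p$. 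I would take this as the basic building block and reduce the theorem to controlling the perturbation.

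The main difficulty is the term $\langle b, \nabla u \rangle$: since $b$ lies only in $L^p$, the product belongs to $L^p$ only when $\nabla u$ is bounded, and its norm is controlled by $\|b\|_{L^p}\|\nabla u\|_{L^\infty} \leq R\|\nabla u\|_{L^\infty}$, with an analogous remark for $c u$. Because $p > d+2$, one has the anisotropic embedding $W^{2,1,p}(Q) \hookrightarrow \mathcal{C}^{1+\alpha,(1+\alpha)/2}(Q)$ for some $\alpha \in (0,1)$, so $u$ and $\nabla u$ are indeed bounded; however the coefficient $R$ in front of $\|\nabla u\|_{L^\infty}$ is not small and cannot be absorbed into the left-hand side globally. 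Smallness must instead be recovered on short time intervals.

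The key estimate I would establish is the following short-time interpolation inequality: for a function $v \in W^{2,1,p}(Q_\tau)$ with $Q_\tau = \mathbb{T}^d \times [0,\tau]$ and vanishing initial trace $v(\cdot,0) = 0$ (so that also $\nabla_x v(\cdot,0) = 0$ on $\mathbb{T}^d$), the Hölder embedding yields $\|v\|_{L^\infty(Q_\tau)} + \|\nabla v\|_{L^\infty(Q_\tau)} \leq C \tau^{\alpha/2} \|v\|_{W^{2,1,p}(Q_\tau)}$, the positive power $\tau^{\alpha/2}$ arising precisely from the zero initial data together with Hölder continuity in time. With this in hand I define on $[0,\tau]$ the map $\Phi$ sending $u$ to the solution of $\partial_t w - \sigma \Delta w = h - \langle b, \nabla u \rangle - c u$ with $w(\cdot,0) = u_0$. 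For two inputs $u_1, u_2$ the difference $\Phi(u_1) - \Phi(u_2)$ has zero initial data, and combining the maximal-regularity estimate with the short-time inequality gives $\|\Phi(u_1) - \Phi(u_2)\|_{W^{2,1,p}(Q_\tau)} \leq C_0 R\, C\, \tau^{\alpha/2} \|u_1 - u_2\|_{W^{2,1,p}(Q_\tau)}$. Choosing $\tau$ small enough, depending only on $R$ through $C_0$, $C$, $\alpha$, makes $\Phi$ a contraction, which yields a unique fixed point, hence a unique solution on $[0,\tau]$ together with the bound $\|u\|_{W^{2,1,p}(Q_\tau)} \leq 2 C_0 \big( \|h\|_{L^p} + \|u_0\|_{W^{p,2-2/p}} \big)$.

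Finally I would iterate. Since $\tau$ depends only on $R$, the interval $[0,T]$ is covered in $N = \lceil T/\tau \rceil$ steps, a number depending only on $R$ and $T$; on each subinterval the initial datum is the trace of the solution from the previous step, whose $W^{p,2-2/p}(\mathbb{T}^d)$ norm is controlled by its $W^{2,1,p}$ norm via the trace theorem, and propagating the bound through the finitely many steps yields the global estimate $\|u\|_{W^{2,1,p}(Q)} \leq C$ with $C$ depending only on $R$ and the fixed data $\sigma, d, p, T$. Uniqueness on $[0,T]$ follows by gluing the local uniqueness statements. I expect the only genuinely delicate point to be the short-time interpolation inequality with vanishing constant for functions with zero initial trace; once it is in place, the contraction argument and the tracking of constants across the iteration are routine.
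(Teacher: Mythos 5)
Your proposal is correct. Note that the paper does not actually prove Theorem \ref{theo:max_reg1}: it defers to the appendix of \cite{BHP-schauder} and ultimately to \cite{LSU}, where the result is obtained by exactly the strategy you describe, namely maximal $L^p$-regularity for the heat semigroup combined with absorption of the lower-order terms $\langle b,\nabla u\rangle + cu$, using that $p>d+2$ forces $u$ and $\nabla u$ into $L^\infty$ so that the products land in $L^p$. The one point you rightly flag as delicate, the factor $\tau^{\alpha/2}$ in the estimate $\|v\|_{L^\infty(Q_\tau)}+\|\nabla v\|_{L^\infty(Q_\tau)}\leq C\tau^{\alpha/2}\|v\|_{W^{2,1,p}(Q_\tau)}$ with $C$ independent of $\tau$ for $v$ with vanishing initial trace, is handled in the standard way by extending $v$ by zero to negative times (legitimate precisely because the trace vanishes), applying the H\"older embedding on a cylinder of fixed time-length, and using $\nabla v(\cdot,0)=0$ together with the $C^{\alpha/2}$ regularity in time; with that in place your contraction, the bound on the fixed point via the splitting $u = u_* + v$, and the finite iteration over $\lceil T/\tau\rceil$ subintervals (with $\tau$ depending only on the bounds for $b$ and $c$, not on the size of the evolving initial datum) all go through.
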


\begin{theorem}
\label{LAD-l3.4p82}
For $q \in (1,\infty)$, the trace at time $t=0$ of elements of $W^{2,1,q}(Q)$
belongs to $W^{q,2-2/q}(\Omega)$. 
\end{theorem}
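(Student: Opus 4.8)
The statement is the classical trace theorem for anisotropic parabolic Sobolev spaces, here with spatial domain $\Omega = \mathbb{T}^d$, and I would prove it by real interpolation: the plan is to realize $W^{2,1,q}(Q)$ as an intersection of two Bochner spaces and then invoke the Lions--Peetre trace theorem. First, I would record the identification
\[
W^{2,1,q}(Q) = L^q(0,T; W^{2,q}(\mathbb{T}^d)) \cap W^{1,q}(0,T; L^q(\mathbb{T}^d)),
\]
which is immediate from the definition $W^{2,1,q}(Q) = W^{1,q}(Q) \cap L^q(0,T; W^{2,q}(\mathbb{T}^d))$ recalled in Section \ref{sec:data}: the requirement $\partial_t u \in L^q(Q)$ is exactly $u \in W^{1,q}(0,T; L^q(\mathbb{T}^d))$, while the spatial first derivatives are already controlled by the factor $L^q(0,T; W^{2,q}(\mathbb{T}^d))$.

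Next I would apply the abstract trace theorem to the interpolation couple $X_0 = W^{2,q}(\mathbb{T}^d)$, $X_1 = L^q(\mathbb{T}^d)$, for which $X_0 \hookrightarrow X_1$. Any $u$ with $u \in L^q(0,T; X_0)$ and $\partial_t u \in L^q(0,T; X_1)$ admits a representative in $C([0,T]; (X_0, X_1)_{1/q, q})$, and the trace map $u \mapsto u(\cdot, 0)$ is bounded into $(X_0, X_1)_{1/q, q}$ (see \cite{LSU, lions1971optimal}). If one prefers the half-line version of this result, one reduces to it by multiplying $u$ by a smooth temporal cutoff equal to $1$ near $t = 0$ and vanishing near $t = T$, which does not alter the trace at $t = 0$ and leaves all norms under control. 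This yields $u(\cdot, 0) \in (W^{2,q}(\mathbb{T}^d), L^q(\mathbb{T}^d))_{1/q, q}$.

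Finally I would identify this interpolation space. By interpolation of the $L^q$-Sobolev scale on the torus, $(W^{2,q}(\mathbb{T}^d), L^q(\mathbb{T}^d))_{\theta, q}$ coincides with the fractional Sobolev (Slobodeckij) space of integrability $q$ and order $2(1-\theta)$; taking $\theta = 1/q$ gives order $2(1 - 1/q) = 2 - 2/q$, so that $(X_0, X_1)_{1/q, q} = W^{q, 2-2/q}(\mathbb{T}^d)$, which is precisely the claimed trace space. The delicate points lie entirely in the temporal trace analysis: fixing the interpolation index $\theta = 1/q$ (which can be cross-checked against the known compatibility condition $u_0 \in W^{q,2-2/q}$ for maximal $L^q$-regularity of the heat equation, cf.\ Theorem \ref{theo:max_reg1}) and matching the abstract real interpolation space $(W^{2,q}, L^q)_{1/q, q}$ with the concrete Slobodeckij space, which requires the correct interpolation theorem for the $L^q$-Sobolev scale. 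The periodic setting $\Omega = \mathbb{T}^d$ removes all spatial boundary complications, so no further difficulty arises there.
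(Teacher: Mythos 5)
Your argument is correct, but it is a genuinely different route from the paper's: the paper gives no proof at all, deferring to the appendix of \cite{BHP-schauder}, which in turn rests on \cite{LSU}, where this trace result (Lemma~II.3.4 there) is obtained by the direct method of that book --- explicit estimates on the Slobodeckij seminorm of $u(\cdot,0)$ via integral representations and extension in time, with $W^{q,2-2/q}$ \emph{defined} as in \cite[Section II.2]{LSU}. Your proof instead packages the same content abstractly: the identification $W^{2,1,q}(Q)=L^q(0,T;W^{2,q}(\mathbb{T}^d))\cap W^{1,q}(0,T;L^q(\mathbb{T}^d))$ is right (the spatial first derivatives in $W^{1,q}(Q)$ are absorbed by the first factor), the Lions--Peetre trace theorem does give a continuous representative with values in $(W^{2,q}(\mathbb{T}^d),L^q(\mathbb{T}^d))_{1/q,q}$, and the identification of that real interpolation space with $B^{2-2/q}_{q,q}(\mathbb{T}^d)=W^{q,2-2/q}(\mathbb{T}^d)$ with $\theta=1/q$ is the standard one, consistent (as you note) with the compatibility class in Theorem~\ref{theo:max_reg1}. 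Two small points of care: the equality of the Besov space with the Slobodeckij space as you state it is for non-integer order, which here fails only at $q=2$, where the conclusion still holds since $B^1_{2,2}=W^{1,2}$; and the trace theorem you invoke is really Lions--Peetre/Lions--Magenes rather than \cite{lions1971optimal}, which the paper cites only for the $L^2$-energy setting. What each approach buys: the \cite{LSU} route is self-contained at the level of elementary real-variable estimates and comes bundled with the boundary-trace and inverse-trace (extension) statements used for parabolic problems on general domains, while yours is shorter, conceptually transparent, and generalizes immediately to UMD-valued or weighted maximal-regularity settings; on the torus, where no spatial boundary intervenes, the interpolation proof is arguably the cleaner one.
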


\begin{theorem} \label{thm:bound-u-u0-h}
There exists $C>0$ such that for all $u_0 \in W^{2-2/p,p}(\mathbb{T}^d)$ and for all $h \in L^p(Q)$, the unique solution $u$ to \eqref{eq:parabolic} (with $b = 0$ and $c=0$) satisfies the following estimate:
\begin{equation} \nonumber
\|u\|_{W^{2,1,p}(Q)} \leq C \left( \|u_0\|_{W^{2-2/p,p}(\mathbb{T}^d)} + \|h\|_{L^p(Q)} \right). 
\end{equation}
\end{theorem}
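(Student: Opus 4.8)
The plan is to deduce this estimate directly from Theorem~\ref{theo:max_reg1} by exploiting the linearity of \eqref{eq:parabolic}, which holds precisely because we now take $b=0$ and $c=0$. First I would observe that Theorem~\ref{theo:max_reg1} already guarantees that for every pair $(u_0,h) \in W^{2-2/p,p}(\mathbb{T}^d) \times L^p(Q)$ the problem \eqref{eq:parabolic} (with $b=c=0$) admits a unique solution $u \in W^{2,1,p}(Q)$. Consequently the solution map $S \colon (u_0,h) \mapsto u$ is well defined, and it is linear: the operator on the left-hand side of \eqref{eq:parabolic} is linear in $u$, and the data $(u_0,h)$ enter linearly, so by uniqueness $S(\lambda u_0, \lambda h) = \lambda\, S(u_0,h)$ for every scalar $\lambda$.

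Next I would establish the claimed bound by a homogeneity argument. If $(u_0,h) = (0,0)$, then by uniqueness $u = 0$ and the inequality is trivial. Otherwise, set $M := \|u_0\|_{W^{2-2/p,p}(\mathbb{T}^d)} + \|h\|_{L^p(Q)} > 0$ and consider the normalized data $(u_0/M, h/M)$, which satisfies $\|u_0/M\|_{W^{2-2/p,p}(\mathbb{T}^d)} \leq 1$ and $\|h/M\|_{L^p(Q)} \leq 1$. Applying Theorem~\ref{theo:max_reg1} with $R = 1$ (the hypotheses on $b$ and $c$ being trivially met since $b = c = 0$) yields a constant $C > 0$, independent of the data, such that $\|S(u_0/M, h/M)\|_{W^{2,1,p}(Q)} \leq C$. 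By the linearity of $S$ we have $S(u_0/M, h/M) = S(u_0,h)/M = u/M$, whence $\|u\|_{W^{2,1,p}(Q)} \leq C M = C\bigl(\|u_0\|_{W^{2-2/p,p}(\mathbb{T}^d)} + \|h\|_{L^p(Q)}\bigr)$, which is the desired estimate.

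The substance of the result is already contained in Theorem~\ref{theo:max_reg1}; the only new point is to extract the \emph{explicit} linear dependence on the data, so I do not expect a genuine obstacle here. The sole mild subtlety is to ensure that the constant furnished by Theorem~\ref{theo:max_reg1} can be taken uniform over the normalized data, which is exactly what fixing $R = 1$ provides. An alternative route would decompose $u = u_1 + u_2$, where $u_1$ solves the homogeneous equation with initial datum $u_0$ and $u_2$ solves the inhomogeneous equation with zero initial datum, and then bound each piece separately using maximal parabolic regularity; however this merely redoes work already encapsulated in Theorem~\ref{theo:max_reg1}, so I would favour the rescaling argument above.
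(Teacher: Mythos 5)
Your argument is correct. The paper itself gives no in-text proof of Theorem~\ref{thm:bound-u-u0-h}: the appendix states that the proofs of all results there are to be found in the appendix of \cite{BHP-schauder} and ultimately rest on the maximal parabolic regularity estimates of \cite{LSU}, which yield the linear dependence on $(u_0,h)$ directly. Your route is genuinely different and, in the context of this paper, arguably more economical: you take Theorem~\ref{theo:max_reg1} as a black box and upgrade its uniform bound (``data of norm at most $R$ implies solution of norm at most $C(R)$'') to the homogeneous estimate by observing that for $b=c=0$ the solution operator $S(u_0,h)=u$ is linear --- by uniqueness in $W^{2,1,p}(Q)$, $\lambda u$ is the solution for the data $(\lambda u_0,\lambda h)$ --- and then rescaling by $M=\|u_0\|_{W^{2-2/p,p}(\mathbb{T}^d)}+\|h\|_{L^p(Q)}$ with $R=1$. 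The two small points you need are both in order: Theorem~\ref{theo:max_reg1} does supply existence and uniqueness for arbitrary data (take $R$ large enough), so $S$ is well defined and homogeneous, and the constant obtained for $R=1$ is independent of the normalized data. What your approach buys is a two-line derivation that adds no new PDE input; what it costs is that it only works because $b=c=0$ makes the problem linear (for nonzero $b,c$ the solution map is affine in $(u_0,h)$ but the constant would depend on $\|b\|,\|c\|$, which is exactly why Theorem~\ref{theo:max_reg1} is phrased with a uniform bound over balls rather than a homogeneous estimate). No gap.
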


\begin{lemma}
  \label{lemma:max_reg_embedding}
There exists $\delta \in (0,1)$ and $C>0$ such that for all $u \in
W^{2,1,p}(Q)$,
\begin{equation*}
\| u \|_{\mathcal{C}^\delta(Q)} + \| \nabla u \|_{\mathcal{C}^\delta(Q;\R^d)}
\leq C \| u \|_{W^{2,1,p}(Q)}.
\end{equation*}
\end{lemma}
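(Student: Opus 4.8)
The plan is to read this off from the anisotropic (parabolic) Sobolev embedding theory of \cite{LSU}, on which the whole appendix relies. The structural observation that drives everything is that in $W^{2,1,p}(Q)=W^{1,p}(Q)\cap L^p(0,T;W^{2,p}(\mathbb{T}^d))$ the time derivative counts like two spatial derivatives, so the effective dimension governing embeddings is $d+2$ and the relevant Morrey-type threshold is $p>d+2$. This is exactly the standing assumption on $p$, and it is precisely what places us in the regime where not merely $u$, but also its spatial gradient $\nabla u$, is H\"older continuous.

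First I would invoke the parabolic embedding theorem of \cite[Chapter II]{LSU}: with
\begin{equation*}
\alpha:=1-\frac{d+2}{p}\in(0,1),
\end{equation*}
every $u\in W^{2,1,p}(Q)$ has a representative that is continuous with continuous spatial gradient, and there is a constant $C=C(d,p,T)$ such that $u$ and $\nabla u$ are H\"older continuous of exponent $\alpha$ with
\begin{equation*}
\|u\|_{\mathcal{C}^{\alpha}(Q)}+\|\nabla u\|_{\mathcal{C}^{\alpha}(Q;\R^d)}\leq C\,\|u\|_{W^{2,1,p}(Q)}.
\end{equation*}
In fact the embedding furnishes the stronger parabolic regularity $u\in\mathcal{C}^{1+\alpha,(1+\alpha)/2}(Q)$, but for the present statement only the H\"older continuity of $u$ and of $\nabla u$, together with the displayed norm control, is needed.

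The only point that will require care is the convention for the H\"older seminorms. If the spaces $\mathcal{C}^{\delta}(Q)$ in the statement are understood, as in \cite{LSU}, with respect to the parabolic distance $|x-y|+|t-s|^{1/2}$, then $\delta=\alpha$ works verbatim. If instead they are measured with the Euclidean distance on $Q$, the passage is still harmless on the bounded cylinder: for any $\delta\le\alpha/2$ one has $|t-s|^{\alpha/2}\le T^{\,\alpha/2-\delta}\,|t-s|^{\delta}$, so a parabolic-H\"older bound of exponent $\alpha$ yields a Euclidean-H\"older bound of exponent $\delta:=\alpha/2$. In either case one obtains a $\delta\in(0,1)$ and the asserted inequality. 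I do not expect any genuine obstacle beyond quoting the anisotropic embedding correctly and matching conventions; all the analytic substance is already carried by the strict inequality $\alpha=1-(d+2)/p>0$.
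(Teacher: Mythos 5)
Your proof is correct and follows essentially the same route as the paper, which simply defers this lemma to the appendix of \cite{BHP-schauder} and the anisotropic embedding theory of \cite[Chapter II]{LSU}; the exponent $\alpha=1-(d+2)/p>0$, guaranteed by the standing assumption $p>d+2$, is exactly the Morrey-type threshold being invoked there. Your additional remark on reconciling the parabolic and Euclidean H\"older conventions is a harmless and accurate clarification.
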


\begin{theorem}
    \label{theo:holder_reg_classical}
    For all $\beta \in (0,1)$, for all $R>0$, there exist $\alpha \in (0,1)$ and $C>0$ such that for all $u_0 \in \mathcal{C}^{2+ \beta}(\mathbb{T}^d)$, $b \in \mathcal{C}^{\beta,\beta/2}(Q;\R^d)$, $c \in \mathcal{C}^{\beta,\beta/2}(Q)$ and $h \in \mathcal{C}^{\beta,\beta/2}(Q)$ satisfying
\begin{equation*}
\| u_0 \|_{\mathcal{C}^{2+ \beta}(\mathbb{T}^d)} \leq R, \ \
\| b \|_{\mathcal{C}^{\beta,\beta/2}(Q;\R^d)} \leq R, \ \
\| c \|_{\mathcal{C}^{\beta,\beta/2}(Q)} \leq R, \ \ \text{and} \ \
\| h \|_{\mathcal{C}^{\beta,\beta/2}(Q)} \leq R,
\end{equation*}
the solution to \eqref{eq:parabolic} lies in $\mathcal{C}^{2+\alpha,1+\alpha/2}(Q)$ and satisfies
$\| u \|_{\mathcal{C}^{2+\alpha,1+\alpha/2}(Q)} \leq C$.
\end{theorem}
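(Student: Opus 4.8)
The plan is to obtain the $\mathcal{C}^{2+\alpha,1+\alpha/2}$ regularity by a bootstrap: first produce a solution enjoying $L^p$ maximal regularity, upgrade it to Hölder continuity by Sobolev embedding, and then treat the lower-order terms as a Hölder-continuous source for the constant-coefficient heat operator, for which the classical parabolic Schauder estimate of \cite{LSU} (as recorded in the appendix of \cite{BHP-schauder}) applies. Throughout, $\sigma>0$ is the fixed diffusion constant, so uniform parabolicity holds automatically.

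First I would observe that, since $b$, $c$ and $h$ are Hölder continuous on the compact set $Q$, they are bounded, hence lie in $L^p(Q)$ with norms bounded by $CR$; likewise $u_0 \in \mathcal{C}^{2+\beta}(\mathbb{T}^d)$ embeds into $W^{p,2-2/p}(\mathbb{T}^d)$ with norm $\leq CR$. Theorem \ref{theo:max_reg1} then provides a unique solution $u \in W^{2,1,p}(Q)$ together with the a priori bound $\|u\|_{W^{2,1,p}(Q)} \leq C$, where $C$ depends only on $R$ (and on the fixed data $\sigma,T,p,d$). Applying Lemma \ref{lemma:max_reg_embedding}, I obtain $\delta \in (0,1)$ and a uniform estimate $\|u\|_{\mathcal{C}^\delta(Q)} + \|\nabla u\|_{\mathcal{C}^\delta(Q;\R^d)} \leq C$, which furnishes exactly the a priori Hölder continuity of $u$ and $\nabla u$ needed to handle the first-order term.

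Next I would set $\alpha = \min\{\beta,\delta\}$ and rewrite the equation as the heat equation
\begin{equation} \nonumber
\partial_t u - \sigma \Delta u = \tilde{h}, \qquad \tilde{h} := h - \langle b, \nabla u \rangle - c u, \qquad u(\cdot,0) = u_0.
\end{equation}
The key verification is that $\tilde{h} \in \mathcal{C}^{\alpha,\alpha/2}(Q)$ with $\|\tilde{h}\|_{\mathcal{C}^{\alpha,\alpha/2}(Q)} \leq C$. Indeed $h \in \mathcal{C}^{\beta,\beta/2} \subset \mathcal{C}^{\alpha,\alpha/2}$, while $u$ and $\nabla u$ are isotropically $\delta$-Hölder; on the bounded set $Q$ an isotropic $\delta$-Hölder bound controls the anisotropic $\mathcal{C}^{\alpha,\alpha/2}$ seminorm, since $\alpha/2 \leq \delta$. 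As $b,c \in \mathcal{C}^{\beta,\beta/2}$ are bounded and the product of two bounded $\mathcal{C}^{\alpha,\alpha/2}$ functions is again $\mathcal{C}^{\alpha,\alpha/2}$ with norm controlled by the product of the norms, both lower-order contributions are uniformly bounded, all constants depending only on $R$ and $\beta$.

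Finally I would invoke the classical Schauder estimate for the heat operator with Hölder source and initial datum $u_0 \in \mathcal{C}^{2+\beta}(\mathbb{T}^d) \subset \mathcal{C}^{2+\alpha}(\mathbb{T}^d)$ (see \cite[Chapter IV]{LSU}), giving $u \in \mathcal{C}^{2+\alpha,1+\alpha/2}(Q)$ with
\begin{equation} \nonumber
\|u\|_{\mathcal{C}^{2+\alpha,1+\alpha/2}(Q)} \leq C\left( \|\tilde{h}\|_{\mathcal{C}^{\alpha,\alpha/2}(Q)} + \|u_0\|_{\mathcal{C}^{2+\alpha}(\mathbb{T}^d)} \right) \leq C.
\end{equation}
Uniqueness is immediate from linearity, and on the torus there is no lateral spatial boundary, so no over-determined compatibility condition arises beyond the prescribed initial trace. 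The main obstacle I anticipate is bookkeeping rather than conceptual: carefully tracking the passage between the isotropic Hölder norm produced by the embedding and the anisotropic parabolic Hölder norms required by the Schauder theory, and checking that every constant depends only on $R$ and $\beta$ (and the fixed $\sigma,T,p,d$), hence is uniform over the admissible class of data $(u_0,b,c,h)$.
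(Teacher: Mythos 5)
Your proof is correct, but it does not follow the paper's route: the paper gives no argument of its own for Theorem \ref{theo:holder_reg_classical} and instead defers to the appendix of \cite{BHP-schauder}, where the result is obtained from the classical Schauder theory of \cite{LSU} for linear parabolic equations with H\"older-continuous coefficients, applied to the full operator $\partial_t - \sigma \Delta + \langle b, \nabla \cdot \rangle + c$. Your bootstrap is a genuinely different and self-contained alternative: you use the $L^p$ maximal regularity of Theorem \ref{theo:max_reg1} and the embedding of Lemma \ref{lemma:max_reg_embedding} to get a uniform $\mathcal{C}^\delta$ bound on $u$ and $\nabla u$, then absorb the lower-order terms into a H\"older source $\tilde{h}$ and invoke Schauder estimates only for the constant-coefficient heat operator, correctly noting that on $\mathbb{T}^d$ there is no lateral boundary and hence no compatibility condition. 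What each approach buys: yours needs only the simplest Schauder estimate plus results already recorded in the paper's appendices, at the price of losing the exponent ($\alpha = \min\{\beta,\delta\}$ with $\delta$ from the embedding), whereas a direct application of the variable-coefficient theory of \cite[Chapter IV]{LSU} is shorter and yields $\alpha = \beta$; since the statement only asserts the existence of some $\alpha \in (0,1)$, the loss is harmless. Your bookkeeping on converting the isotropic $\delta$-H\"older bound into the anisotropic $\mathcal{C}^{\alpha,\alpha/2}$ norm, and on the algebra property of $\mathcal{C}^{\alpha,\alpha/2}(Q)$, is accurate. One step you should make explicit: the Schauder existence theorem applied to the data $(\tilde{h}, u_0)$ produces some $v \in \mathcal{C}^{2+\alpha,1+\alpha/2}(Q)$, and $v$ must be identified with $u$; since $v$ also lies in $W^{2,1,p}(Q)$ and solves the same equation, the uniqueness assertion of Theorem \ref{theo:max_reg1} (with $b = 0$, $c = 0$ and source $\tilde{h}$) gives $u = v$, completing the argument.
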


\section{Maximum principle \label{appendix:maximum-principle}}

In this appendix we establish a maximum principle for the Fokker-Planck equation. We study the parabolic equation \eqref{eq:parabolic} with $h=0$,
\begin{equation}
\label{eq:fokkerplanck}
\begin{array}{rlr}
\partial_t m - \sigma \Delta m + \langle b, \nabla m \rangle+ c m = & 0, \quad & (x,t) \in Q, \\
m(x,0)= & m_0(x), & x \in \mathbb{T}^d.
\end{array}
\end{equation}
We assume that $m_0$ satisfies Assumption \eqref{ass_init_cond}
and define the mapping $\bar{\mathsf{m}} \colon L^\infty(Q; \mathbb{R}^d) \times L^\infty(Q)  \to W^{2,1,p}(Q)$ which associates to any $(b,c)$ the solution to \eqref{eq:fokkerplanck}. By Theorem \ref{theo:max_reg1} the mapping $\bar{\mathsf{m}}$ is well-defined.

\begin{lemma} \label{lemma:continuity-m-b-c}
The mapping $\bar{\mathsf{m}}$ is continuous.
\end{lemma}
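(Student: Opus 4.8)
The plan is to establish continuity sequentially. Fix a pair $(b,c)\in L^\infty(Q;\mathbb{R}^d)\times L^\infty(Q)$ together with a sequence $(b_n,c_n)\to(b,c)$ in this space, set $m=\bar{\mathsf{m}}(b,c)$ and $m_n=\bar{\mathsf{m}}(b_n,c_n)$, and aim to show $\|m_n-m\|_{W^{2,1,p}(Q)}\to 0$. First I would write the equation solved by the difference $\delta_n:=m_n-m$. Subtracting the two copies of \eqref{eq:fokkerplanck} and using the identities $\langle b_n,\nabla m_n\rangle-\langle b,\nabla m\rangle=\langle b_n,\nabla\delta_n\rangle+\langle b_n-b,\nabla m\rangle$ and $c_n m_n-c m=c_n\delta_n+(c_n-c)m$, one finds that $\delta_n$ solves the linear parabolic equation \eqref{eq:parabolic} with convection coefficient $b_n$, reaction coefficient $c_n$, vanishing initial datum (both solutions start from $m_0$), and source term
\begin{equation} \nonumber
h_n:=-\langle b_n-b,\nabla m\rangle-(c_n-c)m.
\end{equation}

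Next I would estimate the source. Since $m=\bar{\mathsf{m}}(b,c)\in W^{2,1,p}(Q)$, both $m$ and $\nabla m$ lie in $L^p(Q)$, whence
\begin{equation} \nonumber
\|h_n\|_{L^p(Q)}\le \|b_n-b\|_{L^\infty}\,\|\nabla m\|_{L^p(Q)}+\|c_n-c\|_{L^\infty}\,\|m\|_{L^p(Q)}\longrightarrow 0.
\end{equation}
Because $(b_n,c_n)$ converges it is bounded in $L^\infty$, and since $Q$ is bounded we get a uniform bound $\|b_n\|_{L^p(Q)}+\|c_n\|_{L^p(Q)}\le R$. I would then invoke the linear maximal-regularity estimate underlying Theorem \ref{theo:max_reg1}: for coefficients bounded by $R$, the solution map $(u_0,h)\mapsto u$ of \eqref{eq:parabolic} is a bounded linear operator from $W^{2-2/p,p}(\mathbb{T}^d)\times L^p(Q)$ into $W^{2,1,p}(Q)$, with operator norm depending only on $R$. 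Applied to $\delta_n$, whose initial datum vanishes, this yields $\|\delta_n\|_{W^{2,1,p}(Q)}\le C(R)\,\|h_n\|_{L^p(Q)}\to 0$, which is the claim. (Note this argument in fact gives local Lipschitz continuity, since the bound is linear in $\|b_n-b\|_{L^\infty}+\|c_n-c\|_{L^\infty}$.)

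The main obstacle is the passage from the a priori bound stated in Theorem \ref{theo:max_reg1} (namely $\|u\|_{W^{2,1,p}}\le C$ under the normalization that all data have norm $\le R$) to the homogeneous, linear-in-data estimate $\|u\|_{W^{2,1,p}}\le C(R)\bigl(\|u_0\|_{W^{2-2/p,p}}+\|h\|_{L^p}\bigr)$ used above. This is legitimate by the linearity of \eqref{eq:parabolic} and the uniqueness part of the theorem: for fixed $(b,c)$ the solution map is linear, so the quoted bound extends to arbitrary data by scaling. I would emphasize that one cannot instead transfer the lower-order terms $-\langle b_n,\nabla\delta_n\rangle-c_n\delta_n$ into the source and appeal to the simpler Theorem \ref{thm:bound-u-u0-h} (which requires $b=c=0$): the resulting right-hand side would contain $\|\nabla\delta_n\|_{L^p}$, a part of $\|\delta_n\|_{W^{2,1,p}}$ that cannot be absorbed unless $C\|b_n\|_{L^\infty}<1$. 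Keeping the convection and reaction terms on the left and using the genuine maximal-regularity estimate is therefore essential; the alternative of localizing in time and iterating would work but is more cumbersome.
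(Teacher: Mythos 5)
Your argument is correct, but it follows a genuinely different route from the paper. The paper's proof is non-quantitative: it introduces the map $\varphi[m,b,c] = \bigl(m(0,\cdot)-m_0,\ \partial_t m - \sigma\Delta m + \langle b,\nabla m\rangle + cm\bigr)$, checks that it is $C^\infty$ (being built from bounded linear and bilinear maps), shows that $D_m\varphi[m,b,c]$ is a bijection of $W^{2,1,p}(Q)$ onto $W^{p,2-2/p}(\mathbb{T}^d)\times L^p(Q)$ via the existence-and-uniqueness part of Theorem \ref{theo:max_reg1}, and concludes by the implicit function theorem --- which in fact delivers local $C^\infty$ dependence of $\bar{\mathsf{m}}$ on $(b,c)$, not merely continuity, and sidesteps the need for any explicit homogeneous estimate (the boundedness of $D_m\varphi^{-1}$ comes for free from the open mapping theorem). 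Your proof instead writes the equation for the difference $\delta_n=m_n-m$ with frozen coefficients $(b_n,c_n)$, source $h_n=-\langle b_n-b,\nabla m\rangle-(c_n-c)m$ and zero initial datum, and closes with a maximal-regularity bound linear in the data. The two points you flag as delicate are exactly the right ones and are handled correctly: the upgrade of the a priori bound of Theorem \ref{theo:max_reg1} to the estimate $\|\delta_n\|_{W^{2,1,p}(Q)}\le C(R)\|h_n\|_{L^p(Q)}$ is legitimate by linearity, uniqueness and scaling (with $C(R)$ uniform in $n$ since the convergent sequence $(b_n,c_n)$ is bounded in $L^\infty$, hence in $L^p$ on the bounded set $Q$); and you are right that moving $\langle b_n,\nabla\delta_n\rangle+c_n\delta_n$ to the right-hand side and using Theorem \ref{thm:bound-u-u0-h} would not close without a smallness condition. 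The payoff of your version is an explicit local Lipschitz modulus for $\bar{\mathsf{m}}$; the payoff of the paper's version is higher-order smoothness with less bookkeeping.
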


\begin{proof}
Consider the mapping $\varphi : W^{2,1,p}(Q) \times L^\infty(Q; \mathbb{R}^d) \times L^\infty(Q) \to W^{p,2-2/p}(\mathbb{T}^d) \times L^p(Q)$ defined by
\begin{equation} \nonumber
\varphi[m,b,c] = (m(0,\cdot) - m_0(\cdot), \partial_t m - \sigma \Delta m + \langle b, \nabla m \rangle+ c m).
\end{equation}
We define
\begin{equation} \nonumber
\begin{array}{llll}
\varphi_0[m] & = m(0,\cdot), & \varphi_2[m,b] & = \langle b, \nabla m \rangle,  \\
\varphi_1[m] & =  \partial_t m - \sigma \Delta m, \qquad & \varphi_3[m,c] & = cm,
\end{array}
\end{equation}
so that $\varphi[m,b,c] = (\varphi_0[m] - m_0(\cdot),  \varphi_1[m] + \varphi_2[m,b] + \varphi_3[m,c])$. By Theorem \ref{theo:max_reg1} and Theorem \ref{LAD-l3.4p82}, there exists a constant $C >0$ such that
\begin{equation} \nonumber
\begin{array}{llll}
 \| \varphi_0[m] \|_{W^{p,2-2/p}(\mathbb{T}^d)} \! \! & \leq C \| m \|_{W^{2,1,p}(Q)}, & \| \varphi_2[m,b]  \|_{L^p(Q)} \! \! \! \! & \leq \|b\|_{L^{\infty}(Q)} \| m \|_{W^{2,1,p}(Q)},\\
\|\varphi_1[m] \|_{L^p(Q)} & \leq C \| m \|_{W^{2,1,p}(Q)}, \quad & \| \varphi_3[m,c]  \|_{L^p(Q)} \! \! \! \! & \leq \|c\|_{L^{\infty}(Q)}  \| m \|_{W^{2,1,p}(Q)}.
\end{array}
\end{equation}
Thus $\varphi_0$ and $\varphi_1$ (resp.\@ $\varphi_2$ and $\varphi_3$) are  $C^\infty$ as bounded linear (resp.\@ bi-linear) applications. It follows that $\varphi$ is $C^\infty$.
Let $(m,b,c) \in W^{2,1,p}(Q) \times L^p(Q; \mathbb{R}^d) \times L^p(Q)$ be such that $\varphi[m,b,c] = 0$. For any direction $z \in W^{2,1,p}(Q)$, we have
\begin{equation} \nonumber
D_m \varphi [m,b,c] z  = (z(0,\cdot), \partial_t z - \sigma \Delta z + \langle b, \nabla z \rangle + c z).
\end{equation}
For any $(h_0,h_1) \in  W^{p,2-2/p}(\mathbb{T}^d) \times L^p(Q)$, the equation
\begin{equation}\nonumber
\begin{array}{rlr}
\partial_t z - \sigma \Delta z + \langle b, \nabla z \rangle + c z = & h_1, \quad & (x,t) \in Q, \\
z(x,0)= & h_0, & x \in \mathbb{T}^d,
\end{array}
\end{equation} 
has a unique solution $z \in W^{2,1,p}(Q)$, by Theorem \ref{theo:max_reg1}. Then $D_m \varphi [m,b,c]$ is bijective and thus invertible. The conclusion follows by the implicit function theorem.
\end{proof}

\begin{lemma} \label{lemma:maximum-principle}
Let $v \in W^{1,0,\infty}(Q;\mathbb{R}^d)$ and let 
$m = \bar{\mathsf{m}}[v,\nabla \cdot v] \in W^{2,1,p}(Q)$ be the solution to \eqref{eq:fokkerplanck} with $(b,c) = (v, \nabla \cdot v)$.
Assume that $m_0(x) \geq \varepsilon_0>0$ for any $x \in \mathbb{T}^d$.
Then
\begin{equation}  \label{eq:maximum-principle}
m(x,t) \geq \varepsilon_0  \exp \big( - T \| \nabla \cdot v \|_{L^\infty(Q)} \big), \quad \forall (x,t) \in Q.
\end{equation}
\end{lemma}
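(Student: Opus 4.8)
The plan is to prove the bound by a comparison argument: I will exhibit an explicit, spatially constant subsolution and show that $m$ stays above it. Set $\lambda = \| \nabla \cdot v \|_{L^\infty(Q)}$ and define the barrier $\underline{m}(x,t) = \varepsilon_0\, e^{-\lambda t}$. Writing $\mathcal{L} m = \partial_t m - \Delta m + \langle v, \nabla m \rangle + (\nabla \cdot v)\, m$ for the Fokker--Planck operator appearing in \eqref{eq:fokkerplanck} with $(b,c) = (v, \nabla \cdot v)$, a direct computation gives
\[
\mathcal{L}\underline{m} = \varepsilon_0\, e^{-\lambda t}\big( \nabla \cdot v - \lambda \big) \le 0,
\]
since $\nabla \cdot v \le \lambda$ pointwise. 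Thus $\underline{m}$ is a subsolution, while $m$ solves $\mathcal{L} m = 0$, and at the initial time $\underline{m}(x,0) = \varepsilon_0 \le m_0(x) = m(x,0)$ by assumption.

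It then remains to establish the comparison $m \ge \underline{m}$ on $Q$. I would set $w = m - \underline{m}$, which satisfies $\mathcal{L} w \ge 0$ with $w(\cdot,0) \ge 0$, and prove $w \ge 0$ by an $L^2$ energy estimate on its negative part $w^- = \max(-w,0)$. Multiplying $\mathcal{L} w \ge 0$ by $-w^-$, integrating over $\mathbb{T}^d$, and integrating the Laplacian and drift terms by parts (the torus has no boundary), the identities $\nabla w^- = -\nabla w\, \mathbf{1}_{\{w<0\}}$ and $w(-w^-) = (w^-)^2$ on $\{w<0\}$ yield
\[
\frac{1}{2}\frac{\dd}{\dd t}\|w^-\|_{L^2(\mathbb{T}^d)}^2 + \|\nabla w^-\|_{L^2(\mathbb{T}^d)}^2 + \frac{1}{2}\int_{\mathbb{T}^d}(\nabla \cdot v)(w^-)^2 \, \dd x \le 0.
\]
Bounding the last term from below by $-\tfrac{\lambda}{2}\|w^-\|_{L^2(\mathbb{T}^d)}^2$ and discarding the nonnegative gradient term gives the differential inequality $\frac{\dd}{\dd t}\|w^-\|_{L^2(\mathbb{T}^d)}^2 \le \lambda \|w^-\|_{L^2(\mathbb{T}^d)}^2$. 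Since $\|w^-(0)\|_{L^2(\mathbb{T}^d)} = 0$, Gr\"onwall's lemma forces $w^- \equiv 0$, i.e.\ $m \ge \underline{m}$. Finally, $e^{-\lambda t} \ge e^{-\lambda T}$ for $t \in [0,T]$ delivers the claimed uniform bound $m(x,t) \ge \varepsilon_0\, e^{-T\| \nabla \cdot v \|_{L^\infty(Q)}}$.

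The main obstacle is not the algebra but the rigorous justification of the energy estimate at the available regularity. Here $m \in W^{2,1,p}(Q)$ with $p > d+2$, so by Lemma \ref{lemma:max_reg_embedding} both $m$ and $\nabla m$ are (H\"older) continuous and $\partial_t m,\, D^2_{xx} m \in L^p(Q)$; this is what makes $w^-(\cdot,t) \in W^{1,2}(\mathbb{T}^d)$ and the map $t \mapsto \|w^-(t)\|_{L^2(\mathbb{T}^d)}^2$ absolutely continuous with the computed derivative. One must invoke the chain rule for the Lipschitz truncation $s \mapsto \max(-s,0)$ applied to Sobolev functions to validate both the integration by parts and the differentiation under the integral sign. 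Alternatively, since $m$ is in fact continuous on the compact set $\overline{Q}$, one may appeal directly to a classical weak minimum principle for uniformly parabolic operators with bounded coefficients, the exponential weight $e^{-\lambda t}$ being precisely the device that absorbs the a priori sign-indefinite zeroth-order coefficient $\nabla \cdot v$.
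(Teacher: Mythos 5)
Your proof is correct, but it takes a genuinely different route from the paper. The paper argues pointwise: for smooth $v$ (so that $m$ is a classical $C^{2,1}$ solution), it introduces the auxiliary function $y(x,t)=e^{-\kappa t}\bigl(m(x,t)-\varepsilon_0 e^{-\kappa t}\bigr)$ with $\kappa>\|\nabla\cdot v\|_{L^\infty(Q)}$, evaluates the equation at an interior minimum of $y$ to reach a contradiction, and then lets $\kappa$ decrease to $\|\nabla\cdot v\|_{L^\infty(Q)}$; since this argument needs second derivatives pointwise, a second step mollifies a general $v\in W^{1,0,\infty}(Q;\mathbb{R}^d)$ and passes to the limit using the continuity of $\bar{\mathsf{m}}$ established in Lemma \ref{lemma:continuity-m-b-c}. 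You instead run a weak comparison argument via the Stampacchia truncation $w^-=\max(\underline{m}-m,0)$ and an $L^2$ energy estimate; your algebra is right (the subsolution inequality $\mathcal{L}\underline{m}\le 0$, the signs of the three integrated terms, and the resulting Gr\"onwall inequality all check out), and the regularity $m\in W^{2,1,p}(Q)$ with $p>d+2$ is more than enough to justify the chain rule for the truncation and the absolute continuity of $t\mapsto\|w^-(t)\|_{L^2(\mathbb{T}^d)}^2$, as you correctly flag. What your approach buys is that it works directly at the available regularity of $v$, so the entire density/mollification step and the continuity lemma for $\bar{\mathsf{m}}$ become unnecessary, and you can take $\lambda=\|\nabla\cdot v\|_{L^\infty(Q)}$ at once rather than approximating from above; what the paper's approach buys is that it stays entirely elementary (no Sobolev chain rule, no energy identities), at the price of the approximation machinery.
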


\begin{proof}
We first prove the result when $v \in \mathcal{C}^{1+\alpha,\alpha/2}(Q;\R^d)$, for some $\alpha \in (0,1)$.
By Theorem \ref{theo:holder_reg_classical}, $m \in C^{2,1}(Q)$. Let $\kappa > \| \nabla \cdot v \|_{L^\infty(Q)}$.
We define
\begin{equation}\nonumber
y(x,t) = e^{-\kappa t} \left(m(x,t)- \varepsilon_0  e^{-\kappa t} \right),
\end{equation}
for all $(x,t) \in Q$. By a direct computation we have
\begin{align}  \nonumber
\partial_t y(x,t) = - y(x,t)(\kappa - \nabla \cdot v (x,t)) + \Delta y(x,t) + \langle v(x,t), \nabla y(x,t)\rangle \\ + \varepsilon_0 e^{- 2\kappa  t} \left(\kappa +  \nabla \cdot v (x,t) \right). \label{partial-y}
\end{align}
Next we show that $y(x,t) \geq 0$ for all $(x,t) \in Q$.
Let $(x_0,t_0) \in \argmin_{(x,t) \in Q} y(x,t)$. Let us assume, by a way of contradiction, that $y(x_0,t_0) < 0$. Since $y(0,x) \geq 0$ for any $x \in \mathbb{T}^d$, we have that $t_0 > 0$ and thus $\partial_t y(x_0,t_0) \leq 0$. Since $x_0 \in \mathbb{T}^d$, we have that $\nabla y(x_0,t_0) = 0$. Moreover, since $m$ is twice differentiable with respect to its second variable, we have that $\Delta y (x_0,t_0) \geq 0$. Then it follows from \eqref{partial-y} that
\begin{equation} \nonumber
\partial_t y(x_0,t_0) \geq - y(x_0,t_0) (\kappa  - \nabla \cdot v (x_0,t_0)) \\ + \varepsilon_0 e^{-2 \kappa  t_0} \left(\kappa +  \nabla \cdot v (x_0,t_0) \right).
\end{equation}
The right-hand side is positive since $\kappa > \| \nabla \cdot v \|_{L^\infty(Q)}$. This contradicts the inequality $\partial_t y(x_0,t_0) \leq 0$ and proves that $y(x,t) \geq 0$, for any $(x,t) \in Q$. It follows then from the definition of $y$ that
$m(x,t) \geq \varepsilon_0  e^{-\kappa t} $, for any $(x,t) \in Q$. Passing to the limit when $\kappa \rightarrow $ yields \eqref{eq:maximum-principle}.

We now consider the general case when $v \in W^{1,0,\infty}(Q;\mathbb{R}^d)$ and proceed by density. Let $(\rho_k)_{k \in \mathbb{N}}$ be a sequence of regularizing kernels in $C^\infty(Q)$. We define $v_k = \rho_k * v \in C^\infty(Q;\mathbb{R}^d)$, where $*$ is the convolution product. We next define $m_k = \bar{\mathsf{m}}[v_k,\nabla \cdot v_k]$ for any $k\in \mathbb{N}$.
Applying \eqref{eq:maximum-principle} to $m_k$, we obtain that
\begin{equation*}
m_k(x,t) \geq \varepsilon_0  \exp \big( - T \| \nabla \cdot v_k \|_{L^\infty(Q)} \big), \quad \forall (x,t) \in Q.
\end{equation*}
Since $v_k$ (resp.\@ $\nabla v_k$) uniformly converges to $v$ (resp.\@ $\nabla v$) and since $\bar{\mathsf{m}}$ is continuous for the uniform topology, we deduce that $m_k$ converges to $m$ in $W^{2,1,p}(Q)$ and finally that $m_k$ uniformly converges to $m$, by Lemma \ref{lemma:max_reg_embedding}. This allows us to pass to the limit in the above inequality, which concludes the proof of the lemma.
\end{proof}

\section{Existence of a classical solution to the Hamilton-Jacobi-Bellman equation \label{Appendix:HJB}}

In this appendix we prove Lemma \ref{eq:HJB-unique-solution}, that is, we establish the existence of a solution to the Hamilton-Jacobi-Bellman equation
\begin{equation} \label{eq:HJB-appendix}
\begin{array}{rlr}
- \partial_t u - \Delta u + \bm{H}[  \nabla u + A^\star P] = & \gamma, & (x,t) \in Q, \\
 u(x,T) = &  g(x), & x\in \mathbb{T}^d,
\end{array}
\end{equation}
in $C^{2,1}(Q)$. By classical, we mean that \eqref{eq:HJB-appendix} can be understood in a pointwise manner.
We recall that $g \in \mathcal{C}^{2 +\alpha}(\mathbb{T}^d)$ and that $(\gamma,P) \in \mathcal{U}^\beta$ (defined in \eqref{eq:setUbeta}). 
Moreover, the constant $R >0$ is such that
\begin{equation} \label{appendix:estim-ugp}
 \|\gamma\|_{L^\infty(Q)} + \|\nabla \gamma\|_{L^\infty(Q;\mathbb{R}^d)} + \| P \|_{L^\infty(0,T;\mathbb{R}^k)} \leq R.
\end{equation}

The proof of the lemma relies on a fixed point approach. To this purpose, we introduce the mapping $\mathcal{T} \colon W^{2,1,p}(Q) \times [0,1] \to W^{2,1,p}(Q)$ which associates to any $u \in W^{2,1,p}(Q)$ the classical solution $\mathcal{T}[u,\tau]$ to the linear parabolic equation
\begin{equation} \nonumber
\begin{array}{rlr}
- \partial_t \tilde{u} - \Delta \tilde{u} + \tau \bm{H}[\nabla u + A^\star P] = & \tau \gamma & (x,t) \in Q, \\
 \tilde{u}(x,T) = &  \tau g(x) & x\in \mathbb{T}^d.
\end{array}
\end{equation}
For any $(u,\tau) \in W^{2,1,p}(Q) \times [0,1]$, we have $\tau \big( \gamma - \bm{H}[\nabla u + A^\star P] \big) \in L^\infty(Q)$, thus $\mathcal{T}[u,\tau]$ lies in $W^{2,1,p}(Q)$, by Theorem \ref{theo:max_reg1}, proving that $\mathcal{T}$ is well-defined.

\begin{lemma} \label{lemma:cont-comp-T}
 The mapping $\mathcal{T} \colon W^{2,1,p}(Q) \times [0,1] \to W^{2,1,p}(Q)$ is continuous and compact. In addition, for all $K >0$, there exists $\alpha \in (0,1)$ and $C>0$ depending on $K$, $\gamma$, and $P$ such that $\|u\|_{W^{2,1,p}(Q)} \leq K$ implies $\|\mathcal{T}[u,\tau]\|_{\mathcal{C}^{2+\alpha,1+ \alpha/2}(Q)} \leq C$.
\end{lemma}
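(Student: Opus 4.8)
The plan is to prove the three assertions in the order: the a priori Hölder bound first, then compactness as a direct consequence, and finally continuity via a linear estimate on the difference. The structural observation underlying everything is that $\mathcal{T}[u,\tau]$ solves a \emph{linear} backward parabolic equation whose only dependence on $u$ sits in the source term $\tau\bigl(\gamma - \bm{H}[\nabla u + A^\star P]\bigr)$; after the time reversal $s = T-t$ this becomes a standard forward equation with zero drift and zero zeroth-order coefficient, so the estimates recalled in Appendix B apply directly.

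First I would establish the last assertion, from which compactness will follow. Assume $\|u\|_{W^{2,1,p}(Q)} \leq K$. By Lemma \ref{lemma:max_reg_embedding}, $\nabla u \in \mathcal{C}^\delta(Q;\R^d)$ with $\|\nabla u\|_{\mathcal{C}^\delta} \leq CK$; in particular $\nabla u$ takes its values in a fixed ball $B(\R^d,R')$ with $R'$ depending only on $K$ and $R$. Since $a$ is Hölder by \eqref{ass_holder} and $P \in \mathcal{C}^\beta(0,T;\R^k)$, the term $A^\star P = a^\star P$ is Hölder as well, so $\nabla u + A^\star P$ is Hölder with values in a fixed ball. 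Lemma \ref{lemma:reg_H} gives that $H$ is Hölder on $B_{R'}$, whence the composition $\bm{H}[\nabla u + A^\star P]$ lies in $\mathcal{C}^{\beta',\beta'/2}(Q)$ for some $\beta' \in (0,1)$, with norm controlled by $K$, $\gamma$, $P$. As $\gamma$ is itself Hölder, the source $\tau\bigl(\gamma - \bm{H}[\nabla u + A^\star P]\bigr)$ is bounded in $\mathcal{C}^{\beta',\beta'/2}(Q)$ uniformly in $\tau \in [0,1]$, and the terminal datum $\tau g$ is bounded in $\mathcal{C}^{2+\alpha_0}(\mathbb{T}^d)$ by \eqref{ass_init_cond}. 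The Schauder estimate of Theorem \ref{theo:holder_reg_classical} (applied after time reversal, with $b=0$, $c=0$) then yields $\|\mathcal{T}[u,\tau]\|_{\mathcal{C}^{2+\alpha,1+\alpha/2}(Q)} \leq C$ for some $\alpha \in (0,1)$ and $C>0$ depending only on $K$, $\gamma$, $P$.

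Compactness is then immediate. If $B \subset W^{2,1,p}(Q) \times [0,1]$ is bounded, the bound just proved shows $\mathcal{T}(B)$ is bounded in $\mathcal{C}^{2+\alpha,1+\alpha/2}(Q)$. Since $Q = \mathbb{T}^d \times [0,T]$ is compact, such a family, together with its derivatives $\partial_t$, $\nabla$ and $D^2_{xx}$, is uniformly bounded and uniformly Hölder, hence equicontinuous; Arzelà–Ascoli provides subsequences converging in $\mathcal{C}^{2,1}(Q)$, and uniform convergence of these derivatives on $Q$ implies convergence in $W^{2,1,p}(Q)$. Thus $\mathcal{T}(B)$ is relatively compact in $W^{2,1,p}(Q)$. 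For continuity, let $(u_n,\tau_n) \to (u,\tau)$ in $W^{2,1,p}(Q) \times [0,1]$. The difference $\mathcal{T}[u_n,\tau_n] - \mathcal{T}[u,\tau]$ solves the linear backward equation with operator $-\partial_t - \Delta$, terminal datum $(\tau_n-\tau)g$, and source $h_n - h$, where $h_n = \tau_n\bigl(\gamma - \bm{H}[\nabla u_n + A^\star P]\bigr)$. By the $L^p$ estimate of Theorem \ref{thm:bound-u-u0-h} (after time reversal), it suffices to show $\|h_n - h\|_{L^p(Q)} \to 0$, the terminal contribution being clearly negligible. Splitting
\[
h_n - h = (\tau_n-\tau)\bigl(\gamma - \bm{H}[\nabla u + A^\star P]\bigr) - \tau_n\bigl(\bm{H}[\nabla u_n + A^\star P] - \bm{H}[\nabla u + A^\star P]\bigr),
\]
the first term tends to $0$ in $L^p$ since its bracket is bounded; for the second, $\nabla u_n$ is uniformly bounded and converges to $\nabla u$ in $L^p$, so the arguments stay in a fixed ball on which $H_p$ is bounded, and the mean value inequality gives $\bigl|\bm{H}[\nabla u_n + A^\star P] - \bm{H}[\nabla u + A^\star P]\bigr| \leq C|\nabla u_n - \nabla u|$, which converges to $0$ in $L^p$.

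The main obstacle I anticipate is precisely this continuity step: one must verify that the nonlinear Nemytskii operator $u \mapsto \bm{H}[\nabla u + A^\star P]$ is continuous from $W^{2,1,p}(Q)$ into $L^p(Q)$, which relies on confining the arguments to a fixed ball so that the only locally Lipschitz Hamiltonian $H$ can be controlled uniformly. The a priori bound and the compactness claim are, by contrast, comparatively routine consequences of the Schauder and maximal-regularity estimates of Appendix B, combined with the compact embedding $\mathcal{C}^{2+\alpha,1+\alpha/2}(Q) \hookrightarrow W^{2,1,p}(Q)$.
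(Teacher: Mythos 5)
Your proposal is correct and follows essentially the same route as the paper's proof: the embedding of Lemma \ref{lemma:max_reg_embedding} together with the H\"older regularity of $H$ (Lemma \ref{lemma:reg_H}) gives a H\"older bound on the source term, Theorem \ref{theo:holder_reg_classical} and Arzel\`a--Ascoli yield the a priori bound and compactness, and the linear estimate of Theorem \ref{thm:bound-u-u0-h} gives continuity. Your treatment of the continuity step is in fact slightly more explicit than the paper's (which invokes only ``continuity of the Hamiltonian'' where you spell out the local Lipschitz bound on a fixed ball), but the argument is the same.
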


\begin{proof}
\noindent \textit{Step 1: Continuity of $\mathcal{T}$.}
Let $(u_k,\tau_k) \in W^{2,1,p}(Q) \times [0,1]$ be a sequence converging to $(u,\tau) \in W^{2,1,p}(Q) \times [0,1]$. Then $\nabla u_k \to \nabla u$ in $L^\infty(Q;\mathbb{R}^d)$ by Lemma \ref{lemma:max_reg_embedding}. Then $\tau_k (\gamma - \bm{H}[\nabla u_k + A^\star P]) \to \tau (\gamma - \bm{H}[\nabla u + A^\star P])$ in $L^\infty(Q;\mathbb{R}^d)$ by continuity of the Hamiltonian (see Lemma \ref{lemma:reg_H}). Finally the continuity of $\mathcal{T}$ follows by Theorem \ref{thm:bound-u-u0-h}.

\noindent \textit{Step 2: Compactness of $\mathcal{T}$.} Let $K>0$ and let $(u,\tau) \in W^{2,1,p}(Q) \times [0,1]$ be such that $\|u\|_{W^{2,1,p}(Q)} + |\tau| < K$. 
Combining Lemma \ref{lemma:max_reg_embedding} and Lemma \ref{lemma:reg_H} there exist $\alpha \in (0,1)$ and $C>0$ such that $ \|\tau(\gamma - \bm{H}[\nabla u + A^\star P])\|_{C^\alpha(Q)} < C$. Then applying Theorem \ref{theo:holder_reg_classical}, there exist $\alpha \in (0,1)$ and $C>0$ such that $\|\mathcal{T}[u,\tau]\|_{\mathcal{C}^{2+\alpha,1+ \alpha/2}(Q)} < C$. By the Arzela-Ascoli Theorem the centered ball of $\mathcal{C}^{2+\alpha,1+ \alpha/2}(Q)$ of radius $C>0$ is a relatively compact subset of $W^{2,1,p}(Q)$. As a consequence $\mathcal{T}[u,\tau]$ is a compact mapping and the conclusion follows.
\end{proof}

\begin{theorem}{\normalfont (Leray-Schauder)} \label{thm:Leray-schauder}
Let $X$ be a Banach space and let $T : X \times [0, 1] \to X$ be a continuous and compact mapping. Assume that $T (x,0) = 0$ for all $x\in X$ and assume there exists $C>0$ such that $\|x\|_X < C$ for all $(x,\tau) \in X\times [0,1]$ such that $T (x,\tau) = x$. Then, there exists $x \in X$ such that $T(x,1) = x$.
\end{theorem}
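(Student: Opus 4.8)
The plan is to deduce this from the Leray--Schauder topological degree, whose homotopy invariance is exactly suited to the present hypotheses. Write $\overline{B}_C$ for the closed ball of radius $C$ centered at the origin of $X$, $\partial B_C$ for its boundary, and set $\Psi_\tau := I - T(\cdot,\tau)$, a compact perturbation of the identity for each $\tau \in [0,1]$. The goal is to show that $\deg(\Psi_1, B_C, 0) \neq 0$, which by the solution property of the degree forces the existence of $x \in B_C$ with $\Psi_1(x) = 0$, i.e.\ $T(x,1) = x$.

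First I would check that $\deg(\Psi_\tau, B_C, 0)$ is well defined for every $\tau$, which requires that $\Psi_\tau$ not vanish on $\partial B_C$. This is precisely the role of the a priori bound: if $\Psi_\tau(x) = 0$ then $T(x,\tau) = x$, so $\|x\|_X < C$ and $x \notin \partial B_C$. Since $T$ is jointly continuous and compact and $T(x,\tau) \neq x$ on $\partial B_C \times [0,1]$, the map $\tau \mapsto \deg(\Psi_\tau, B_C, 0)$ is constant by homotopy invariance. Finally, the assumption $T(\cdot,0) = 0$ gives $\Psi_0 = I$, whence $\deg(\Psi_0, B_C, 0) = 1$ by normalization (as $0 \in B_C$). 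Homotopy invariance then yields $\deg(\Psi_1, B_C, 0) = 1 \neq 0$, and the conclusion follows.

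The short homotopy argument is not the difficulty; the real content lies in the construction of the Leray--Schauder degree and its three structural properties (normalization, homotopy invariance, solution property). This is classical: one approximates each compact map $T(\cdot,\tau)$ uniformly by finite-rank maps, applies the Brouwer degree on a finite-dimensional subspace, and verifies that the resulting integer is independent of the approximation. I would cite this construction (e.g.\ Gilbarg--Trudinger, or Deimling's \emph{Nonlinear Functional Analysis}) rather than reproduce it. I would also flag that the more elementary reduction to Schauder's fixed point theorem via the radial retraction $r(x) = \min\{1, C/\|x\|_X\}\,x$ only handles the scalar homotopy $T(x,\tau) = \tau\, T(x,1)$ (Schaefer's theorem) and does not exploit the a priori bound on the full family $T(\cdot,\tau)$; the general homotopy stated here genuinely needs the degree.
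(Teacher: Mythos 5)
The paper offers no proof of this statement: it is a classical theorem quoted verbatim, and the paper's ``proof'' is the single line ``See \cite[Theorem 11.6]{gilbarg2015elliptic}.'' Your degree-theoretic argument is correct and complete at the appropriate level of detail: the a priori bound keeps the zeros of $\Psi_\tau$ off $\partial B_C$ so that $\deg(\Psi_\tau,B_C,0)$ is admissible for every $\tau$, homotopy invariance transports the normalization $\deg(I,B_C,0)=1$ from $\tau=0$ to $\tau=1$, and the solution property concludes. Since the paper itself treats the result as a black box, delegating the construction of the Leray--Schauder degree to a reference is entirely reasonable; your route is simply a different (and heavier) classical path to the same statement.

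One correction to your closing remark, however: the general homotopy does \emph{not} ``genuinely need the degree.'' The proof in Gilbarg--Trudinger---the very reference the paper cites, whose fixed-point chapter develops no degree theory---derives the full statement from Schauder's fixed point theorem by folding the homotopy parameter into the radial variable. Concretely, define on $\bar B_{2C}$ the compact map
\[
T^*x=\begin{cases} T(x,1), & \|x\|_X\le C,\\ T\bigl(x,\,2-\|x\|_X/C\bigr), & C\le\|x\|_X\le 2C,\end{cases}
\]
which is continuous (the two branches agree at $\|x\|_X=C$) and vanishes on $\partial B_{2C}$ because $T(\cdot,0)=0$. Composing with the radial retraction $r$ onto $\bar B_{2C}$ and applying Schauder's theorem gives a point with $x=r(T^*x)$; the case $\|T^*x\|_X>2C$ would force $\|x\|_X=2C$ and hence $T^*x=T(x,0)=0$, a contradiction, so in fact $x=T^*x$. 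A fixed point with $C\le\|x\|_X\le 2C$ would satisfy $x=T(x,\sigma)$ with $\sigma=2-\|x\|_X/C\in[0,1]$, contradicting the a priori bound; hence $\|x\|_X<C$ and $x=T(x,1)$. So the elementary reduction does exploit the bound on the whole family $T(\cdot,\tau)$, and the degree, while perfectly valid and more informative, is not necessary here.
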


\begin{proof}
See \cite[Theorem 11.6]{gilbarg2015elliptic}.
\end{proof}

\begin{proof}[Proof of Lemma \ref{eq:HJB-unique-solution}]
\noindent \textit{Step 1: Existence of a classical solution.}
 We have that $\mathcal{T}[u,0] = 0$ for all $u \in W^{2,1,p}(Q)$. Now let $(u,\tau) \in W^{2,1,p}(Q) \times [0,1]$ such that $\mathcal{T}[u,\tau] = u$. From Lemma \ref{lemma:cont-comp-T}, the mapping $\mathcal{T}$ is continuous and compact, in addition $u$ is a classical solution and thus the viscosity solution to the Hamilton-Jacobi-Bellman equation
\begin{equation} \nonumber
\begin{array}{rlr}
- \partial_t u - \Delta u + \tau \bm{H}[\nabla u + A^\star P] = & \tau \gamma & (x,t) \in Q, \\
 u(x,T) = &  \tau g(x) & x\in \mathbb{T}^d,
\end{array}
\end{equation}
and can be interpreted as the value function associated to the following stochastic control problem
\begin{equation} \nonumber
\inf_{\nu \in L_{\mathbb{F}}^2(0,T;\mathbb{R}^d)} 
 \tau \mathbb{E} \left[\int_0^T  L(X^\tau_s,s,\nu_s) + \langle A^\star [P](X^\tau_s,s) , \nu_s \rangle + \gamma(X^\tau_s,s) \dd s + g(X^\tau_T) \right],
\end{equation}
where $(X^\tau_{s})_{s \in [t,T]}$ is the solution to $\dd X_s = \tau \nu_s \dd s + \sqrt{2} \dd B_s$, $X_0 = Y$. Following \cite[Proposition 1, Step 2]{BHP-schauder}, there exists a constant $C>0$, depending only on $R$, such that $\|u\|_{L^\infty(Q)} + \|\nabla u\|_{L^\infty(Q;\mathbb{R}^d)} \leq C$.
Assumption \eqref{appendix:estim-ugp} yields that $\| \bm{H}[\nabla u + A^\star P] - \gamma\|_{L^\infty(Q)} \leq C$. Then $u$ is the solution to a parabolic PDE with bounded coefficients and thus $\| u \|_{W^{2,1,p}(Q)} \leq C$, by Theorem \ref{theo:max_reg1}. Again, $C$ only depends on $R$.
By the Leray-Schauder Theorem \ref{thm:Leray-schauder}, there exists a classical solution to \eqref{eq:HJB-appendix}.

\noindent \textit{Step 2: Uniqueness.} Let $u_1$ and $u_2$ be two classical solutions to \eqref{eq:HJB-appendix}. Then $u_1$ and $u_2$ are viscosity solutions to \eqref{eq:HJB-appendix}.
Since the viscosity solution is unique, $u_1=u_2$.

\noindent \textit{Step 3: $\|u\|_{W^{2,1,p}(Q)} + \|\nabla u\|_{W^{2,1,p}(Q)} \leq C$.}
We have already obtained a bound on $\| u \|_{W^{2,1,p}(Q)}$ in Step 1.
It remains to show that $\|\nabla u\|_{W^{2,1,p}(Q)} \leq C$.
Let $i \in \left\{1,\ldots,d \right\}$. We have that $u^i \coloneqq \partial_{x_i} u$ is the solution to the following equation
\begin{equation}  \nonumber
\begin{array}{rl}
- \partial_t  u^i  - \Delta u^i + \partial_{x_i} \bm{H}[ \nabla u + A^\star P] +  \bm{H}_p[ \nabla u + A^\star P] \cdot (\nabla u^i + \partial_{x_i}A^\star P)  = & \partial_{x_i} \gamma, \\
 u^i(T)  = &  \partial_{x_i} g,
\end{array}
\end{equation}
for any $(x,t) \in Q$. By Lemma \ref{lemma:reg_H}, $\partial_{x_i} \bm{H}$ and $\bm{H}_p$ are continuous, thus 
\begin{equation} \nonumber
 \| \partial_{x_i} \bm{H}[\nabla u + A^\star P]\|_{L^\infty(Q)} \leq C, \quad \| \bm{H}_p[\nabla u + A^\star P]\|_{L^\infty(Q;\mathbb{R}^d)} \leq C,
\end{equation}
since $\|\nabla u\|_{\mathcal{C}^\alpha(Q;\mathbb{R}^d)} \leq C$ and $\|A^\star P\|_{L^\infty(Q;\mathbb{R}^d)} \leq C$.
By Assumption \eqref{ass_holder}, $\partial_{x_i} a$ is continuous, therefore $\|\partial_{x_i} A^\star P \|_{L^\infty(Q;\mathbb{R}^d)} \leq C$. We further have $\|\nabla \gamma_k\|_{L^\infty(Q;\mathbb{R}^d)} \leq C$ and $ \|\partial_{x_i} g\|_{L^\infty(\mathbb{T}^d)} \leq C$, by Assumption \eqref{ass_init_cond}. It follows that $u^i_k$ is the solution of a parabolic PDE with bounded coefficients, thus by Theorem \ref{theo:max_reg1}, $\|u^i\|_{W^{2,1,p}(Q)} \leq C$ and the Step 3 is proved which concludes the proof.
\end{proof}

\bibliographystyle{plain}
\bibliography{biblio}

\begin{thebibliography}{10}

\bibitem{achdou2014partial}
Yves Achdou, Francisco~J Buera, Jean-Michel Lasry, Pierre-Louis Lions, and
  Benjamin Moll.
\newblock Partial differential equation models in macroeconomics.
\newblock {\em Philosophical Transactions of the Royal Society A: Mathematical,
  Physical and Engineering Sciences}, 372(2028):20130397, 2014.

\bibitem{kobeissi2020meanfinite}
Yves Achdou and Ziad Kobeissi.
\newblock Mean field games of controls: Finite difference approximations, 2020.

\bibitem{achdou2020mean}
Yves Achdou and Mathieu Lauri{\`e}re.
\newblock Mean field games and applications: Numerical aspects.
\newblock {\em arXiv preprint arXiv:2003.04444}, 2020.

\bibitem{alasseur2020extended}
Cl{\'e}mence Alasseur, Imen~Ben Tahar, and Anis Matoussi.
\newblock An extended mean field game for storage in smart grids.
\newblock {\em Journal of Optimization Theory and Applications},
  184(2):644--670, 2020.

\bibitem{bauso2016opinion}
Dario Bauso, Hamidou Tembine, and Tamer Basar.
\newblock Opinion dynamics in social networks through mean-field games.
\newblock {\em SIAM Journal on Control and Optimization}, 54(6):3225--3257,
  2016.

\bibitem{Benamou2015}
Jean-David Benamou and Guillaume Carlier.
\newblock Augmented {L}agrangian methods for transport optimization, mean field
  games and degenerate elliptic equations.
\newblock {\em Journal of Optimization Theory and Applications}, 167(1):1--26,
  Oct 2015.

\bibitem{benamou2019entropy}
Jean-David Benamou, Guillaume Carlier, Simone Di~Marino, and Luca Nenna.
\newblock An entropy minimization approach to second-order variational
  mean-field games.
\newblock {\em Mathematical Models and Methods in Applied Sciences},
  29(08):1553--1583, 2019.

\bibitem{benamou2017variational}
Jean-David Benamou, Guillaume Carlier, and Filippo Santambrogio.
\newblock Variational mean field games.
\newblock In {\em Active Particles, Volume 1}, pages 141--171. Springer, 2017.

\bibitem{BHP-schauder}
J.~Fr{\'e}d{\'e}ric Bonnans, Saeed Hadikhanloo, and Laurent Pfeiffer.
\newblock Schauder estimates for a class of potential mean field games of
  controls.
\newblock {\em Applied Mathematics \& Optimization}, 83:1431--1464, 2021.

\bibitem{bonnans2021discrete}
J.~Fr{\'e}d{\'e}ric Bonnans, Pierre Lavigne, and Laurent Pfeiffer.
\newblock Discrete potential mean field games.
\newblock 2021.

\bibitem{bredies2009generalized}
Kristian Bredies, Dirk~A Lorenz, and Peter Maass.
\newblock A generalized conditional gradient method and its connection to an
  iterative shrinkage method.
\newblock {\em Computational Optimization and Applications}, 42(2):173--193,
  2009.

\bibitem{briceno2019implementation}
Luis Brice{\~n}o-Arias, Dante Kalise, Ziad Kobeissi, Mathieu Lauri{\`e}re,
  A.~Mateos Gonz{\'a}lez, and Francisco~J. Silva.
\newblock On the implementation of a primal-dual algorithm for second order
  time-dependent mean field games with local couplings.
\newblock {\em ESAIM: Proceedings and Surveys}, 65:330--348, 2019.

\bibitem{briceno2018proximal}
Luis~M. Briceno-Arias, Dante Kalise, and Francisco~J. Silva.
\newblock Proximal methods for stationary mean field games with local
  couplings.
\newblock {\em SIAM Journal on Control and Optimization}, 56(2):801--836, 2018.

\bibitem{brown1951iterative}
George~W. Brown.
\newblock Iterative solution of games by fictitious play.
\newblock {\em Activity analysis of production and allocation}, 13(1):374--376,
  1951.

\bibitem{cardaliaguet2015second}
Pierre Cardaliaguet, P.~Jameson Graber, Alessio Porretta, and Daniela Tonon.
\newblock Second order mean field games with degenerate diffusion and local
  coupling.
\newblock {\em Nonlinear Differential Equations and Applications NoDEA},
  22(5):1287--1317, 2015.

\bibitem{cardaliaguet2017learning}
Pierre Cardaliaguet and Saeed Hadikhanloo.
\newblock Learning in mean field games: the fictitious play.
\newblock {\em ESAIM: Control, Optimisation and Calculus of Variations},
  23(2):569--591, 2017.

\bibitem{cardaliaguet2016mean}
Pierre Cardaliaguet and Charles-Albert Lehalle.
\newblock Mean field game of controls and an application to trade crowding.
\newblock {\em Mathematics and Financial Economics}, 12(3):335--363, 2018.

\bibitem{cardaliaguet2016first}
Pierre Cardaliaguet, Alp{\'a}r~R. M{\'e}sz{\'a}ros, and Filippo Santambrogio.
\newblock First order mean field games with density constraints: pressure
  equals price.
\newblock {\em SIAM Journal on Control and Optimization}, 54(5):2672--2709,
  2016.

\bibitem{carmona2017mean}
Ren{\'e} Carmona, Fran{\c{c}}ois Delarue, and Daniel Lacker.
\newblock Mean field games of timing and models for bank runs.
\newblock {\em Applied Mathematics \& Optimization}, 76(1):217--260, 2017.

\bibitem{carmona2015probabilistic}
Ren{\'e} Carmona and Daniel Lacker.
\newblock A probabilistic weak formulation of mean field games and
  applications.
\newblock {\em The Annals of Applied Probability}, 25(3):1189--1231, 2015.

\bibitem{couillet2012electrical}
Romain Couillet, Samir~M. Perlaza, Hamidou Tembine, and M{\'e}rouane Debbah.
\newblock Electrical vehicles in the smart grid: A mean field game analysis.
\newblock {\em IEEE Journal on Selected Areas in Communications},
  30(6):1086--1096, 2012.

\bibitem{cui2021approximately}
Kai Cui and Heinz Koeppl.
\newblock Approximately solving mean field games via entropy-regularized deep
  reinforcement learning.
\newblock In {\em International Conference on Artificial Intelligence and
  Statistics}, pages 1909--1917. PMLR, 2021.

\bibitem{delarue2021exploration}
Fran{\c{c}}ois Delarue and Athanasios Vasileiadis.
\newblock Exploration noise for learning linear-quadratic mean field games.
\newblock {\em arXiv preprint arXiv:2107.00839}, 2021.

\bibitem{doncel2020mean}
Josu Doncel, Nicolas Gast, and Bruno Gaujal.
\newblock A mean field game analysis of {SIR} dynamics with vaccination.
\newblock {\em Probability in the Engineering and Informational Sciences},
  pages 1--18, 2020.

\bibitem{dunn1978conditional}
Joseph~C. Dunn and S.~Harshbarger.
\newblock Conditional gradient algorithms with open loop step size rules.
\newblock {\em Journal of Mathematical Analysis and Applications},
  62(2):432--444, 1978.

\bibitem{elie2020contact}
Romuald Elie, Emma Hubert, and Gabriel Turinici.
\newblock Contact rate epidemic control of covid-19: an equilibrium view.
\newblock {\em Mathematical Modelling of Natural Phenomena}, 15:35, 2020.

\bibitem{elie2019approximate}
Romuald Elie, Julien P{\'e}rolat, Mathieu Lauri{\`e}re, Matthieu Geist, and
  Olivier Pietquin.
\newblock Approximate fictitious play for mean field games.
\newblock {\em arXiv preprint arXiv:1907.02633}, 2019.

\bibitem{feron2020price}
Olivier F{\'e}ron, Peter Tankov, and Laura Tinsi.
\newblock Price formation and optimal trading in intraday electricity markets.
\newblock {\em arXiv preprint arXiv:2009.04786}, 2020.

\bibitem{frank1956algorithm}
Marguerite Frank and Philip Wolfe.
\newblock An algorithm for quadratic programming.
\newblock {\em Naval research logistics quarterly}, 3(1-2):95--110, 1956.

\bibitem{fudenberg1998theory}
Drew Fudenberg and David~K Levine.
\newblock {\em The theory of learning in games}, volume~2.
\newblock MIT press, 1998.

\bibitem{geist2021concave}
Matthieu Geist, Julien P{\'e}rolat, Mathieu Lauri{\`e}re, Romuald Elie, Sarah
  Perrin, Olivier Bachem, R{\'e}mi Munos, and Olivier Pietquin.
\newblock Concave utility reinforcement learning: the mean-field game
  viewpoint.
\newblock {\em arXiv preprint arXiv:2106.03787}, 2021.

\bibitem{gilbarg2015elliptic}
David Gilbarg and Neil~S. Trudinger.
\newblock {\em Elliptic partial differential equations of second order}.
\newblock springer, 2015.

\bibitem{graber2015existence}
P.~Jameson Graber and Alain Bensoussan.
\newblock Existence and uniqueness of solutions for {B}ertrand and {C}ournot
  mean field games.
\newblock {\em Applied Mathematics \& Optimization}, pages 1--25, 2015.

\bibitem{graber2020nonlocal}
P.~Jameson Graber, Vincenzo Ignazio, and Ariel Neufeld.
\newblock Nonlocal bertrand and cournot mean field games with general nonlinear
  demand schedule.
\newblock {\em Journal de Math{\'e}matiques Pures et Appliqu{\'e}es},
  148:150--198, 2021.

\bibitem{graber2018variational}
P.~Jameson Graber and Charafeddine Mouzouni.
\newblock Variational mean field games for market competition.
\newblock In {\em PDE models for multi-agent phenomena}, pages 93--114.
  Springer, 2018.

\bibitem{graber2020mean}
P.~Jameson Graber and Charafeddine Mouzouni.
\newblock On mean field games models for exhaustible commodities trade.
\newblock {\em ESAIM: Control, Optimisation and Calculus of Variations}, 26:11,
  2020.

\bibitem{graber2020weak}
P~Jameson Graber, Alan Mullenix, and Laurent Pfeiffer.
\newblock Weak solutions for potential mean field games of controls.
\newblock {\em Nonlinear Differential Equations and Applications NoDEA},
  28(5):1--34, 2021.

\bibitem{gueant2011mean}
Olivier Gu{\'e}ant, Jean-Michel Lasry, and Pierre-Louis Lions.
\newblock Mean field games and applications.
\newblock In {\em Paris-Princeton lectures on mathematical finance 2010}, pages
  205--266. Springer, 2011.

\bibitem{hadikhanloo2017learning}
Saeed Hadikhanloo.
\newblock Learning in anonymous nonatomic games with applications to
  first-order mean field games.
\newblock {\em arXiv preprint arXiv:1704.00378}, 2017.

\bibitem{HADIKHANLOO2019369}
Saeed Hadikhanloo and Francisco~J. Silva.
\newblock Finite mean field games: Fictitious play and convergence to a first
  order continuous mean field game.
\newblock {\em Journal de Mathématiques Pures et Appliquées}, 132:369 -- 397,
  2019.

\bibitem{HCMieeeAC06}
Minyi Huang, Roland~P. Malham{\'e}, and Peter~E. Caines.
\newblock Large population stochastic dynamic games: closed-loop
  {M}c{K}ean-{V}lasov systems and the {N}ash certainty equivalence principle.
\newblock {\em Communications in Information \& Systems}, 6(3):221--252, 2006.

\bibitem{jaggi2013revisiting}
Martin Jaggi.
\newblock Revisiting {F}rank-{W}olfe: Projection-free sparse convex
  optimization.
\newblock In {\em International Conference on Machine Learning}, pages
  427--435. PMLR, 2013.

\bibitem{kerdreux2020accelerating}
Thomas Kerdreux.
\newblock {\em Accelerating conditional gradient methods}.
\newblock PhD thesis, Universit{\'e} Paris sciences et lettres, 2020.

\bibitem{lachapelle2016efficiency}
Aim{\'e} Lachapelle, Jean-Michel Lasry, Charles-Albert Lehalle, and
  Pierre-Louis Lions.
\newblock Efficiency of the price formation process in presence of high
  frequency participants: a mean field game analysis.
\newblock {\em Mathematics and Financial Economics}, 10(3):223--262, 2016.

\bibitem{lachapelle2011mean}
Aim{\'e} Lachapelle and Marie-Therese Wolfram.
\newblock On a mean field game approach modeling congestion and aversion in
  pedestrian crowds.
\newblock {\em Transportation research part B: methodological},
  45(10):1572--1589, 2011.

\bibitem{LSU}
Olga~Aleksandrovna Ladyzhenskaia, Vsevolod~Alekseevich Solonnikov, and Nina~N
  Ural'tseva.
\newblock {\em Linear and quasi-linear equations of parabolic type}, volume~23.
\newblock American Mathematical Soc., 1988.

\bibitem{LL06cr1}
Jean-Michel Lasry and Pierre-Louis Lions.
\newblock Jeux {\`a} champ moyen. i--le cas stationnaire.
\newblock {\em Comptes Rendus Math{\'e}matique}, 343(9):619--625, 2006.

\bibitem{LL06cr2}
Jean-Michel Lasry and Pierre-Louis Lions.
\newblock Jeux {\`a} champ moyen. ii--horizon fini et contr{\^o}le optimal.
\newblock {\em Comptes Rendus Math{\'e}matique}, 343(10):679--684, 2006.

\bibitem{LL07mf}
Jean-Michel Lasry and Pierre-Louis Lions.
\newblock Mean field games.
\newblock {\em Japanese journal of mathematics}, 2(1):229--260, 2007.

\bibitem{lions1971optimal}
Jacques-Louis Lions.
\newblock {\em Optimal control of systems governed by partial differential
  equations}, volume 170.
\newblock Springer Verlag, 1971.

\bibitem{meszaros2015variational}
Alp{\'a}r~Rich{\'a}rd M{\'e}sz{\'a}ros and Francisco~J. Silva.
\newblock A variational approach to second order mean field games with density
  constraints: the stationary case.
\newblock {\em Journal de Math{\'e}matiques Pures et Appliqu{\'e}es},
  104(6):1135--1159, 2015.

\bibitem{monderer1996potential}
Dov Monderer and Lloyd~S Shapley.
\newblock Potential games.
\newblock {\em Games and economic behavior}, 14(1):124--143, 1996.

\bibitem{perolat2021scaling}
Julien Perolat, Sarah Perrin, Romuald Elie, Mathieu Lauri{\`e}re, Georgios
  Piliouras, Matthieu Geist, Karl Tuyls, and Olivier Pietquin.
\newblock Scaling up mean field games with online mirror descent.
\newblock {\em arXiv preprint arXiv:2103.00623}, 2021.

\bibitem{perrin2021mean}
Sarah Perrin, Mathieu Lauri{\`e}re, Julien P{\'e}rolat, Matthieu Geist, Romuald
  {\'E}lie, and Olivier Pietquin.
\newblock Mean field games flock! the reinforcement learning way.
\newblock {\em arXiv preprint arXiv:2105.07933}, 2021.

\bibitem{perrin2020fictitious}
Sarah Perrin, Julien P{\'e}rolat, Mathieu Lauri{\`e}re, Matthieu Geist, Romuald
  Elie, and Olivier Pietquin.
\newblock Fictitious play for mean field games: Continuous time analysis and
  applications.
\newblock {\em arXiv preprint arXiv:2007.03458}, 2020.

\bibitem{pieper2019linear}
Konstantin Pieper and Daniel Walter.
\newblock Linear convergence of accelerated conditional gradient algorithms in
  spaces of measures.
\newblock {\em arXiv preprint arXiv:1904.09218}, 2019.

\bibitem{prosinski2017global}
Adam Prosinski and Filippo Santambrogio.
\newblock Global-in-time regularity via duality for congestion-penalized mean
  field games.
\newblock {\em Stochastics}, 89(6-7):923--942, 2017.

\bibitem{rakotomamonjy2015generalized}
Alain Rakotomamonjy, R{\'e}mi Flamary, and Nicolas Courty.
\newblock Generalized conditional gradient: analysis of convergence and
  applications.
\newblock {\em arXiv preprint arXiv:1510.06567}, 2015.

\bibitem{robinson1951iterative}
Julia Robinson.
\newblock An iterative method of solving a game.
\newblock {\em Annals of mathematics}, pages 296--301, 1951.

\bibitem{xu2017convergence}
Hong-Kun Xu.
\newblock Convergence analysis of the {F}rank-{W}olfe algorithm and its
  generalization in {B}anach spaces.
\newblock {\em arXiv preprint arXiv:1710.07367}, 2017.

\end{thebibliography}

\end{document}